\documentclass[11pt, leqno]{amsart}

\usepackage{amsmath}
\usepackage{amssymb,amsfonts}
\usepackage{cancel}
\usepackage{comment}
\usepackage{enumerate}
\usepackage{enumitem}
\usepackage{geometry}
\usepackage{hyperref}
\usepackage{a4wide}
\usepackage{mathrsfs}
\usepackage[dvipsnames]{xcolor}
\usepackage[all,arc]{xy}

\newtheorem{thm}{Theorem}[section]
\newtheorem{cor}[thm]{Corollary}
\newtheorem{prop}[thm]{Proposition}
\newtheorem{lem}[thm]{Lemma}

\theoremstyle{definition}
\newtheorem{defn}[thm]{Definition}

\theoremstyle{remark}
\newtheorem{rem}[thm]{Remark}

\makeatletter
\let\c@equation\c@thm
\makeatother
\numberwithin{equation}{section}

\title{Classification of quadratically pinched translators in higher codimension}

\thanks{The authors are partially supported by INdAM-GNSAGA}

\subjclass[2020]{53C42, 53C21}

\keywords{Translators, high codimension, quadratic pinching}

\author[Debora Impera]{Debora Impera}
\address[Debora Impera]{Dipartimento di Scienze Matematiche ``Giuseppe Luigi Lagrange", Politecnico di Torino, Corso Duca degli Abruzzi, 24, Torino, Italy, I-10129}
\email{debora.impera@polito.it}

\author[Michele Rimoldi]{Michele Rimoldi}
\address[Michele Rimoldi]{Dipartimento di Scienze Matematiche ``Giuseppe Luigi Lagrange", Politecnico di Torino, Corso Duca degli Abruzzi, 24, Torino, Italy, I-10129}
\email{michele.rimoldi@polito.it}

\author[Francesco Ruatta]{Francesco Ruatta}
\address[Francesco Ruatta]{Dipartimento di Scienze Matematiche ``Giuseppe Luigi Lagrange", Politecnico di Torino, Corso Duca degli Abruzzi, 24, Torino, Italy, I-10129}
\email{francesco.ruatta@polito.it}

\hypersetup{
    colorlinks=true,
    linkcolor=RawSienna,
    citecolor=RawSienna,
    filecolor=magenta,      
    urlcolor=cyan,
    pdftitle={Classification of quadratically pinched translators in higher codimension},
    pdfpagemode=FullScreen,
}

\allowdisplaybreaks

\begin{document}

\begin{abstract}\label{abstract}
We study complete translating solitons for the mean curvature flow in arbitrary codimension under quadratic pinching of the second fundamental form. We prove a codimension reduction theorem at the Andrews-Baker threshold: if a complete $n$-dimensional connected translator isometrically immersed in $(\mathbb{R}^{n+p},\langle\cdot,\cdot\rangle)$ satisfies
\[
|B|^2\leq \left(\frac{4}{3n}-\varepsilon_{0}\right)|H|^2
\]
for some $\varepsilon_{0}>0$, then either it is an affine $n$-plane or $|H|>0$ everywhere and the translator is contained in an $(n+1)$-dimensional affine subspace. In particular, this improves in low dimensions the pinching range previously obtained from codimension reduction results for ancient mean curvature flows. Our proof is purely elliptic: viewing translators as weighted minimal submanifolds, we combine Simons-type identities for the drift Laplacian with refined gradient estimates and an Omori-Yau maximum principle. As a consequence, under the stronger pinching condition
\[
|B|^2\leq (c_n-\varepsilon_{0})|H|^2,\qquad
c_n=\min\left\{\frac{4}{3n},\frac{1}{n-2}\right\}
\]
for $n\geq3$, with $c_n=2/3$ when $n=2$, we obtain a rigidity theorem: the translator is either an affine plane or a bowl soliton contained in an $(n+1)$-dimensional affine subspace. No entropy assumption is required.
\end{abstract}

\maketitle


\section{Introduction}

Given $n\geq 2$ and $p\geq 1$, let $x_{0}:M^{n}\to\left(\mathbb R^{n+p},\langle\cdot,\cdot\rangle\right)$ be a smooth
isometric immersion of a connected $n$-dimensional Riemannian manifold without boundary as a
complete submanifold of the $(n+p)$-dimensional Euclidean space. A one-parameter family
$\{x(\cdot,t)\}_{t\in[t_0,T)}$ of smooth immersions, with images $M_t\doteq x(\cdot,t)(M)$, is a
solution of the \textit{mean curvature flow} (MCF) starting from $M_{t_0}$ if
\begin{equation}\label{eq:MCF}
    \begin{cases}
        \partial_t x(\cdot,t)=\mathbf H(\cdot,t),\\
        x(\cdot,t_0)=x_0,
    \end{cases}
\end{equation}
where $\mathbf H(\cdot,t)$ denotes the mean curvature vector of $M_t$ at $x(\cdot,t)$, defined as the
trace of the second fundamental form $B$ with respect to the induced metric $g$.

A solution of \eqref{eq:MCF} is called ancient if $t_0=-\infty$, while it is called eternal if it is defined for every time $t\in(-\infty,+\infty)$. The simplest non-static examples of eternal solutions are the translating solitons. They evolve by ambient translations, and they appear in the singularity theory of the flow as blow-up limits at type II singularities;
see, for example, \cite{Hamilton_HarnackEstimateMCF,HuiskenSinestrari_ConvexityEstimates}. Explicitly, given a non-zero vector $\vec v\in\mathbb R^{n+p}$, a smooth
isometric immersion $x:M^n\to\mathbb R^{n+p}$ is a \textit{translator} with velocity $\vec v$ if it
satisfies the system
\begin{equation}\label{eq:translator}
    \mathbf H=\vec v^{\perp},
\end{equation}
where $(\cdot)^{\perp}$ is the projection onto the normal bundle $NM$ of $M$. The scaling $x\to |\vec v|x$ maps a translator with velocity $\vec v$ to one with velocity $\vec v/|\vec v|$ and it leaves \eqref{eq:pinching_definition} introduced below invariant, so, without loss of generality, we will always assume that $|\vec v|=1$. The smooth family of immersions $x:M^n\times\mathbb R\to\mathbb R^{n+p}$ given by $x(\cdot,t)\doteq x_0(\cdot)+t\vec v$ is, up to tangential reparametrizations, a solution of \eqref{eq:MCF}.

Equivalently, as proved in Section \ref{sec:translators}, translators are exactly the $\rho$-minimal immersions for the weight $\rho(\cdot)=-\langle \vec v,\cdot\rangle$.
The associated operator is the drift Laplacian,
\begin{equation*}
    \Delta_\rho \varphi\doteq\Delta \varphi-\langle\nabla\rho,\nabla \varphi\rangle=\Delta \varphi+\langle \vec v^{\top},\nabla \varphi\rangle.
\end{equation*}

In codimension one, translators are well understood under convexity assumptions. Precisely, up to rigid motions, there exists a unique rotationally symmetric and strictly convex translator (see, e.g., \cite{AltschulerWu_TranslatingSurfaces}). For $n=1$ it is the grim reaper curve \cite{Mullins_GrainBoundaries}, while for $n\geq 2$ it is called the \emph{bowl soliton}. Wang \cite{Wang_ConvexSolutionsMCF} proved that, when $n=2$, every entire convex translator is the bowl soliton. More generally, Spruck and Xiao \cite{SpruckXiao} proved that every complete, two sided, mean convex translator in $\mathbb{R}^{3}$ is convex. In greater dimensions, Haslhofer \cite[Theorem 1.2]{Haslhofer_BowlSoliton} proved that every uniformly $2$-convex and $\alpha$-noncollapsed translator is the bowl soliton. The latter condition can be guaranteed, for complete, orientable and uniformly $2$-convex translators, by combining \cite[Theorem 1.1]{SpruckSun_ConvexityTranslatingSolitons}, Sacksteder's theorem \cite{Sacksteder_HypersurfacesNoNegSectionalCurvatures} and \cite[Theorem 1.1]{BourniLangfordLynch_CollapsingAndNoncollapsingAncientMCF}, as in the proof of Theorem \ref{thm:rigidity_result}. 

In higher codimension, a natural substitute for convexity is provided by the so-called
\textit{quadratic pinching} condition. A submanifold is said to be \emph{$c$-quadratically pinched}
if it satisfies the pointwise inequality
\begin{equation}\label{eq:pinching_definition}
    |B|^{2}\leq c |\mathbf H|^{2},
\end{equation}
and \emph{uniformly $c$-quadratically pinched} if $|B|^{2}\leq(c-\varepsilon_0)|\mathbf H|^{2}$ for
some $\varepsilon_0>0$. This condition plays a fundamental role mostly for two reasons. First, the pinching condition $|B|^2+a\leq c|\mathbf H|^2$, for some $a>0$, is preserved along the flow when $c\leq\frac{4}{3n}$, as shown by Andrews and Baker in \cite[Theorem 2]{AndrewsBaker_MCFPinchedSubmanifolds}. Second, in codimension $p=1$, convexity properties are reflected by suitable choices of the pinching constant $c$. Specifically, every uniformly $\frac{1}{n-k}$-quadratically pinched hypersurface (with $1 \leq k \leq n-1$) is uniformly $k$-convex; see \cite[Lemma 5.1]{HuiskenSinestrari_MCFConvexAncientSolutions}.

An expected phenomenon under quadratic pinching is \emph{codimension reduction}. For $n\geq 5$ and $p>1$, this was established by Naff \cite{Naff_Planarity} for blow-up limits of compact pinched flows satisfying $|B|^2 < c_n^{\mathrm{Naff}} |\mathbf{H}|^2$.
It was later extended by Lee, Naff, and Zhu \cite{LeeNaffZhu_Planarity} to uniformly pinched ancient solutions under the same pinching constant:
\begin{equation*}
    c_n^{\mathrm{Naff}}=\min\left\{\frac{3(n+1)}{2n(n+2)}, \frac{4}{3n}\right\}
    =\begin{cases}
    \dfrac{3(n+1)}{2n(n+2)}, & n\leq 7,\\[6pt]
    \dfrac{4}{3n}, & n\geq 8.
    \end{cases}
\end{equation*}

Since a uniformly $c_n^{\mathrm{Naff}}$-pinched translator generates an eternal solution of the flow with bounded curvature, \cite[Theorem 1.1]{LeeNaffZhu_Planarity} applied on $[0,T]$, with $T\to+\infty$, shows that the component of $B$ orthogonal to $\mathbf H$ vanishes unless $B\equiv0$. Codimension reduction follows from \cite[Proposition 2.5]{Naff_Planarity}. The constant $c_n^{\mathrm{Naff}}$ is, however, strictly smaller than the Andrews--Baker threshold $\frac{4}{3n}$ exactly when $n\leq 6$, with equality for $n\geq 7$. This gap has already been shown, in the self-shrinker setting, to be due to technical aspects of the approach in \cite{Naff_Planarity,LeeNaffZhu_Planarity}; see \cite{IRR_sshc_classification}. 
The goal of this paper is to prove that the same holds true for translators, that is, to prove codimension
reduction up to the uniform $\frac{4}{3n}$-pinching in \emph{every} dimension. Our
main result is the following.

\begin{thm}[Codimension reduction]\label{thm:main_codim_reduction}
    Given $n\geq 2$ and $p\geq 1$, let $x:M^{n}\to\mathbb R^{n+p}$ be a complete, connected translator
    without boundary, satisfying $\mathbf H=\vec v^{\perp}$ with $|\vec v|=1$. Furthermore, assume that
    \begin{equation}\label{eq:main_pinching}
        |B|^{2}\leq\left(\frac{4}{3n}-\varepsilon_0\right)|\mathbf H|^{2}
        \qquad\text{on }M,
    \end{equation}
    for some $\varepsilon_0>0$. Then one of the following holds:
    \begin{enumerate}
        \item $B\equiv 0$ and $x$ is an isometry onto an $n$-dimensional plane containing $\vec v$;
        \item $|\mathbf H|>0$ everywhere; the component $B_I$ of the second fundamental form orthogonal to the principal normal $\nu\doteq \mathbf H/|\mathbf H|$ vanishes, $\nu$ is parallel in the normal bundle and $x(M)$ is contained in the affine subspace $A\doteq x(q_0)+W\subset\mathbb{R}^{n+p}$, for some $q_0\in M$ and some $(n+1)$-dimensional linear subspace $W\subseteq \mathbb R^{n+p}$. Moreover, $\vec v\in W$, the normal space of $x$ in $A$ is $\mathbb R\nu$, and $x:M\to A$ is an orientable codimension one translator with the same velocity $\vec v$.
    \end{enumerate}
\end{thm}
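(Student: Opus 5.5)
The plan is to work throughout with the elliptic structure: a translator is $\rho$-minimal for $\rho=-\langle\vec v,\cdot\rangle$, so the second fundamental form and $\mathbf H$ satisfy Simons-type identities for the drift Laplacian $\Delta_\rho$. Since $\mathbf H=\vec v^{\perp}$ and $|\vec v|=1$, we have $|\mathbf H|\le 1$. The pinching then gives $|B|^2\le 1$, so the curvature is bounded. Because $\mathrm{Ric}_\rho$ is bounded below (the drift term is $\nabla^2\rho=-\langle\vec v,B\rangle$, which is bounded), the Omori--Yau maximum principle for $\Delta_\rho$ is available on the complete manifold $M$.

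\emph{Step 1 (dichotomy).} First compute $\Delta_\rho\mathbf H=-\langle \mathbf H,B_{ij}\rangle B_{ij}$ in normal-bundle form, and hence
\[
\tfrac12\Delta_\rho|\mathbf H|^2=|\nabla^\perp\mathbf H|^2-\sum_{i,j}\langle\mathbf H,B_{ij}\rangle^2 .
\]
By the pinching, the zero set of $\mathbf H$ is exactly the zero set of $B$. Suppose $\mathbf H$ vanishes at some point but not identically. I would run a strong-maximum-principle or unique-continuation argument on the elliptic system for $B$. Failing that, an analytic route is available: translators are real analytic, so if $\mathbf H$ vanished on an open set then $B\equiv0$ there, hence everywhere, giving alternative (1). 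Here $B\equiv0$ forces $\vec v^\perp=0$, so $\vec v$ is tangent and the image is a plane containing $\vec v$. An isolated zero is excluded by the pinching $|B|^2\le c|\mathbf H|^2$ with $c<4/(3n)$, using a local argument on $|B|^2/|\mathbf H|^2$ near the zero. This gives $|\mathbf H|>0$ everywhere in the nonflat case.

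\emph{Step 2 (vanishing of $B_I$).} With $\nu=\mathbf H/|\mathbf H|$, split $B=B_{\nu}\nu+B_I$. Following the Naff / Andrews--Baker computation, but elliptically, consider
\[
f=\frac{|B_I|^2}{|\mathbf H|^2}\quad\text{(or }f=\frac{|B_I|^2+|\hat A|^2 g}{|\mathbf H|^2}\text{-type combinations).}
\]
Derive $\Delta_\rho f\ge -2\langle\nabla\log|\mathbf H|^2,\nabla f\rangle+\delta f\,|\mathbf H|^2$ plus gradient terms. The main obstacle lies here: controlling the bad reaction terms and the gradient terms $|\nabla^\perp\nu|^2$ down to the full threshold $4/(3n)$ rather than $c_n^{\mathrm{Naff}}$. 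I would use refined Kato/Codazzi gradient estimates such as $|\nabla B|^2\ge\frac{3}{n+2}|\nabla\mathbf H|^2$ and its normal-direction splitting. The uniform $\varepsilon_0$ would absorb the cross terms, mirroring the self-shrinker argument in the earlier work. Since $|\mathbf H|$ may tend to $0$ at infinity, I would apply Omori--Yau not to $f$ directly but to $f\,|\mathbf H|^{2\alpha}$, or to $f$ with the weight $|\mathbf H|^2$, choosing the exponent so that the limiting inequality along an Omori--Yau sequence forces $\sup f=0$.

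\emph{Step 3 (codimension reduction).} Once $B_I\equiv0$, Codazzi gives $\nabla^\perp\nu=0$: indeed $\langle\nabla^\perp_k\nu,e_\alpha\rangle h_{ij}$ is symmetric in $i,k$, and $h\neq0$ with trace $|\mathbf H|>0$. The subspace $W=dx(T_qM)\oplus\mathbb R\nu$ is then constant, by the standard argument of Proposition 2.5 in Naff's planarity paper, and $x(M)\subset x(q_0)+W$. Finally, $\vec v=\vec v^\top+\langle\vec v,\nu\rangle\nu\in W$ because $\vec v^\perp=\mathbf H\parallel\nu$. The normal of $M$ in $A$ is $\mathbb R\nu$, which gives a global unit normal and hence orientability, and the translator equation is unchanged.
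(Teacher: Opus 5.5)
\textbf{Step 2 is the gap, and it carries the whole theorem.} As written it restates the obstacle rather than resolving it, and the mechanism you suggest is the one that fails. Consider the term $\langle\nabla\log|\mathbf H|,b\rangle$, with $b_k=\sum_{\beta\ge n+2}\operatorname{tr}(h^{n+1}h^\beta)\sigma^\beta_k$, which sits inside $\sum_{i,j,k}Q_{ijk}C_{ijk}$ in $\Delta_\rho|B_I|^2$. In the pointwise Lee--Naff--Zhu scheme it is absorbed by Young's inequality, and this closes only for pinching constants $c<\frac{3(n+1)}{2n(n+2)}$. So for $n\le 6$ and small $\varepsilon_0$, ``the uniform $\varepsilon_0$ absorbs the cross terms'' gives back $c_n^{\mathrm{Naff}}$, not $\frac{4}{3n}$.

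The missing idea is to give up pointwise nonnegativity of the gradient terms. The paper uses the pinching function $f=\frac{4}{3n}|\mathbf H|^2-|B|^2$ in the denominator, setting $u=|B_I|^2/f$ and $\omega=fu^{1/(1-\theta)}=|B_I|^{12}/f^5$ with $\theta=\frac56$. The reaction part then satisfies $\theta u\mathcal R_f-\mathcal R_I\ge\frac72|B_I|^4$. The gradient part is only shown to satisfy $\mathcal G_\zeta\ge-\widetilde C|B_I|^2|\zeta|^2$ on $\{u\ge\underline u\}$, where $\zeta=(1-\theta)\nabla\log\omega$. Getting even this needs four ingredients:
\begin{itemize}
\item rewriting $\nabla\log|\mathbf H|$ through the differentiated identity $f+|B_I|^2+|\mathring B_\nu|^2=\frac{1}{3n}|\mathbf H|^2$;
\item keeping the good term $\theta(1-\theta)|\nabla\log f|^2$;
\item splitting $E$ and $C$ along $\mathring h^{n+1}$;
\item an algebraic lemma whose hypotheses require $\theta\frac{5n-8}{n+2}\ge2$, hence $n\ge5$.
\end{itemize}

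Omori--Yau is then applied to the bounded function $\omega$. Since $\omega=fu^6$ with $u\le\overline u$ and $f\le\frac1{3n}$, an almost-maximizing sequence automatically has $f\ge f_*>0$ and $u\ge\underline u>0$. This settles your worry that $|\mathbf H|\to0$ at infinity. Since also $\zeta\to0$ along the sequence, one gets the contradiction $0\ge 7\underline u f_*$.

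Because this only closes for $n\ge5$, the cases $2\le n\le4$ need a separate argument, which your uniform scheme does not supply. There $\frac{4}{3n}\le\frac1{n-1}$, so the compactness criterion makes $M$ flat or compact: Lee--Naff--Zhu for $n\ge3$, White plus M\"uller--\v{S}ver\'ak for $n=2$. Closed translators do not exist, so $B\equiv0$.

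\textbf{Step 1 also has a gap.} Unique continuation only excludes zeros of infinite order, and analyticity only excludes zero sets with interior. The ``local argument'' at an isolated zero is not given, and zero sets such as curves or hypersurfaces in $M$ are not treated at all. You cannot use $|\mathbf H|^2$ with a strong minimum principle either: $\Delta_\rho|\mathbf H|^2=2|\nabla^\perp\mathbf H|^2-2|S_{\mathbf H}|^2$ only yields $\Delta_\rho|\mathbf H|^2\ge-2|B|^2|\mathbf H|^2$, which is the wrong direction. The right quantity is again $f$. It satisfies $f\ge\varepsilon_0|\mathbf H|^2\ge0$ and $\Delta_\rho f=-2(\mathcal R_f+\mathcal G_f)\le0$ on all of $M$. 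This uses the Andrews--Baker bound $\mathcal R_f\ge0$ and Huisken's $|\nabla^\perp B|^2\ge\frac{3}{n+2}|\nabla^\perp\mathbf H|^2$, with $\frac{3}{n+2}\ge\frac{4}{3n}$. The strong maximum principle then gives either $f\equiv0$, hence $B\equiv0$, or $f>0$, hence $|\mathbf H|>0$ everywhere.

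\textbf{Step 3 has a smaller gap.} Symmetry of $h^{n+1}_{ij}\sigma^\beta_k$ in $i,k$ forces $\sigma^\beta=0$ only when $h^{n+1}$ has rank at least two. Positive trace is not enough, since $h^{n+1}=\lambda\,\sigma^\beta\otimes\sigma^\beta$ with $\lambda|\sigma^\beta|^2=|\mathbf H|$ is compatible with the symmetry. Rank one is excluded by the pinching, since it would give $|B|^2=|\mathbf H|^2$. Alternatively, use the translator identity $|\mathbf H|\,|\nabla^\perp\nu|\le|B_I|\,|\vec v^\top|$ directly, as the paper does.
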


    For $n\geq 7$ one has $c_n^{\mathrm{Naff}}=\frac{4}{3n}$, and the claim follows from Lee, Naff and Zhu argument, as explained above. Instead, when $n\in\{3,4\}$, one has that $\frac{4}{3n}\leq \frac{1}{n-1}$, and the result is a consequence of the compactness criterion \cite[Proposition 4.1]{LeeNaffZhu_Planarity} together with the fact that there exist no closed translators. We extend this criterion to the case $n=2$ in Proposition \ref{prop:translator_flatness_dim_234_intro}. Therefore, the genuinely new cases are $n\in\{2,5,6\}$.

In the following, we keep the framework of \cite{Naff_Planarity,LeeNaffZhu_Planarity}, together with their reaction and gradient estimates. The differences lie in the fact that the evolution equations of \cite{Naff_Planarity} are replaced by Simons identities for the drift Laplacian $\Delta_\rho$ and in the treatment of the gradient terms. Indeed, in \cite[Appendix B.2]{LeeNaffZhu_Planarity} they are shown to be nonnegative pointwise, and this is where the constant $\frac{3(n+1)}{2n(n+2)}$ comes from. We only bound them from below by a term that vanishes at almost-maximum points, and we close the argument with the Omori--Yau maximum principle for the drift Laplacian, following, therefore, a purely elliptic approach in the spirit of \cite{IRR_sshc_classification}. The resulting proof, given in Section \ref{sec:codim_reduction_n_geq_5}, holds for every $n\geq5$.

Theorem \ref{thm:main_codim_reduction} reduces the problem to the hypersurface case, yielding rigidity under uniform $2$-convexity. Specifically, combining \cite{HuiskenSinestrari_MCFConvexAncientSolutions,SpruckSun_ConvexityTranslatingSolitons,Sacksteder_HypersurfacesNoNegSectionalCurvatures,BourniLangfordLynch_CollapsingAndNoncollapsingAncientMCF,Haslhofer_BowlSoliton}, we prove:

\begin{thm}[Rigidity result]\label{thm:rigidity_result}
    Let $x:M^n\to\mathbb R^{n+p}$ be a uniformly $c_n$-pinched translator, where
    \begin{equation*}
        c_n\doteq\begin{cases}
            \frac{4}{3n}\qquad &2\leq n\leq 8,\\
            \frac{1}{n-2}&n\geq 9,
        \end{cases}
    \end{equation*}
    that is, $c_n=\min\left\{\frac{4}{3n},\frac{1}{n-2}\right\}$ for $n\geq3$. Suppose $x$ satisfies all the other hypotheses of Theorem \ref{thm:main_codim_reduction}. Then either $x$ is an isometry onto an $n$-dimensional plane containing $\vec{v}$, or $x$ is an embedding whose image is a bowl soliton contained in an $(n+1)$-dimensional affine subspace of $\mathbb R^{n+p}$. In particular, for $n\in\{2,3,4\}$ only the first case is possible.
\end{thm}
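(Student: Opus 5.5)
Since $c_n\le \frac{4}{3n}$ for every $n\ge 2$, the hypotheses of Theorem \ref{thm:main_codim_reduction} hold, and I would start by applying it. If alternative (1) occurs, we are done. Otherwise we are in alternative (2): $|\mathbf H|>0$ everywhere and $x(M)$ lies in an $(n+1)$-dimensional affine subspace $A$ containing the direction $\vec v$. Moreover, $x:M\to A$ is an orientable codimension one translator with unit normal $\nu$ and mean curvature $H=|\mathbf H|>0$, so it is strictly mean convex. The second fundamental form of $x$ in $A$ coincides with $B$ (since $B_I=0$), so the pinching $|B|^2\le (c_n-\varepsilon_0)H^2$ continues to hold for the hypersurface. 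From here the argument is purely codimension one.

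For $n\ge 3$ I would use $c_n\le \frac{1}{n-2}$ and invoke \cite[Lemma 5.1]{HuiskenSinestrari_MCFConvexAncientSolutions} with $k=2$: a uniformly $\frac{1}{n-2}$-pinched hypersurface is uniformly $2$-convex, meaning $\lambda_1+\lambda_2\ge \beta H$ for some $\beta>0$. Next I would apply \cite[Theorem 1.1]{SpruckSun_ConvexityTranslatingSolitons}, which states that complete, orientable, uniformly $2$-convex translators are convex, so $B\ge0$. Completeness together with $H>0$ gives nonnegative sectional curvature that is not identically flat. Sacksteder's theorem \cite{Sacksteder_HypersurfacesNoNegSectionalCurvatures} then shows that $x$ is an embedding onto the boundary of a convex body. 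The resulting translating flow is an ancient, convex, uniformly $2$-convex solution, and \cite[Theorem 1.1]{BourniLangfordLynch_CollapsingAndNoncollapsingAncientMCF} should yield $\alpha$-noncollapsedness (convex ancient solutions that are uniformly $2$-convex, or more generally have the appropriate pinching, are noncollapsed). Finally, \cite[Theorem 1.2]{Haslhofer_BowlSoliton} identifies the hypersurface as the bowl soliton. The case $n=2$ with $c_2=2/3$ is simpler: the pinching $|B|^2<\frac{2}{3}H^2<H^2$ forces $\lambda_1\lambda_2>0$, and together with $H>0$ this gives strict convexity. I would then use Wang \cite{Wang_ConvexSolutionsMCF} (entire convex translators in $\mathbb R^3$ are bowls) after checking that the surface is an entire graph via Sacksteder, or run the same noncollapsing route.

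It remains to exclude the bowl for $n\in\{2,3,4\}$. Asymptotically, the bowl looks like $\mathbb R\times$ a shrinking sphere $S^{n-1}$ scaled, so $|B|^2/H^2\to \frac{1}{n-1}$ along the end. Uniform $c_n$-pinching would then require $\frac{1}{n-1}\le c_n-\varepsilon_0<\frac{4}{3n}$. However, $\frac{1}{n-1}\ge\frac{4}{3n}$ exactly when $3n\ge 4n-4$, that is, when $n\le 4$, which gives a contradiction. Hence only the plane occurs for $n\le 4$. The main obstacle I expect is verifying carefully the hypotheses of the noncollapsing result of Bourni--Langford--Lynch (convexity, uniform $2$-convexity of the associated eternal flow, and completeness/properness coming from Sacksteder's embeddedness), as well as the $n=2$ bookkeeping.
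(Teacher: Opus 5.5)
Your argument is correct, and for $n\geq5$ it is exactly the paper's chain: Huisken--Sinestrari Lemma 5.1, Spruck--Sun, Sacksteder, Bourni--Langford--Lynch, then Haslhofer.

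The genuine difference is in dimensions $2\le n\le4$. The paper never lets alternative (2) occur there. It invokes Corollary \ref{cor:codim_reduction_n_leq_4}, i.e.\ the compactness criterion of Proposition \ref{prop:translator_flatness_dim_234_intro} (Lee--Naff--Zhu/Lee--Topping for $n=3,4$, White and M\"uller--\v{S}ver\'ak for $n=2$), together with the nonexistence of closed translators. This gives $B\equiv0$ directly, because $\frac{4}{3n}\le\frac{1}{n-1}$.

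You instead use only the statement of Theorem \ref{thm:main_codim_reduction}, run the full classification in these dimensions too, reach the bowl, and exclude it because $|B|^2/|\mathbf H|^2\to\frac{1}{n-1}\ge\frac{4}{3n}$ along its end. This is valid. It treats all $n$ uniformly and gives a geometric reason for the low-dimensional rigidity: the same inequality $\frac{4}{3n}\le\frac{1}{n-1}$ is what makes the paper's compactness criterion applicable. The cost is that you need the full Spruck--Sun/Bourni--Langford--Lynch/Haslhofer machinery plus the bowl's asymptotics even when $n\le4$. The paper's route needs no classification there, and its proof in fact shows that alternative (2) never arises for $n\le4$.

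Two small corrections.

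First, for $n=2$ the Wang shortcut is not justified as written. Sacksteder together with $H=\langle\vec v,\nu\rangle>0$ only gives a graph over a convex domain, not an entire graph; the $\Delta$-wings are strictly convex translators over strips. Use your noncollapsing alternative instead. There uniform $2$-convexity is automatic for $n=2$, and Haslhofer's theorem still applies.

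Second, the non-flatness needed for Sacksteder's theorem does not follow from $H>0$ alone: grim reaper cylinders are convex, flat and have $H>0$. It follows from the strict convexity given by Spruck--Sun, or from uniform $2$-convexity with $n\ge3$.
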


Theorem \ref{thm:rigidity_result} is the translator counterpart of \cite[Corollary 1.5]{LeeNaffZhu_Planarity}. It requires no entropy bound and, for $n\in\{2,5,6\}$, it holds under uniform $c_n$-pinching instead of uniform $c_n^{\mathrm{Naff}}$-pinching, which gives a weaker pinching assumption in these dimensions.

The paper is organized as follows. In Section \ref{sec:preliminaries} we fix the notation and we recall the basic equations for submanifolds. Section \ref{sec:translators} describes translators as $\rho$-minimal immersions and proves the Omori--Yau maximum principle for $\Delta_\rho$. In Section \ref{sec:pinched_submanifolds} we introduce useful quantities and identities on quadratically pinched submanifolds. Section \ref{section_simons_equations} contains the Simons-type identities for $|\mathbf H|^2$, $|B|^2$, $f$, $|B_\nu|^2$ and $|B_I|^2$, and Section \ref{sec:estimates} collects the algebraic and gradient estimates of \cite{Naff_Planarity}. Section \ref{sec:codim_reduction_n_geq_5} proves the codimension reduction for $n\geq5$. Section \ref{sec:codim_reduction_low} states and proves the aforementioned compactness criterion, and treats the codimension reduction in the $2\leq n\leq4$ case. Lastly, Theorems \ref{thm:main_codim_reduction} and \ref{thm:rigidity_result} are proved in Section \ref{sec:proof_main_theorems}. Appendix \ref{sec:algebraic_lemma_5_6} contains an algebraic inequality used in Section \ref{sec:codim_reduction_n_geq_5}.

\subsection*{Added note} As we were making the final revisions to this manuscript, Lynch \cite{Lynch26} independently extended Naff's parabolic planarity estimate in dimensions $5$ and $6$ to the full Andrews-Baker pinching range for compact mean curvature flows, using a new sharp Kato-type inequality. This closes the corresponding gap in the compact parabolic setting. The present work concerns instead complete translators without boundary, which are necessarily noncompact, and proves codimension reduction by a direct elliptic argument based on drift-Laplacian identities and the Omori-Yau maximum principle. Thus, although both results reach the same critical pinching range, they apply to genuinely different geometric settings and rely on different mechanisms.

\subsection*{AI Disclosure}
ChatGPT and Claude were used only for stylistic and editorial revisions of the manuscript, as well as for a final check of the mathematical arguments. All mathematical content was independently verified by the authors, who take full responsibility for the results and proofs presented in the paper.

\section{Notation and preliminaries on submanifolds}\label{sec:preliminaries}

In this section we recall the basic definitions and equations for submanifolds of the Euclidean space, and we fix the notation used throughout the paper.

Let $x:M^n\to (\mathbb R^{n+p},\langle\cdot,\cdot\rangle)$ be a smooth immersion of an $n$-dimensional manifold $M^n$ into the $(n+p)$-dimensional Euclidean space, and let $g$ be the induced metric on $M$ via the pullback. We denote by $TM$ and $NM$, respectively, the tangent and normal bundles of $M$ in $\mathbb R^{n+p}$, and by $\Gamma( \cdot )$ the space of smooth sections of a bundle.
The $(1,3)$- and $(0,4)$-curvature tensors of $(M,g)$ are defined with the following sign conventions
\begin{align*}
    R(X,Y)Z&\doteq \nabla_X(\nabla_YZ)-\nabla_Y(\nabla_XZ)-\nabla_{[X,Y]}Z,
    \\ R(X,Y,Z,W)&\doteq \langle R(X,Y)W,Z\rangle.
\end{align*}
For tensor fields $\mathcal F\in\Gamma((T^*M)^{\otimes k})$ we set $\nabla^2_{X,Y}\mathcal F\doteq\nabla_X(\nabla_Y\mathcal F)-\nabla_{\nabla_XY}\mathcal F$ and denote by $\Delta\doteq\operatorname{tr}_g\nabla^2$ the rough Laplacian, which on functions is the Laplace--Beltrami operator.

We use the following ranges of indices:
\begin{equation*}
    1\leq i,j,k,\ldots\leq n,\qquad n+1\leq\alpha,\beta,\gamma,\ldots\leq n+p,
\end{equation*}
and we let $\{e_i\}_{i=1}^n$ and $\{\nu_\alpha\}_{\alpha=n+1}^{n+p}$ be local orthonormal frames of $TM$ and $NM$. On the open set $\{|\mathbf H|>0\}$ we always choose the adapted frame in which $\nu_{n+1}$ is the principal normal $\mathbf H/|\mathbf H|$, denoted simply by $\nu$. Sums over Latin indices range from $1$ to $n$, while sums over Greek indices from $n+1$ to $n+p$, unless otherwise stated.

The Ricci tensor of $(M,g)$ is obtained by tracing the $(0,4)$-curvature tensor as
\begin{equation}\label{eq:ricci_trace_convention}
    \operatorname{Ric}(X,Y)\doteq\sum_i R(X,e_i,Y,e_i).
\end{equation}

\subsection{Connections}\label{subsec:connections}

The computations of the following sections require carefully distinguishing among covariant derivatives acting on different bundles, in order to avoid overloading the symbol $\nabla$. For this purpose, we write $\overline\nabla$ for the flat connection of the ambient space $\mathbb R^{n+p}$, acting on vector fields along the immersion (i.e. on sections of $x^*T\mathbb R^{n+p}=TM\oplus NM$), and $\nabla$ for the Levi-Civita connection of $(M,g)$, acting on (tensor powers of) $TM$ and $T^*M$. We then set
\begin{equation}\label{eq:normal_connection_definition}
    \nabla^\perp_X\xi\doteq(\overline\nabla_X\xi)^\perp,\qquad X\in\Gamma(TM),\ \xi\in\Gamma(NM).
\end{equation}
Consistently with the above convention, the curvature of the normal connection is defined by
\begin{equation*}
    R^\perp(X,Y)\xi\doteq\nabla^\perp_X(\nabla^\perp_Y\xi)-\nabla^\perp_Y(\nabla^\perp_X\xi
    )-\nabla^\perp_{[X,Y]}\xi,\quad R^\perp(X,Y,\eta,\xi)\doteq\langle R^\perp(X,Y)\xi,\eta\rangle .
\end{equation*}

Throughout, $\overline\nabla_i\doteq\overline\nabla_{e_i}$, $\nabla_i\doteq\nabla_{e_i}$ and
$\nabla^\perp_i\doteq\nabla^\perp_{e_i}$. Furthermore we will use the symbol $\nabla^\perp$ not only to denote the normal connection \eqref{eq:normal_connection_definition}, but also for the connection it induces, together with $\nabla$, on bundles of the form $(T^*M)^{\otimes k}\otimes NM$. Equivalently, for $\mathcal T\in\Gamma((T^*M)^{\otimes k}\otimes NM)$ and $X,Y_1,\ldots,Y_k\in\Gamma(TM)$,
\begin{equation*}
    (\nabla^\perp_X\mathcal T)(Y_1,\ldots,Y_k)=\nabla^\perp_X(\mathcal T(Y_1,\ldots,Y_k))-\sum_{a=1}^k\mathcal T(Y_1,\ldots,\nabla_XY_a,\ldots,Y_k),
\end{equation*}
where on the right-hand side $\nabla^\perp$ is the connection \eqref{eq:normal_connection_definition} on $NM$. Since both $\nabla$ and $\nabla^\perp$ are compatible with the metric, so is the induced connection. That is $X\langle \mathcal T,\mathcal T'\rangle=\langle\nabla^\perp_X\mathcal T,\mathcal T'\rangle+\langle \mathcal T,\nabla^\perp_X\mathcal T'\rangle$ for $\mathcal T,\mathcal T'\in\Gamma((T^*M)^{\otimes k}\otimes NM)$. Here, $\langle\cdot,\cdot\rangle$ denotes the metric induced by $g$ and by the ambient metric on
$(T^*M)^{\otimes k}\otimes NM$. In particular, in the local orthonormal frames $\{e_i\}$ and $\{\nu_\alpha\}$,
the squared norm of the second fundamental form (to be defined in Subsection \ref{subsec:sff}) and of its gradient are
\begin{equation*}
    |B|^2=\sum_{i,j}|h_{ij}|^2=\sum_{i,j,\alpha}(h^\alpha_{ij})^2,
    \qquad
    \bigl|\nabla^\perp B\bigr|^2=\sum_{i,j,k,\alpha}(h^\alpha_{ijk})^2,
\end{equation*}
and analogously for $\mathbf H$, $\mathring B$, $B_\nu$, $B_I$ and their derivatives.

We set $\Delta^\perp\doteq \operatorname{tr}_g(\nabla^\perp)^2$ for the associated rough Laplacian. The symbols $\nabla^\perp B$, $\nabla^\perp\mathbf H$, $\Delta^\perp B$ and $\Delta^\perp\mathbf H$ are understood in this sense.

\subsection{The second fundamental form}\label{subsec:sff}

The \emph{second fundamental form} $B$ of $M$ is the normal part of the ambient covariant derivative,
\begin{equation*}
    B:\Gamma(TM)\times \Gamma(TM)\longrightarrow \Gamma(NM),\qquad B(X,Y)\doteq (\overline\nabla_XY)^\perp=\overline\nabla_XY-\nabla_XY,
\end{equation*}
so that $B\in\Gamma(\mathrm{Sym}^2T^*M\otimes NM)$.
In the chosen orthonormal frames, $B$ is expressed as
\begin{equation*}
    B=\sum_\alpha h^\alpha\otimes\nu_\alpha,\qquad h^\alpha_{ij}=\langle \overline\nabla_i e_j, \nu_\alpha \rangle,
\end{equation*}
where the $h^\alpha$'s are symmetric $2$-tensors. We will write
\begin{equation}\label{eq:sff_vector_valued_components}
    h_{ij}\doteq B(e_i,e_j)=\sum_\alpha h^\alpha_{ij}\nu_\alpha
\end{equation}
for the corresponding $NM$-valued quantity.

The \emph{mean curvature vector} $\mathbf H$ of $M$ is the trace of the second fundamental form $B$.
Under the assumption that $\mathbf H$ is nowhere vanishing, the principal normal $\nu=\mathbf H/|\mathbf H|$ is a globally well-defined unit
section of $NM$. This induces an orthogonal splitting of the normal bundle: $NM = \mathbb{R}\nu \oplus \mathcal{I}$, where $\mathcal{I} \doteq \nu^\perp \subset NM$.
Setting
\begin{equation*}
    \sigma^\beta_k\doteq\langle\nabla^\perp_k\nu,\nu_\beta\rangle,
\end{equation*}
we have
\begin{equation*}
    \sigma^{n+1}_k=0,\qquad \nabla^\perp_k\nu=\sum_{\beta\geq n+2}\sigma^\beta_k\nu_\beta,\qquad \bigl|\nabla^\perp\nu\bigr|^2=\sum_k\sum_{\beta\geq n+2}(\sigma^\beta_k)^2.
\end{equation*}
The tensor
$\nabla^\perp\nu\in\Gamma(T^*M\otimes\mathcal I)$ measures the failure of $\nu$ to be parallel in the
normal bundle. Differentiating $\mathbf H=|\mathbf H|\nu$ along $X\in\Gamma(TM)$ yields
\begin{equation}\label{eq:nabla_perp_H_split}
    \nabla^\perp_X\mathbf H=(\nabla_X|\mathbf H|) \nu+|\mathbf H| \nabla^\perp_X\nu.
\end{equation}

Turning back to the second fundamental form, if $|\mathbf H|>0$ then, in the adapted frame, one has $\operatorname{tr} h^{n+1}=|\mathbf H|$ and $\operatorname{tr} h^\alpha=0$ for $\alpha\geq n+2$. The \emph{trace-free second fundamental form} is $\mathring B\doteq B-\frac1ng\otimes\mathbf H$, with components
\begin{equation}\label{eq:tracefree_sff_components}
    \mathring h^{n+1}\doteq h^{n+1}-\frac{|\mathbf H|}{n}g,\qquad \mathring h^\alpha=h^\alpha\quad(\alpha\geq n+2).
\end{equation}
Setting
\begin{equation*}
    B_\nu\doteq h^{n+1}\otimes\nu,\qquad \mathring B_\nu\doteq \mathring h^{n+1}\otimes\nu,\qquad B_I\doteq \sum_{\alpha\geq n+2} h^\alpha\otimes\nu_\alpha,
\end{equation*}
we obtain the orthogonal decompositions $B=B_\nu+B_I$ and $\mathring B=\mathring B_\nu+ B_I$, along with the identities
\begin{equation}\label{eq:norm_decompositions}
    |B_I|^2=|B|^2-|B_\nu|^2,\qquad |\mathring B_\nu|^2=|B_\nu|^2-\frac{|\mathbf H|^2}{n}.
\end{equation}

We denote by
\begin{equation}\label{eq:first_covariant_derivative_component_sff}
    h^\alpha_{ijk}\doteq\bigl\langle(\nabla^\perp_k B)(e_i,e_j),\nu_\alpha\bigr\rangle,
    \qquad
    h^\alpha_{ijkl}\doteq\bigl\langle\bigl((\nabla^\perp)^2_{l,k}B\bigr)(e_i,e_j),\nu_\alpha\bigr\rangle
\end{equation}
the components of $\nabla^\perp B$ and $(\nabla^\perp)^2B$. Consistently with \eqref{eq:sff_vector_valued_components}, we write
\begin{equation}\label{eq:sff_vector_valued_first_derivative}
    h_{ijk}\doteq(\nabla^\perp_kB)(e_i,e_j)=\sum_\alpha h^\alpha_{ijk}\nu_\alpha,
    \qquad
    h_{ijkl}\doteq\bigl((\nabla^\perp)^2_{l,k}B\bigr)(e_i,e_j)=\sum_\alpha h^\alpha_{ijkl}\nu_\alpha .
\end{equation}
The components \eqref{eq:first_covariant_derivative_component_sff} obey the commutation rule
\begin{equation}\label{eq:sff_commutation_rule}
    h^\alpha_{ijkl}=h^\alpha_{ijlk}+\sum_m h^\alpha_{mj}R_{mikl}+\sum_m h^\alpha_{im}R_{mjkl}+ \sum_\beta h^\beta_{ij}R^\perp_{kl\beta\alpha}.
\end{equation}

\subsection{Fundamental equations}

The intrinsic and extrinsic geometry of $M$ are related by the following equations (see e.g.\ \cite{DoCarmo_RiemannianGeometry}, Chapter $6$).

\begin{itemize}
    \item \emph{Gauss' equation}: for $X,Y,Z,W\in\Gamma(TM)$,
    \begin{equation}\label{eq:gauss_equation_general}
        R(X,Y,Z,W)=\langle B(X,Z),B(Y,W)\rangle-\langle B(X,W),B(Y,Z)\rangle,
    \end{equation}
    that is, in components,
    \begin{equation}\label{eq:gauss_equation}
        R_{ijkl}=\sum_\beta h^\beta_{ik}h^\beta_{jl}-\sum_\beta h^\beta_{il}h^\beta_{jk}=\langle h_{ik},h_{jl}\rangle-\langle h_{il},h_{jk}\rangle.
    \end{equation}
    Tracing \eqref{eq:gauss_equation_general} according to \eqref{eq:ricci_trace_convention},
    \begin{equation}\label{eq:ricci_tensor_from_gauss_equation}
        \operatorname{Ric}(X,Z)=\langle\mathbf H,B(X,Z)\rangle-\sum_i\langle B(X,e_i),B(e_i,Z)\rangle.
    \end{equation}

    \item \emph{Weingarten's equation}: for $\xi\in\Gamma(NM)$ let $S_\xi:\Gamma(TM)\to\Gamma(TM)$ be the
    \emph{shape operator}, $\langle S_\xi(X),Y\rangle\doteq\langle B(X,Y),\xi\rangle$. Differentiating
    $\langle\xi,Y\rangle=0$ along $X\in\Gamma(TM)$ gives
    $\langle\overline\nabla_X\xi,Y\rangle=-\langle\xi,\overline\nabla_XY\rangle=-\langle S_\xi(X),Y\rangle$,
    so that, by \eqref{eq:normal_connection_definition},
    \begin{equation}\label{eq:weingarten_equation}
        \overline\nabla_X\xi=-S_\xi(X)+\nabla^\perp_X\xi .
    \end{equation}

    \item \emph{Codazzi's equation}: for $X,Y,Z\in\Gamma(TM)$,
    \begin{equation}\label{eq:codazzi_equation_general}
        (\nabla^\perp_XB)(Y,Z)=(\nabla^\perp_YB)(X,Z).
    \end{equation}
    In components, $h^\alpha_{ijk}=h^\alpha_{kji}$, which
    makes $h^\alpha_{ijk}$ totally symmetric in $i,j,k$.

    \item \emph{Ricci's equation}: for $X,Y\in\Gamma(TM)$ and $\eta,\xi\in\Gamma(NM)$,
    \begin{equation*}
        R^\perp(X,Y,\eta,\xi)=\langle[S_\xi,S_\eta]X,Y\rangle,
    \end{equation*}
    that is, in components,
    \begin{equation}\label{eq:ricci_equation}
        R^\perp_{ij\alpha\beta}=\sum_k\left(h^\alpha_{ik}h^\beta_{kj}-h^\alpha_{jk}h^\beta_{ki}\right)=[h^\alpha,h^\beta]_{ij}.
    \end{equation}
\end{itemize}

Codazzi's equation, together with the commutation rule \eqref{eq:sff_commutation_rule}, gives the rough Laplacian of the second fundamental form,
\begin{equation}\label{eq:sff_laplacian}
    \Delta^\perp h^\alpha_{ij}=\bigl\langle(\nabla^\perp)^2_{j,i}\mathbf H,\nu_\alpha\bigr\rangle+\sum_{k,m}h_{mk}^\alpha R_{mijk}+\sum_{k,m}h^\alpha_{im}R_{mkjk}+\sum_{k,\beta} h^\beta_{ik}R^\perp_{jk\beta\alpha},
\end{equation}
where $\Delta^\perp h^\alpha_{ij}$ stands for $\bigl\langle (\Delta^\perp B)(e_i,e_j),\nu_\alpha\bigr\rangle$.

\section{Translators as \texorpdfstring{$\rho$}{rho}-minimal immersions}\label{sec:translators}

Given its invariance under tangential reparametrizations, equation \eqref{eq:translator} is, as a system for the immersion $x$, degenerate, and it becomes a quasilinear elliptic system only after fixing a parametrization. As for self-shrinkers (see, e.g., \cite{IRR_sshc_classification}), the weighted formulation $\mathbf H_\rho\equiv 0$ of Subsection \ref{subsec:weighted_manifolds} does not remove the degeneracy but it makes clear that the curvature quantities of interest, namely $|\mathbf H|^2$, $|B|^2$, $|B_\nu|^2$, $|B_I|^2$ and the pinching function $f$ of Section \ref{sec:pinched_submanifolds}, satisfy scalar elliptic equations driven by the drift Laplacian $\Delta_\rho$.

This point of view has important consequences, because, for translators, $-\Delta_\rho$ is the exact elliptic counterpart of the heat operator $\partial_t-\Delta$ along the flow, so that the parabolic evolution equations of \cite{Naff_Planarity} and \cite{LeeNaffZhu_Planarity} can be easily transferred to the elliptic setting.

\subsection{Weighted manifolds, drift Laplacian and weighted-minimality}\label{subsec:weighted_manifolds}

A weighted manifold $N^m_\psi$ is a Riemannian manifold $(N^m,\langle\cdot,\cdot\rangle)$ equipped with the measure $d\operatorname{vol}_\psi\doteq e^{-\psi}d\operatorname{vol}_N$, where $\psi\in C^\infty(N)$.
The associated drift Laplacian (or weighted Laplacian or $\psi$-Laplacian) is
\begin{equation*}
    \Delta^N_\psi\varphi\doteq \operatorname{div}^N_\psi(\nabla^N \varphi)=\Delta^N \varphi-\langle\nabla^N \psi,\nabla^N \varphi\rangle,\qquad \operatorname{div}^N_\psi X\doteq e^\psi\operatorname{div}^N(e^{-\psi}X),
\end{equation*}
for $\varphi\in C^2(N)$. The operator $\Delta^N_\psi$ is linear and elliptic.

For an isometric immersion $x:M^n\to N^m_\psi$ the weighted mean curvature vector is $\mathbf H_\psi\doteq\mathbf H+(\overline\nabla \psi)^\perp$, and the immersion is $\psi$-minimal if $\mathbf H_\psi\equiv0$. Such an immersion induces the weighted structure $M^n_\psi\doteq(M,g,e^{-\psi\circ x}d\operatorname{vol}_M)$.

\subsection{Translators as \texorpdfstring{$\rho$}{rho}-minimal immersions and first-order identities}

Let $x:M^n\to\mathbb R^{n+p}$ be a translator satisfying $\mathbf H=\vec v^\perp$, with $|\vec v|=1$.
Setting $\rho(\cdot)\doteq-\langle \vec v,\cdot\rangle$ we have $\overline\nabla\rho=-\vec v$, hence $\nabla\rho=-\vec v^\top$ and $(\overline\nabla\rho)^\perp=-\vec v^\perp$. Thus $\mathbf H_\rho=\mathbf H-\vec v^\perp$ and translators are exactly the $\rho$-minimal immersions into $\mathbb R^{n+p}_\rho$. The drift Laplacian on $M^n_\rho$ reads as
\begin{equation}\label{eq:drift_laplacian_translators}
    \Delta_\rho \varphi=\Delta \varphi+\langle \vec v^\top,\nabla \varphi\rangle,\qquad \varphi\in C^2(M).
\end{equation}
We write, accordingly, $\Delta^\perp_\rho\doteq\Delta^\perp+\nabla^\perp_{\vec v^\top}$
for its counterpart on $(T^*M)^{\otimes k}\otimes NM$.

\begin{prop}
    Let $x:M^n\to\mathbb R^{n+p}$ be a translator. Then, for every $X\in\Gamma(TM)$,
    \begin{equation}\label{eq:V_top_derivative}
        \nabla_X\vec v^\top=S_{\mathbf H}(X),\qquad
        \nabla^\perp_X\mathbf H=-B(X,\vec v^\top).
    \end{equation}
    As a consequence, if $|\mathbf H|>0$, then $|\mathbf H|\langle\nabla^\perp_X\nu,\nu_\beta\rangle=-h^\beta(X,\vec v^\top)$
    for $\beta\geq n+2$. Moreover,
    \begin{equation}\label{eq:gradient_H_estimate}
        |\mathbf H| \bigl|\nabla^\perp\nu\bigr|\leq|B_I| |\vec v^\top|.
    \end{equation}
\end{prop}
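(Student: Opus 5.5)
The plan is to differentiate the splitting $\vec v=\vec v^\top+\vec v^\perp=\vec v^\top+\mathbf H$ along $M$, using that $\vec v$ is a constant vector, so $\overline\nabla_X\vec v=0$ for every $X\in\Gamma(TM)$. Differentiating the tangential part with the Gauss formula gives
\[
\overline\nabla_X\vec v^\top=\nabla_X\vec v^\top+B(X,\vec v^\top).
\]
Differentiating the normal part with Weingarten's equation \eqref{eq:weingarten_equation} gives
\[
\overline\nabla_X\mathbf H=-S_{\mathbf H}(X)+\nabla^\perp_X\mathbf H.
\]
Adding the two and setting the sum equal to zero, the tangential component yields $\nabla_X\vec v^\top=S_{\mathbf H}(X)$. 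The normal component yields $\nabla^\perp_X\mathbf H=-B(X,\vec v^\top)$. This proves \eqref{eq:V_top_derivative}.

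For the consequence, we work on $\{|\mathbf H|>0\}$ in the adapted frame. We take the inner product of \eqref{eq:nabla_perp_H_split} with $\nu_\beta$ for $\beta\geq n+2$. Since $\langle\nu,\nu_\beta\rangle=0$, only the term $|\mathbf H|\langle\nabla^\perp_X\nu,\nu_\beta\rangle$ survives on the left. On the right, \eqref{eq:V_top_derivative} gives $-\langle B(X,\vec v^\top),\nu_\beta\rangle=-h^\beta(X,\vec v^\top)$.

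For the estimate \eqref{eq:gradient_H_estimate}, recall that $\sigma^{n+1}_k=0$. Hence
\[
|\mathbf H|^2|\nabla^\perp\nu|^2=\sum_k\sum_{\beta\geq n+2}\bigl(h^\beta(e_k,\vec v^\top)\bigr)^2 .
\]
Writing $\vec v^\top=\sum_j v_je_j$, Cauchy--Schwarz in $j$ gives $\bigl(\sum_j h^\beta_{kj}v_j\bigr)^2\leq\bigl(\sum_j (h^\beta_{kj})^2\bigr)|\vec v^\top|^2$. Summing over $k$ and over $\beta\geq n+2$ gives $|B_I|^2|\vec v^\top|^2$. Taking square roots gives \eqref{eq:gradient_H_estimate} on $\{|\mathbf H|>0\}$, which is where $\nu$ is defined.

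There is no real obstacle here. The only care needed is sign conventions: $B(X,Y)=\overline\nabla_XY-\nabla_XY$, and the minus sign in Weingarten's equation. These are what make the tangential parts combine to $\nabla_X\vec v^\top-S_{\mathbf H}(X)=0$.
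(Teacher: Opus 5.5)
Your proof is correct and follows the paper's argument essentially verbatim. You differentiate $\vec v=\vec v^\top+\mathbf H$ using the Gauss formula and Weingarten's equation, separate the tangential and normal parts, then project onto $\mathcal I$ via \eqref{eq:nabla_perp_H_split} and finish with Cauchy--Schwarz, which you simply write out in components where the paper only names it.
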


\begin{proof}
    Since $\vec v$ is parallel in $\mathbb R^{n+p}$ we have $0=\overline\nabla_X(\vec v^\top)+\overline\nabla_X(\vec v^\perp)$.
    The first summand splits as
    $\nabla_X\vec v^\top+B(X,\vec v^\top)$, while by Weingarten's equation \eqref{eq:weingarten_equation}
    and $\mathbf H=\vec v^\perp$ the second one splits as $-S_{\mathbf H}(X)+\nabla^\perp_X\mathbf H$.
    Equating tangential and normal parts to zero gives \eqref{eq:V_top_derivative}.

    Projecting the second identity in \eqref{eq:V_top_derivative} onto $\mathcal I$ and comparing with \eqref{eq:nabla_perp_H_split} gives $|\mathbf H|\sigma^\beta_k=-h^\beta(e_k,\vec v^\top)$ for $\beta\geq n+2$. The estimate follows by Cauchy--Schwarz.
\end{proof}

\subsection{The Omori--Yau maximum principle for translators}

The proof of Theorem \ref{thm:codim_reduction_n_geq_5} uses a bounded function $\omega\geq0$, built in Section \ref{sec:codim_reduction_n_geq_5} out of $|B_I|^2$ and of the pinching function $f$ defined in \eqref{eq:pinching_function}, which vanishes identically if and only if $B_I\equiv0$. With $u\doteq|B_I|^2/f$ as in Section \ref{sec:pinched_submanifolds}, for every $\underline u>0$ it satisfies
\begin{equation*}
    \Delta_\rho\log\omega\ \geq\ \kappa_1|B_I|^2-\kappa_2\bigl|\nabla\log\omega\bigr|^2,
\end{equation*}
on $\{\omega>0\}\cap\{u\geq \underline u\}$, where $\kappa_1=42$ and $\kappa_2=\kappa_2(n,\varepsilon_0,\underline u)>0$. To prove that $\omega\equiv 0$ we reason by contradiction, assuming that $\omega^*\doteq \sup_M\omega>0$ and evaluating the inequality along a sequence of almost maximal points for $\omega$, so that $u\geq \underline u$ for some $\underline u=\underline u(n,\omega^*)>0$, and $\nabla\log\omega$ is almost zero. On a complete manifold with Ricci curvature bounded from below, the required sequence is provided by the Omori--Yau maximum principle, and Proposition \ref{prop:omori_yau_translators} shows that it holds for $\Delta_\rho$ in the present setting.

\begin{defn}\label{def:omori_yau}
    The weighted manifold $M^n_\rho$ satisfies the \emph{Omori--Yau maximum principle} for $\Delta_\rho$ if for every $\varphi\in C^2(M)$ with $\varphi^*\doteq\sup_M\varphi<+\infty$ there exists a sequence $\{q_k\}\subset M$ such that
    \begin{equation*}
        \varphi(q_k)>\varphi^*-\frac1k,\qquad |\nabla \varphi|(q_k)<\frac1k,\qquad \Delta_\rho\varphi(q_k)<\frac1k .
    \end{equation*}
\end{defn}

\begin{prop}\label{prop:omori_yau_translators}
    Let $x:M^n\to\mathbb R^{n+p}$ be a complete, $c$-quadratically pinched translator for some positive constant $c$. Then $\operatorname{Ric}\geq-(\sqrt{c}+c)g$,
    and $M^n_\rho$ satisfies the Omori--Yau maximum principle for $\Delta_\rho$.
\end{prop}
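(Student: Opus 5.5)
The plan is to obtain the Ricci lower bound directly from the traced Gauss equation and the translator equation. We then derive the Omori--Yau principle for $\Delta_\rho$ from the classical one for $\Delta$, using that the drift field $\vec v^\top$ is bounded.

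\emph{Ricci bound.} Fix $q\in M$ and a unit vector $X\in T_qM$, and use \eqref{eq:ricci_tensor_from_gauss_equation}:
\begin{equation*}
    \operatorname{Ric}(X,X)=\langle\mathbf H,B(X,X)\rangle-\sum_i|B(X,e_i)|^2 .
\end{equation*}
By Cauchy--Schwarz, $\langle\mathbf H,B(X,X)\rangle\geq-|\mathbf H|\,|B(X,X)|\geq-|\mathbf H|\,|B|$. Completing $X$ to an orthonormal frame, $\sum_i|B(X,e_i)|^2\leq|B|^2$. The pinching \eqref{eq:pinching_definition} gives $|B|\leq\sqrt c\,|\mathbf H|$ and $|B|^2\leq c|\mathbf H|^2$, hence
\begin{equation*}
    \operatorname{Ric}(X,X)\geq-(\sqrt c+c)|\mathbf H|^2 .
\end{equation*}
Since $\mathbf H=\vec v^\perp$ and $|\vec v|=1$, we have $|\mathbf H|\leq1$. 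This yields $\operatorname{Ric}\geq-(\sqrt c+c)g$.

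\emph{Omori--Yau for $\Delta_\rho$.} Since $M$ is complete with Ricci curvature bounded from below, the classical Omori--Yau maximum principle for the Laplace--Beltrami operator holds (Omori, Yau; see also Pigola--Rigoli--Setti). Given $\varphi\in C^2(M)$ with $\varphi^*<+\infty$, it provides a sequence $\{\tilde q_k\}$ with
\begin{equation*}
    \varphi(\tilde q_k)>\varphi^*-\tfrac1k,\qquad |\nabla\varphi|(\tilde q_k)<\tfrac1k,\qquad \Delta\varphi(\tilde q_k)<\tfrac1k .
\end{equation*}
By \eqref{eq:drift_laplacian_translators} and $|\vec v^\top|\leq|\vec v|=1$,
\begin{equation*}
    \Delta_\rho\varphi=\Delta\varphi+\langle\vec v^\top,\nabla\varphi\rangle\leq\Delta\varphi+|\nabla\varphi| ,
\end{equation*}
so $\Delta_\rho\varphi(\tilde q_k)<2/k$. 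Setting $q_k\doteq\tilde q_{2k}$ gives all three inequalities of Definition \ref{def:omori_yau} with bound $1/k$.

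The only step needing care is the Ricci bound, namely noticing that $|\mathbf H|\leq1$ makes the curvature globally bounded. Once that is in place, the boundedness of $\nabla\rho=-\vec v^\top$ reduces everything to the classical statement, with no Bakry--\'Emery condition needed.
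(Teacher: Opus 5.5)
Your proof is correct and follows the paper's argument essentially verbatim. You get the Ricci bound from the traced Gauss equation, Cauchy--Schwarz and $|\mathbf H|\leq 1$; you then obtain the principle from the classical Omori--Yau principle for $\Delta$, controlling the drift term via $|\vec v^\top|\leq 1$. The only cosmetic difference is that you reindex with $q_k=\tilde q_{2k}$, whereas the paper takes the classical sequence with tolerance $\frac{1}{2k}$ directly.
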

\begin{proof}
    By the pinching assumption and the bound $|\mathbf H|^2=|\vec v^\perp|^2\leq|\vec v|^2=1$, one has that $|B|^2\leq c|\mathbf H|^2\leq c$. For a unit vector $X$, \eqref{eq:ricci_tensor_from_gauss_equation} and Cauchy--Schwarz give $\operatorname{Ric}(X,X)\geq-|\mathbf H||B|-|B|^2\geq-(\sqrt{c}+c)$.

    Since $M$ is complete with Ricci curvature bounded from below, the classical Omori--Yau maximum principle holds for $\Delta$ (see, e.g., \cite[Theorem 2.3]{AliasMastroliaRigoli_MaximumPrinciples}). Given $\varphi$ bounded above there is $\{q_k\}\subset M$ with $\varphi(q_k)>\varphi^*-\frac{1}{2k}>\varphi^*-\frac 1k$, $|\nabla \varphi|(q_k)<\frac1{2k}$ and $\Delta \varphi(q_k)<\frac1{2k}$. By \eqref{eq:drift_laplacian_translators} and $|\vec v^\top|\leq1$,
    \begin{equation*}
        \Delta_\rho \varphi(q_k)=\Delta \varphi(q_k)+\langle \vec v^\top,\nabla \varphi\rangle(q_k)<\frac{1}{2k}+\frac{1}{2k}=\frac1k .
    \end{equation*}
\end{proof}

\section{Quadratically pinched submanifolds}\label{sec:pinched_submanifolds}

Throughout Sections \ref{sec:pinched_submanifolds}--\ref{sec:codim_reduction_n_geq_5}, we assume that
the immersion $x:M^n\to \mathbb R^{n+p}$ satisfies \eqref{eq:main_pinching}. It is not restrictive to assume that $0<\varepsilon_0<\frac{1}{3n}$. Indeed, if $\varepsilon_0=1/(3n)$, then the pinching condition and the identity
\[
|B|^2=|\mathring{B}|^2+\frac{1}{n}|H|^2
\]
imply that $\mathring{B}\equiv 0$. Thus $x$ is totally umbilical. By the classification of complete totally umbilical submanifolds of Euclidean space, $x(M)$ is either an affine $n$-plane or a round $n$-sphere contained in an $(n+1)$-dimensional affine subspace. The latter alternative is impossible, since closed translators do not exist. Hence $B\equiv 0$ and the second conclusion of Theorem \ref{thm:main_codim_reduction} is a straightforward consequence. On the other hand, if $\varepsilon_0>\frac{1}{3n}$ the pinching condition forces $B\equiv 0$, automatically giving the first conclusion of Theorem \ref{thm:main_codim_reduction}.

We define the pinching function as
\begin{equation}\label{eq:pinching_function}
    f\doteq \frac{4}{3n}|\mathbf H|^2-|B|^2,
\end{equation}
which trivially satisfies, on all $M$, the following chain of inequalities,
\begin{equation}\label{eq:f_upper_bound}
    0\leq \varepsilon_0|\mathbf H|^2\leq f\leq\frac{1}{3n}|\mathbf H|^2.
\end{equation}
On the open set $\{|\mathbf H|>0\}$, substituting \eqref{eq:norm_decompositions} into
\eqref{eq:pinching_function} yields the algebraic identity
\begin{equation}\label{eq:algebraic_identity}
    f+|B_I|^2+|\mathring B_\nu|^2=\frac{1}{3n}|\mathbf H|^2 .
\end{equation}

If $|\mathbf H|>0$, the strict positivity of $f$ ensures that the function
\begin{equation*}
    u \doteq \frac{|B_I|^2}{f}
\end{equation*}
is well defined. Dividing \eqref{eq:algebraic_identity} by $f$ and applying the lower bound $f\geq\varepsilon_0|\mathbf H|^2$, we obtain
\begin{equation}\label{eq:uniform_pinching_bounds_u}
    u\leq\frac{1}{3n}\frac{|\mathbf H|^2}{f}-1\leq\frac{1-3n\varepsilon_0}{3n\varepsilon_0}\doteq \overline u .
\end{equation}

\subsection{Derivatives in the adapted frame}

Assume that the mean curvature vector is nowhere vanishing. Since $\langle\nabla^\perp_k\nu,\nu\rangle=0$ and $\langle\nabla^\perp_k\nu,\nu_\beta\rangle=\sigma^\beta_k$ for $\beta\geq n+2$, the Leibniz rule for the connection of Subsection \ref{subsec:connections} applied to $B=\sum_\alpha h^\alpha\otimes\nu_\alpha$ gives
\begin{equation}\label{eq:levi_civita_vs_normal}
    h^{n+1}_{ijk}=\nabla_kh^{n+1}_{ij}-\sum_{\beta\geq n+2}\sigma^\beta_kh^\beta_{ij},
\end{equation}
where $\nabla_kh^{n+1}_{ij}$ is the Levi-Civita derivative of the symmetric $2$-tensor $h^{n+1}$. Moreover, $\sum_kh^\alpha_{ikk}=\sum_kh^\alpha_{kki}=\langle\nabla^\perp_i\mathbf H,\nu_\alpha\rangle$ by Codazzi's equation \eqref{eq:codazzi_equation_general}, so that, by \eqref{eq:nabla_perp_H_split}, for all $i$ and all $\beta\geq n+2$,
\begin{equation}\label{eq:codazzi_traces}
    \sum_kh^{n+1}_{ikk}=\nabla_i|\mathbf H|,\qquad \sum_kh^{\beta}_{ikk}=|\mathbf H| \sigma^\beta_i .
\end{equation}

\subsection{Gradient and quartic quantities}

Keep assuming $|\mathbf H|>0$ everywhere. Besides the quantities already introduced, we will need the following ones. First,
\begin{equation}\label{eq:C_E_definition}
    E_{ijk}\doteq h^{n+1}_{ijk}-\frac1n\delta_{ij}\nabla_k|\mathbf H| ,\qquad C_{ijk}\doteq\bigl\langle B_I(e_i,e_j),\nabla^\perp_k\nu\bigr\rangle=\sum_{\beta\geq n+2}\sigma^\beta_kh^\beta_{ij},
\end{equation}
with $|C|^2\doteq\sum_{i,j,k}C_{ijk}^2$ and $\langle E,C\rangle\doteq\sum_{i,j,k}E_{ijk}C_{ijk}$, and their counterparts in the directions of $\mathcal I$,
\begin{equation}\label{eq:E_I_D_definition}
    E^\gamma_{ijk}\doteq h^\gamma_{ijk}-\frac1n\delta_{ij}|\mathbf H| \sigma^\gamma_k,
    \qquad
    D^\gamma_{ijk}\doteq h^\gamma_{ijk}-h^{n+1}_{ij}\sigma^\gamma_k,
    \qquad \gamma\geq n+2,
\end{equation}
with $|E_I|^2\doteq\sum_{i,j,k}\sum_{\gamma\geq n+2}(E^\gamma_{ijk})^2$ and $|D|^2$ defined analogously. By \eqref{eq:tracefree_sff_components} and \eqref{eq:nabla_perp_H_split}, $E_{ijk}$ and $E^\gamma_{ijk}$ are, respectively, the $\nu$- and the $\mathcal I$-components of $\nabla^\perp\mathring B$, while by \eqref{eq:B_I_id_2} below the pair $(-C_{ijk},D^\gamma_{ijk})$ consists of the $\nu$- and the $\mathcal I$-components of $\nabla^\perp B_I$.
Second, we split the gradient and the quartic contractions of $B$ as
\begin{equation}\label{eq:gradient_pieces_notation}
    \mathcal N_\nu\doteq\sum_{i,j,k}(h^{n+1}_{ijk})^2,\qquad
    \mathcal N_I\doteq\sum_{i,j,k}\sum_{\gamma\geq n+2}(h^{\gamma}_{ijk})^2,\qquad
    \bigl|\nabla^\perp B\bigr|^2=\mathcal N_\nu+\mathcal N_I ,
\end{equation}
and
\begin{equation*}
    \begin{aligned}
        \mathcal S_I&\doteq\!\!\sum_{\alpha,\beta\geq n+2}\!\![\operatorname{tr}(h^\alpha h^\beta)]^2,\quad &
        \mathcal S_\nu&\doteq\!\!\sum_{\beta\geq n+2}\!\![\operatorname{tr}(h^{n+1} h^\beta)]^2, \\
        \mathcal K_I&\doteq\!\!\sum_{\alpha,\beta\geq n+2}\!\!|[h^\alpha,h^\beta]|^2, &
        \mathcal K_\nu&\doteq\!\!\sum_{\beta\geq n+2}\!\!|[h^{n+1},h^\beta]|^2,
    \end{aligned}
\end{equation*}
and we write $\mathcal R_I\doteq\mathcal S_I+\mathcal K_I+\mathcal K_\nu$. Setting
\begin{equation}\label{eq:S_K_definition}
    \mathcal S\doteq\sum_{\alpha,\beta}[\operatorname{tr}(h^\alpha h^\beta)]^2,\qquad
    \mathcal K\doteq\sum_{\alpha,\beta}|[h^\alpha,h^\beta]|^2,
\end{equation}
and using $\operatorname{tr}((h^{n+1})^2)=|B_\nu|^2$ and $[h^{n+1},h^{n+1}]=0$, one obtains
\begin{equation}\label{eq:splitting_quartic_contractions}
    \mathcal S=\mathcal S_I+2\mathcal S_\nu+|B_\nu|^4,
    \qquad
    \mathcal K=\mathcal K_I+2\mathcal K_\nu .
\end{equation}
Finally, we set
\begin{equation}\label{eq:definition_factor_Q_naff}
    Q_{ijk}\doteq\nabla_kh^{n+1}_{ij}-h^{n+1}_{ij}\nabla_k\log|\mathbf H| .
\end{equation}

\begin{lem}\label{lem:B_I_elementary_identities}
    With the above notation the following hold.
    \begin{enumerate}
        \item[(i)] For all $i,j,k$ and all $\gamma\geq n+2$, defining $|\nabla h^{n+1}|^2\doteq \sum_{i,j,k}(\nabla_k h^{n+1}_{ij})^2$,
        \begin{align}
            &\nabla_kh^{n+1}_{ij}=h^{n+1}_{ijk}+C_{ijk},
            &&|\nabla h^{n+1}|^2=\mathcal N_\nu+2\langle E,C\rangle+|C|^2,\label{eq:B_I_id_1}\\
            &\langle(\nabla^\perp_kB_I)(e_i,e_j),\nu\rangle=-C_{ijk},
            &&\langle(\nabla^\perp_kB_I)(e_i,e_j),\nu_\gamma\rangle=D^\gamma_{ijk},\label{eq:B_I_id_2}
        \end{align}
        and consequently $\bigl|\nabla^\perp B_I\bigr|^2=|C|^2+|D|^2$, that is,
        \begin{equation}\label{eq:norm_nabla_B_I}
            \bigl|\nabla^\perp B_I\bigr|^2=|C|^2+\mathcal N_I-2\sum_{i,j,k}h^{n+1}_{ij}\langle h_{ijk},\nabla^\perp_k\nu\rangle+|B_\nu|^2\bigl|\nabla^\perp\nu\bigr|^2 .
        \end{equation}
        \item[(ii)] The tensors $E$ and $E_I$ are trace-free in their first two indices, and
        \begin{equation}\label{eq:E_E_I_norms}
            |E|^2=\mathcal N_\nu-\frac1n\bigl|\nabla|\mathbf H|\bigr|^2,
            \qquad
            |E_I|^2=\mathcal N_I-\frac1n|\mathbf H|^2\bigl|\nabla^\perp\nu\bigr|^2 ,
        \end{equation}
        while $E^\gamma_{ijk}-D^\gamma_{ijk}=\mathring h^{n+1}_{ij}\sigma^\gamma_k$ for every $\gamma\geq n+2$.
        \item[(iii)] Setting $b_k\doteq\sum_{i,j}\mathring h^{n+1}_{ij}C_{ijk}$, one has $b_k=\sum_{\beta\geq n+2}\operatorname{tr}(h^{n+1}h^\beta)\sigma^\beta_k$ and
        \begin{equation}\label{eq:Q_C_contraction}
            \sum_{i,j,k}Q_{ijk}C_{ijk}=|C|^2+\langle E,C\rangle-\sum_k b_k\nabla_k\log|\mathbf H| .
        \end{equation}
    \end{enumerate}
\end{lem}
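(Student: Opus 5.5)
All three parts are pointwise algebraic identities, so I will work at a fixed point in the adapted frame $\{e_i\},\{\nu_\alpha\}$ with $\nu_{n+1}=\nu$. The only inputs are the Leibniz rule for the connection of Subsection \ref{subsec:connections}, the relation \eqref{eq:levi_civita_vs_normal}, the Codazzi traces \eqref{eq:codazzi_traces}, and the definitions \eqref{eq:C_E_definition}--\eqref{eq:E_I_D_definition}.

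\emph{Part (i).} The first identity in \eqref{eq:B_I_id_1} is a rearrangement of \eqref{eq:levi_civita_vs_normal}, since $C_{ijk}=\sum_{\beta\geq n+2}\sigma^\beta_kh^\beta_{ij}$. Squaring gives $|\nabla h^{n+1}|^2=\mathcal N_\nu+2\sum h^{n+1}_{ijk}C_{ijk}+|C|^2$. To replace $h^{n+1}_{ijk}$ by $E_{ijk}$ in the cross term, I use that $C$ is trace-free in $i,j$, because $\operatorname{tr}h^\beta=0$ for $\beta\geq n+2$. Hence the term $\frac1n\delta_{ij}\nabla_k|\mathbf H|$ pairs to zero against $C$.

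For \eqref{eq:B_I_id_2}, I write $B_I=B-h^{n+1}\otimes\nu$ and differentiate. This gives
\[
\nabla^\perp_kB_I=\nabla^\perp_kB-(\nabla_kh^{n+1})\otimes\nu-h^{n+1}\otimes\nabla^\perp_k\nu.
\]
Taking the $\nu$-component and using $\nabla^\perp_k\nu\perp\nu$ yields $h^{n+1}_{ijk}-\nabla_kh^{n+1}_{ij}=-C_{ijk}$. Taking the $\nu_\gamma$-component yields $h^\gamma_{ijk}-h^{n+1}_{ij}\sigma^\gamma_k=D^\gamma_{ijk}$.

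Summing squares gives $|\nabla^\perp B_I|^2=|C|^2+|D|^2$. Expanding $|D|^2$ produces \eqref{eq:norm_nabla_B_I}. The cross term is $-2\sum h^{n+1}_{ij}\sum_\gamma h^\gamma_{ijk}\sigma^\gamma_k$, which equals $-2\sum h^{n+1}_{ij}\langle h_{ijk},\nabla^\perp_k\nu\rangle$ because $\nabla^\perp_k\nu$ has no $\nu$-component. The remaining square is $\sum(h^{n+1}_{ij})^2(\sigma^\gamma_k)^2=|B_\nu|^2|\nabla^\perp\nu|^2$.

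\emph{Part (ii).} The traces are $\sum_iE_{iik}=\sum_ih^{n+1}_{iik}-\nabla_k|\mathbf H|$, which vanishes by total symmetry of $h_{ijk}$ and \eqref{eq:codazzi_traces}. The analogous computation with $|\mathbf H|\sigma^\gamma_k$ shows $E_I$ is trace-free. For the norms, I expand the square. The cross term is
\[
-\frac2n\sum_{i,k}h^{n+1}_{iik}\nabla_k|\mathbf H|=-\frac2n\bigl|\nabla|\mathbf H|\bigr|^2,
\]
and the pure term is $\frac1n\bigl|\nabla|\mathbf H|\bigr|^2$, which gives the first formula in \eqref{eq:E_E_I_norms}. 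The $E_I$ case is identical with $\nabla_k|\mathbf H|$ replaced by $|\mathbf H|\sigma^\gamma_k$. Finally,
\[
E^\gamma_{ijk}-D^\gamma_{ijk}=\Bigl(h^{n+1}_{ij}-\tfrac1n|\mathbf H|\delta_{ij}\Bigr)\sigma^\gamma_k=\mathring h^{n+1}_{ij}\sigma^\gamma_k
\]
by \eqref{eq:tracefree_sff_components}.

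\emph{Part (iii).} The formula $b_k=\sum_\beta\operatorname{tr}(\mathring h^{n+1}h^\beta)\sigma^\beta_k$ holds by definition. Since $\operatorname{tr}h^\beta=0$, the trace part of $\mathring h^{n+1}$ drops out, so $b_k=\sum_\beta\operatorname{tr}(h^{n+1}h^\beta)\sigma^\beta_k$. For \eqref{eq:Q_C_contraction}, I use part (i) to write $Q_{ijk}=h^{n+1}_{ijk}+C_{ijk}-h^{n+1}_{ij}\nabla_k\log|\mathbf H|$ and contract with $C$. The first two terms give $\langle E,C\rangle+|C|^2$, using trace-freeness of $C$ as before. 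The last term gives $-\sum_kb_k\nabla_k\log|\mathbf H|$, since $\sum_{i,j}h^{n+1}_{ij}C_{ijk}=\sum_{i,j}\mathring h^{n+1}_{ij}C_{ijk}=b_k$, again because $C$ is trace-free.

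There is no real obstacle here. The one point needing care is the sign and component bookkeeping in \eqref{eq:B_I_id_2}: $\nabla^\perp\nu$ takes values in $\mathcal I$, and the frame vectors $\nu_\beta$ for $\beta\geq n+2$ are not themselves parallel. The computation sidesteps the second issue because it works with the tensorial expressions $B$, $h^{n+1}\otimes\nu$ and $B_I=B-h^{n+1}\otimes\nu$, differentiating only $\nu$ and never the $\nu_\beta$ individually.
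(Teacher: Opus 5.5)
Your proposal is correct and follows essentially the same route as the paper: you rearrange the Levi-Civita versus normal derivative relation and use $\sum_i C_{iik}=0$ in (i), differentiate $B_I=B-h^{n+1}\otimes\nu$ to get its $\nu$- and $\nu_\gamma$-components, use the Codazzi traces in (ii), and contract $Q$ with $C$ in (iii). The only cosmetic difference is in (ii): you expand the square directly, whereas the paper phrases the same computation as orthogonality between the trace-free part $E^\alpha_{ijk}$ and the pure-trace part $\frac1n\delta_{ij}\langle\nabla^\perp_k\mathbf H,\nu_\alpha\rangle$.
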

\begin{proof}
    The first identity in \eqref{eq:B_I_id_1} is \eqref{eq:levi_civita_vs_normal} rewritten through \eqref{eq:C_E_definition}. Since $\sum_iC_{iik}=0$, which is a consequence of $\operatorname{tr}h^\beta=0$ for $\beta\geq n+2$, it follows from \eqref{eq:C_E_definition} that $\sum_{i,j,k}h^{n+1}_{ijk}C_{ijk}=\langle E,C\rangle$. Then, squaring and summing the first identity in \eqref{eq:B_I_id_1} yields the second.

    Identities \eqref{eq:B_I_id_2} follow by differentiating $B_I=B-h^{n+1}\otimes\nu$, taking, respectively, the inner product with $\nu$ and $\nu_\gamma$, and using the first identity in \eqref{eq:B_I_id_1}. Summing their squares gives $\bigl|\nabla^\perp B_I\bigr|^2=|C|^2+|D|^2$, and expanding $|D|^2$ through \eqref{eq:E_I_D_definition} gives \eqref{eq:norm_nabla_B_I}.

    For (ii), the traces of $E$ and $E_I$ in their first two indices vanish by \eqref{eq:codazzi_traces}. Temporarily setting $E^{n+1}_{ijk}\doteq E_{ijk}$, by \eqref{eq:C_E_definition}, \eqref{eq:E_I_D_definition} and \eqref{eq:nabla_perp_H_split}, for every $\alpha$ one has $h^\alpha_{ijk}=E^\alpha_{ijk}+\frac1n\delta_{ij}\langle\nabla^\perp_k\mathbf H,\nu_\alpha\rangle$. Trace-freeness makes the two summands orthogonal for every fixed $\alpha$ and $k$, so
    \begin{equation*}
        \sum_{i,j}(h^\alpha_{ijk})^2=\sum_{i,j}(E^\alpha_{ijk})^2+\frac1n\langle\nabla^\perp_k\mathbf H,\nu_\alpha\rangle^2 .
    \end{equation*}
    Summing over $k$ for $\alpha=n+1$, and over $k$ and $\alpha\geq n+2$, and using \eqref{eq:nabla_perp_H_split}, gives \eqref{eq:E_E_I_norms}. The last statement of (ii) is a simple consequence of \eqref{eq:tracefree_sff_components}.    

    Finally, again because $\sum_iC_{iik}=0$,
    \begin{equation*}
        b_k=\sum_{i,j}\mathring h^{n+1}_{ij}C_{ijk}
        =\sum_{i,j}h^{n+1}_{ij}C_{ijk}
        =\sum_{\beta\geq n+2}\operatorname{tr}(h^{n+1}h^\beta)\sigma^\beta_k,
    \end{equation*}
    and \eqref{eq:Q_C_contraction} follows by contracting \eqref{eq:definition_factor_Q_naff} with $C_{ijk}$.
\end{proof}    

\begin{rem}\label{rem:Q_term_alternative_expression}
    Thanks to \eqref{eq:tracefree_sff_components}, the compatibility of $\nabla$ with the metric $g$ and \eqref{eq:B_I_id_1},
    \begin{equation*}
        \nabla_k\mathring h^{n+1}_{ij}=\nabla_kh^{n+1}_{ij}-\frac{1}{n}\delta_{ij}\nabla_k|\mathbf H|=h^{n+1}_{ijk}-\frac{1}{n}\delta_{ij}\nabla_k|\mathbf H|+C_{ijk}=E_{ijk}+C_{ijk}.
    \end{equation*}
    Therefore we recover the definition of $Q_{ijk}$ in \cite[Equation (3.19)]{Naff_Planarity},
    \begin{align*}
        Q_{ijk}&=\nabla_kh^{n+1}_{ij}-h^{n+1}_{ij}\nabla_k\log|\mathbf H|
        \\&=\nabla_k\mathring h^{n+1}_{ij}+\frac{1}{n}\delta_{ij}\nabla_k|\mathbf H|-\left(\mathring h^{n+1}_{ij}+\frac{1}{n}\delta_{ij}|\mathbf H|\right)\nabla_k\log|\mathbf H|
        \\&=\nabla_k\mathring h^{n+1}_{ij}-\mathring h^{n+1}_{ij}\nabla_k\log|\mathbf H|
        \\&=E_{ijk}+C_{ijk}-\mathring h^{n+1}_{ij}\nabla_k\log|\mathbf H|.
    \end{align*}
\end{rem}

\section{Simons' equations for translators}\label{section_simons_equations}

\subsection{Second covariant derivatives of \texorpdfstring{$\mathbf H$}{H} and Simons' equation for \texorpdfstring{$B$}{B}}

\begin{prop}
    Let $x:M^n\to\mathbb R^{n+p}$ be a translator. Then, for all $i,j$,
    \begin{align}
        (\nabla^\perp)^2_{j,i}\mathbf H&=-\sum_k\langle \mathbf H,h_{jk}\rangle h_{ik}-\sum_k\langle \vec v,e_k\rangle h_{ijk},\label{eq:normal_second_covariant_derivatives_H}
        \\ (\Delta^\perp_\rho B)(e_i,e_j)&=2\sum_{k,l}\langle h_{ik},h_{jl}\rangle h_{kl}-\sum_{k,l}\langle h_{ij},h_{kl}\rangle h_{kl}\label{eq:simons_equation_translators}
        \\&\qquad -\sum_{k,l}\langle h_{kl},h_{jl}\rangle h_{ik}-\sum_{k,l}\langle h_{ik},h_{kl}\rangle h_{jl} .\notag
    \end{align}
\end{prop}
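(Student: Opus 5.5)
The plan is to differentiate the first-order identity $\nabla^\perp_X\mathbf H=-B(X,\vec v^\top)$ from \eqref{eq:V_top_derivative} once more, and then substitute the result into the general Laplacian formula \eqref{eq:sff_laplacian}, eliminating the curvature terms with Gauss' and Ricci's equations.

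For \eqref{eq:normal_second_covariant_derivatives_H}, fix a point and use normal coordinates there, so that $\nabla e_k=0$ at that point. The first step is to write $\nabla^\perp_i\mathbf H=-\sum_k\langle\vec v,e_k\rangle h_{ik}$. Differentiating this in the direction $e_j$ with the induced connection, the Leibniz rule produces two terms. One is $-\sum_k\langle\vec v,e_k\rangle h_{ikj}$, which equals $-\sum_k\langle \vec v,e_k\rangle h_{ijk}$ by Codazzi's equation \eqref{eq:codazzi_equation_general}. The other is $-\sum_k\langle\nabla_j\vec v^\top,e_k\rangle h_{ik}$. By \eqref{eq:V_top_derivative}, $\langle\nabla_j\vec v^\top,e_k\rangle=\langle S_{\mathbf H}(e_j),e_k\rangle=\langle\mathbf H,h_{jk}\rangle$, which gives exactly the first term. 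Note that $(\nabla^\perp)^2_{j,i}$ means differentiation first in $i$, then in $j$. Because the $\nabla_{\nabla_{e_j}e_i}$ correction vanishes at the center of normal coordinates, the computation is tensorial.

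For \eqref{eq:simons_equation_translators}, insert \eqref{eq:normal_second_covariant_derivatives_H} into \eqref{eq:sff_laplacian}. The gradient term $-\sum_k\langle\vec v,e_k\rangle h_{ijk}$ is exactly $-(\nabla^\perp_{\vec v^\top}B)(e_i,e_j)$. Moving it to the left-hand side turns $\Delta^\perp$ into $\Delta^\perp_\rho=\Delta^\perp+\nabla^\perp_{\vec v^\top}$. The remaining step is pure algebra, in which each curvature term of \eqref{eq:sff_laplacian} is expanded in turn:
\begin{enumerate}[label=(\alph*)]
\item $\sum h^\alpha_{mk}R_{mijk}$, with $R_{mijk}=\langle h_{mj},h_{ik}\rangle-\langle h_{mk},h_{ij}\rangle$ from \eqref{eq:gauss_equation};
\item $\sum h^\alpha_{im}R_{mkjk}$, which is $\sum h^\alpha_{im}\operatorname{Ric}_{mj}$ with $\operatorname{Ric}_{mj}=\langle\mathbf H,h_{mj}\rangle-\sum_k\langle h_{mk},h_{kj}\rangle$ from \eqref{eq:ricci_tensor_from_gauss_equation};
\item $\sum h^\beta_{ik}R^\perp_{jk\beta\alpha}$, with $R^\perp_{jk\beta\alpha}=\sum_l(h^\beta_{jl}h^\alpha_{lk}-h^\beta_{kl}h^\alpha_{lj})$ from \eqref{eq:ricci_equation}.
\end{enumerate}

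In the resulting sum, the term $\sum_m\langle\mathbf H,h_{mj}\rangle h_{im}$ coming from (b) cancels the term $-\sum_k\langle\mathbf H,h_{jk}\rangle h_{ik}$ coming from \eqref{eq:normal_second_covariant_derivatives_H}. The other quartic terms should then regroup into the four terms of \eqref{eq:simons_equation_translators}:
\begin{itemize}
\item (a) gives $\sum\langle h_{mj},h_{ik}\rangle h_{mk}-\sum\langle h_{ij},h_{mk}\rangle h_{mk}$, which is one copy of the first term and the second term;
\item (b) gives $-\sum\langle h_{mk},h_{kj}\rangle h_{im}$, which is the third term;
\item the Ricci-equation term (c), after rewriting $\sum_\beta h^\beta_{ik}h^\beta_{jl}$ as $\langle h_{ik},h_{jl}\rangle$, contributes a second copy of the first term and the fourth term.
\end{itemize}

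The main obstacle is purely bookkeeping. The index order and sign conventions in the commutation rule \eqref{eq:sff_commutation_rule}, the $(0,4)$ curvature convention, and the convention $R^\perp(X,Y,\eta,\xi)=\langle R^\perp(X,Y)\xi,\eta\rangle$ must all be used consistently. I would check the final signs on a sanity case, such as a hypersurface, where the normal curvature vanishes and the identity should reduce to the classical $\Delta_\rho h_{ij}=-|B|^2h_{ij}$ (scalar, $p=1$). Indeed, in that case the first and fourth terms cancel against each other, leaving $-|B|^2h_{ij}$ from the second term together with the third and fourth terms as appropriate.
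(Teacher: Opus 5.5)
Your proposal is correct and follows the paper's proof essentially step for step. You differentiate $\nabla^\perp_X\mathbf H=-B(X,\vec v^\top)$ in a geodesic frame using $\nabla\vec v^\top=S_{\mathbf H}$ and Codazzi, then expand the three curvature terms of \eqref{eq:sff_laplacian} via Gauss, the traced Gauss equation and Ricci's equation, cancel the $\mathbf H$-terms, and absorb $\nabla^\perp_{\vec v^\top}B$ into $\Delta^\perp_\rho$. One small slip in your hypersurface sanity check: the first and fourth terms do not cancel each other; rather, the first term $2(h^3)_{ij}$ cancels against the third and fourth terms $-(h^3)_{ij}-(h^3)_{ij}$, leaving $-|B|^2h_{ij}$.
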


\begin{proof}
    Fix a point $q\in M$ and choose a geodesic frame, i.e. $(\nabla_ie_j)(q)=0$. Differentiating the second identity in \eqref{eq:V_top_derivative}, at $q$
    \begin{align*}
        (\nabla^\perp)^2_{j,i}\mathbf H&=-\sum_k\nabla^\perp_j\left(\langle \vec v,e_k\rangle h_{ik}\right)
        \\&=-\sum_k e_j\left(\langle \vec v,e_k\rangle\right)h_{ik}-\sum_k \langle \vec v,e_k\rangle h_{ikj}
        \\&=-\sum_k\langle S_{\mathbf H}(e_j),e_k\rangle h_{ik}-\sum_k\langle \vec v,e_k\rangle h_{ijk}
        \\&=-\sum_k\langle \mathbf H, h_{jk}\rangle h_{ik}-\sum_k\langle \vec v,e_k\rangle h_{ijk}.
    \end{align*}
    We have used, in the second last equality, the first identity in \eqref{eq:V_top_derivative} together with Codazzi's equation. This concludes the proof of \eqref{eq:normal_second_covariant_derivatives_H}.

    To obtain \eqref{eq:simons_equation_translators}, we compute the three curvature terms of \eqref{eq:sff_laplacian} separately.
    By Gauss' equation \eqref{eq:gauss_equation},
    \begin{equation}\label{eq:simons_curvature_term_1}
        \sum_{k,m}h_{mk}R_{mijk}=\sum_{k,m}\langle h_{ik},h_{mj}\rangle h_{mk}-\sum_{k,m}\langle h_{ij},h_{mk}\rangle h_{mk}.
    \end{equation}
    By the trace convention \eqref{eq:ricci_trace_convention} one has $\sum_kR_{mkjk}=\operatorname{Ric}_{mj}$, and \eqref{eq:ricci_tensor_from_gauss_equation} gives $\operatorname{Ric}_{mj}=\langle\mathbf H,h_{mj}\rangle-\sum_k\langle h_{mk},h_{kj}\rangle$. Hence
    \begin{equation}\label{eq:simons_curvature_term_2}
        \sum_{k,m}h_{im}R_{mkjk}=\sum_m\langle\mathbf H,h_{mj}\rangle h_{im}-\sum_{k,m}\langle h_{mk},h_{kj}\rangle h_{im}.
    \end{equation}
    By Ricci's equation \eqref{eq:ricci_equation}, $R^\perp_{jk\beta\alpha}=\sum_m\left(h^\beta_{jm}h^\alpha_{mk}-h^\beta_{km}h^\alpha_{mj}\right)$, so that
    \begin{equation}\label{eq:simons_curvature_term_3}
        \sum_\alpha\left(\sum_{k,\beta}h^\beta_{ik}R^\perp_{jk\beta\alpha}\right)\nu_\alpha=\sum_{k,m}\langle h_{ik},h_{jm}\rangle h_{km}-\sum_{k,m}\langle h_{ik},h_{km}\rangle h_{jm}.
    \end{equation}

    Substituting \eqref{eq:normal_second_covariant_derivatives_H} and \eqref{eq:simons_curvature_term_1}--\eqref{eq:simons_curvature_term_3} into \eqref{eq:sff_laplacian}, $\sum_m\langle\mathbf H,h_{mj}\rangle h_{im}$ and $-\sum_k\langle\mathbf H,h_{jk}\rangle h_{ik}$ cancel, and moving $\sum_k\langle \vec v,e_k\rangle h_{ijk}=(\nabla^\perp_{\vec v^\top}B)(e_i,e_j)$ to the left-hand side yields \eqref{eq:simons_equation_translators}.
\end{proof}

\subsection{The drift Laplacians of \texorpdfstring{$|\mathbf H|^2$, $|B|^2$ and $f$}{|H|2, |B|2 and f}}

\begin{cor}\label{cor:simons_squared_norms_and_f}
    Let $x:M^n\to\mathbb R^{n+p}$ be a translator, and let $f$ be as in \eqref{eq:pinching_function}. Then the following hold.
    \begin{enumerate}
        \item[(i)] The mean curvature vector satisfies
        \begin{equation}\label{eq:simons_H}
            \Delta^\perp_\rho\mathbf H=-\sum_{k,l}\langle\mathbf H,h_{kl}\rangle h_{kl},
        \end{equation}
        and consequently
        \begin{equation}\label{eq:rho_laplacian_squared_norm_H_translators}
            \Delta_\rho|\mathbf H|^2=2|\nabla^\perp\mathbf H|^2-2|S_{\mathbf H}|^2 .
        \end{equation}
        \item[(ii)] The squared norm of the second fundamental form satisfies
        \begin{equation}\label{eq:rho_laplacian_squared_norm_B_translators}
            \Delta_\rho|B|^2=2\bigl|\nabla^\perp B\bigr|^2-2\mathcal S-2\mathcal K .
        \end{equation}
        \item[(iii)] The pinching function satisfies
        \begin{equation}\label{eq:rho_laplacian_f_identity}
            \Delta_\rho f=-2(\mathcal R_f+\mathcal G_f),
        \end{equation}
        where
        \begin{equation}\label{eq:reaction_gradient_f_definition}
            \mathcal R_f\doteq\frac{4}{3n}|S_{\mathbf H}|^2-\mathcal S-\mathcal K,
            \qquad
            \mathcal G_f\doteq\bigl|\nabla^\perp B\bigr|^2-\frac{4}{3n}|\nabla^\perp\mathbf H|^2 .
        \end{equation}
    \end{enumerate}
\end{cor}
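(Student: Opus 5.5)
The plan is to obtain all three identities by taking traces and inner products in the Simons equation \eqref{eq:simons_equation_translators}, using the Leibniz rule for the metric-compatible connection $\nabla^\perp$. Since $\nabla^\perp$ is compatible with the metric and the drift term is a first-order derivation, for any section $\mathcal T$ of $(T^*M)^{\otimes k}\otimes NM$ we have
\begin{equation*}
\Delta_\rho|\mathcal T|^2=2\langle\Delta^\perp_\rho\mathcal T,\mathcal T\rangle+2|\nabla^\perp\mathcal T|^2 .
\end{equation*}
This reduces (i) and (ii) to computing $\langle\Delta^\perp_\rho\mathbf H,\mathbf H\rangle$ and $\langle\Delta^\perp_\rho B,B\rangle$.

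For (i), I will trace \eqref{eq:simons_equation_translators} in $i=j$. Tracing commutes with $\Delta^\perp_\rho$ because $g$ is parallel. In the trace, the first term gives $2\sum_{k,l}\langle h_{ik},h_{il}\rangle h_{kl}$. The last two terms each give $-\sum_{k,l}\langle h_{kl},h_{il}\rangle h_{ik}$, which after relabelling equals $-\sum\langle h_{ik},h_{il}\rangle h_{kl}$, so the three cancel. The second term gives $-\sum\langle\mathbf H,h_{kl}\rangle h_{kl}$, which is \eqref{eq:simons_H}.

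As a cross-check, one can alternatively trace \eqref{eq:normal_second_covariant_derivatives_H} and use Codazzi, which gives $\sum_k\langle\vec v,e_k\rangle h_{iik}$ summed $=\nabla^\perp_{\vec v^\top}\mathbf H$. Pairing with $\mathbf H$ gives $-\sum_{k,l}\langle\mathbf H,h_{kl}\rangle^2=-|S_{\mathbf H}|^2$, since $\langle S_{\mathbf H}e_k,e_l\rangle=\langle\mathbf H,h_{kl}\rangle$. This yields \eqref{eq:rho_laplacian_squared_norm_H_translators}.

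For (ii), I will pair \eqref{eq:simons_equation_translators} with $h_{ij}$ and sum.
\begin{itemize}
\item The second term gives $-\sum_{i,j,k,l}\langle h_{ij},h_{kl}\rangle^2=-\sum_{\alpha,\beta}[\operatorname{tr}(h^\alpha h^\beta)]^2=-\mathcal S$.
\item The last two terms are equal after swapping $i$ and $j$. Together they give $-2\sum\langle h_{kl},h_{jl}\rangle\langle h_{ik},h_{ij}\rangle$, which in components is $-2\sum_{\alpha,\beta}\operatorname{tr}(h^\alpha h^\alpha h^\beta h^\beta)$.
\item The first term gives $2\sum_{\alpha,\beta}\operatorname{tr}(h^\alpha h^\beta h^\alpha h^\beta)$.
\end{itemize}
The commutator identity $|[h^\alpha,h^\beta]|^2=2\operatorname{tr}((h^\alpha)^2(h^\beta)^2)-2\operatorname{tr}(h^\alpha h^\beta h^\alpha h^\beta)$ then shows that the first and last terms together give exactly $-\mathcal K$. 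This proves \eqref{eq:rho_laplacian_squared_norm_B_translators}.

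The one genuinely delicate step is this index bookkeeping: matching each contraction with the correct trace expression, and checking the sign convention of $|[h^\alpha,h^\beta]|^2$ as a sum of squares of entries. I would verify it in a frame diagonalising one $h^\alpha$, or simply by expanding the squared commutator.

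Finally, (iii) follows by linearity from (i) and (ii):
\begin{equation*}
\Delta_\rho f=\frac{4}{3n}\bigl(2|\nabla^\perp\mathbf H|^2-2|S_{\mathbf H}|^2\bigr)-2|\nabla^\perp B|^2+2\mathcal S+2\mathcal K .
\end{equation*}
Regrouping the terms gives $-2(\mathcal R_f+\mathcal G_f)$, with $\mathcal R_f$ and $\mathcal G_f$ as in \eqref{eq:reaction_gradient_f_definition}.
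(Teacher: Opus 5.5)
Your proposal is correct and follows the paper's proof: you trace the drift Simons identity \eqref{eq:simons_equation_translators} for (i), contract it with $h_{ij}$ and use $|[h^\alpha,h^\beta]|^2=2\operatorname{tr}((h^\alpha)^2(h^\beta)^2)-2\operatorname{tr}((h^\alpha h^\beta)^2)$ for (ii), and take $\frac{4}{3n}$ times (i) minus (ii) for (iii). Your cross-check for (i), tracing \eqref{eq:normal_second_covariant_derivatives_H} directly, is also valid and slightly shorter. It only needs $\sum_i h_{iik}=\nabla^\perp_k\mathbf H$, which holds because the trace commutes with $\nabla^\perp$, so Codazzi is not actually needed at that step.
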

\begin{proof}
    (i) We sum over $i=j$ in \eqref{eq:simons_equation_translators}, using that $\operatorname{tr}_g$ commutes with $\Delta^\perp_\rho$. After relabelling dummy indices, the first, third and fourth terms cancel out. Hence only the second term survives, giving \eqref{eq:simons_H}. Then
    \begin{equation*}
        \Delta_\rho|\mathbf H|^2=2|\nabla^\perp\mathbf H|^2+2\langle\Delta^\perp_\rho\mathbf H,\mathbf H\rangle=2|\nabla^\perp\mathbf H|^2-2\sum_{k,l}\langle\mathbf H,h_{kl}\rangle^2,
    \end{equation*}
    and $|S_{\mathbf H}|^2=\sum_{k,l}\langle\mathbf H,h_{kl}\rangle^2$ by definition of $S_{\mathbf H}$.

    (ii) Since $\Delta_\rho|B|^2=2\bigl|\nabla^\perp B\bigr|^2+2\langle\Delta^\perp_\rho B,B\rangle$, we contract \eqref{eq:simons_equation_translators} with $h_{ij}$,
    \begin{align*}
        \Delta_\rho|B|^2&=2\bigl|\nabla^\perp B\bigr|^2+4\sum_{i,j,k,l}\langle h_{ik}, h_{jl}\rangle\langle h_{kl},h_{ij}\rangle-2\sum_{i,j,k,l}\langle h_{ij},h_{kl}\rangle^2
        \\&\qquad -2\sum_{i,j,k,l}\langle h_{kl},h_{jl}\rangle\langle h_{ik},h_{ij}\rangle-2\sum_{i,j,k,l}\langle h_{ik},h_{kl}\rangle\langle h_{jl},h_{ij}\rangle
        \\&=2\bigl|\nabla^\perp B\bigr|^2+4\sum_{i,j,k,l}\langle h_{ik}, h_{jl}\rangle\langle h_{kl},h_{ij}\rangle-2\sum_{i,j,k,l}\langle h_{ij},h_{kl}\rangle^2-4\sum_{i,j,k,l}\langle h_{kl},h_{jl}\rangle\langle h_{ik},h_{ij}\rangle
        \\&=2\bigl|\nabla^\perp B\bigr|^2+4\sum_{\alpha,\beta}\operatorname{tr}((h^\alpha h^\beta)^2)-2\sum_{\alpha,\beta}[\operatorname{tr}(h^\alpha h^\beta)]^2-4\sum_{\alpha,\beta}\operatorname{tr}[(h^\alpha)^2 (h^\beta)^2]
        \\&=2\bigl|\nabla^\perp B\bigr|^2-2\mathcal S-2\mathcal K.
    \end{align*}
    The second equality follows from relabelling the indices in the final term of the second line, while the third simply rewrites the previous step in matrix notation. The last line follows from the identity $|[h^\alpha, h^\beta]|^2 = 2\operatorname{tr}[(h^\alpha)^2 (h^\beta)^2] - 2\operatorname{tr}((h^\alpha h^\beta)^2)$ together with \eqref{eq:S_K_definition}.

    (iii) Subtract \eqref{eq:rho_laplacian_squared_norm_B_translators} from $\frac{4}{3n}$ times \eqref{eq:rho_laplacian_squared_norm_H_translators}.
\end{proof}

\subsection{The drift Laplacians of \texorpdfstring{$|B_\nu|^2$}{|Bnu|2} and \texorpdfstring{$|B_I|^2$}{|BI|2}}

Throughout this subsection we assume that $|\mathbf H|>0$, so that the splitting $NM=\mathbb R\nu\oplus\mathcal I$ of Subsection \ref{subsec:sff} is available.

\begin{lem}
    Let $x:M^n\to\mathbb R^{n+p}$ be a translator with $|\mathbf H|>0$. Then, for all $i,j$ and all $\beta\geq n+2$,
    \begin{align}
        (\Delta h^{n+1})_{ij}&=\Delta^\perp h^{n+1}_{ij}+2\sum_k\bigl\langle h_{ijk},\nabla^\perp_k\nu\bigr\rangle+\bigl\langle h_{ij},\Delta^\perp\nu\bigr\rangle,\label{eq:delta_h_vs_delta_perp_h}\\
        \bigl\langle\Delta^\perp_\rho\nu,\nu_\beta\bigr\rangle&=-\operatorname{tr}(h^{n+1}h^\beta)-2\sum_k\nabla_k\log|\mathbf H| \sigma^\beta_k,\label{eq:drift_laplacian_nu}
        \\ \langle\Delta^\perp_\rho\nu,\nu\rangle&=-\bigl|\nabla^\perp\nu\bigr|^2.\label{eq:drift_laplacian_nu_nu_component}
    \end{align}
\end{lem}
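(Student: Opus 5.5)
The three identities are local computations in the adapted frame $\{\nu_\alpha\}$ with $\nu_{n+1}=\nu$, using the Leibniz rule for the connection of Subsection \ref{subsec:connections} and the first-order identities \eqref{eq:V_top_derivative}, \eqref{eq:nabla_perp_H_split}. We fix $q\in M$ and work in a geodesic frame $\{e_i\}$ at $q$, so that all Levi-Civita correction terms vanish at $q$.

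\emph{Identity \eqref{eq:delta_h_vs_delta_perp_h}.} We write $h^{n+1}_{ij}=\langle h_{ij},\nu\rangle$, viewing $h^{n+1}$ as the pairing of $B$ with the normal section $\nu$. Metric compatibility gives $\nabla_k h^{n+1}_{ij}=\langle h_{ijk},\nu\rangle+\langle h_{ij},\nabla^\perp_k\nu\rangle$, which is \eqref{eq:levi_civita_vs_normal}. Differentiating once more and tracing over $k$ at $q$ yields
\begin{equation*}
(\Delta h^{n+1})_{ij}=\langle(\Delta^\perp B)(e_i,e_j),\nu\rangle+2\sum_k\langle h_{ijk},\nabla^\perp_k\nu\rangle+\langle h_{ij},\Delta^\perp\nu\rangle,
\end{equation*}
which is the claim.

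\emph{Identity \eqref{eq:drift_laplacian_nu_nu_component}.} Differentiating $\langle\nu,\nu\rangle=1$ twice gives $\langle\nabla^\perp_k\nu,\nu\rangle=0$ and then $\langle\Delta^\perp\nu,\nu\rangle=-|\nabla^\perp\nu|^2$. Since $\langle\nabla^\perp_{\vec v^\top}\nu,\nu\rangle=0$, the drift term does not contribute.

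\emph{Identity \eqref{eq:drift_laplacian_nu}.} This is the main step. We write $\nu=\mathbf H/|\mathbf H|$ and apply the product rule for $\Delta^\perp_\rho$:
\begin{equation*}
\Delta^\perp_\rho\mathbf H=(\Delta_\rho|\mathbf H|)\nu+2\nabla^\perp_{\nabla|\mathbf H|}\nu+|\mathbf H|\Delta^\perp_\rho\nu.
\end{equation*}
Pairing with $\nu_\beta$ for $\beta\geq n+2$ kills the first term and gives
\begin{equation*}
|\mathbf H|\langle\Delta^\perp_\rho\nu,\nu_\beta\rangle=\langle\Delta^\perp_\rho\mathbf H,\nu_\beta\rangle-2\sum_k\nabla_k|\mathbf H|\,\sigma^\beta_k.
\end{equation*}
By \eqref{eq:simons_H},
\begin{equation*}
\langle\Delta^\perp_\rho\mathbf H,\nu_\beta\rangle=-\sum_{k,l}\langle\mathbf H,h_{kl}\rangle h^\beta_{kl}=-|\mathbf H|\operatorname{tr}(h^{n+1}h^\beta).
\end{equation*}
Dividing by $|\mathbf H|>0$ gives \eqref{eq:drift_laplacian_nu}.

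The only real obstacle is bookkeeping. One must confirm that the product rule for $\Delta^\perp$ on a function times a section holds with the factor $2$ on the cross term, and that the drift part splits consistently: $\nabla^\perp_{\vec v^\top}(|\mathbf H|\nu)=\langle\vec v^\top,\nabla|\mathbf H|\rangle\nu+|\mathbf H|\nabla^\perp_{\vec v^\top}\nu$, which recombines into $\Delta_\rho$ on the scalar and $\Delta^\perp_\rho$ on $\nu$. No properties of translators are used here beyond \eqref{eq:simons_H}.
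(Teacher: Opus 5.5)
Your proof is correct and follows the paper's argument essentially step by step. The first identity comes from differentiating $h^{n+1}_{ij}=\langle h_{ij},\nu\rangle$ twice in a geodesic frame and tracing. The second comes from expanding $\Delta^\perp_\rho(|\mathbf H|\nu)$ with the product rule and comparing $\nu_\beta$-components with \eqref{eq:simons_H}. The third follows from $\Delta|\nu|^2=0$ together with $\langle\nabla^\perp_{\vec v^\top}\nu,\nu\rangle=0$.
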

\begin{proof}
    Fix a point $q\in M$ and a geodesic tangent frame in $q$, i.e. satisfying $(\nabla_ie_j)(q)=0$.
    Differentiating $h^{n+1}=\langle B,\nu\rangle$ twice gives, at $q$,
    \begin{equation*}
        \nabla_l\nabla_kh^{n+1}_{ij}=\bigl\langle h_{ijkl},\nu\bigr\rangle+\bigl\langle h_{ijk},\nabla^\perp_l\nu\bigr\rangle+\bigl\langle h_{ijl},\nabla^\perp_k\nu\bigr\rangle+\bigl\langle h_{ij},(\nabla^\perp)^2_{l,k}\nu\bigr\rangle.
    \end{equation*}
    Tracing over $k=l$ yields \eqref{eq:delta_h_vs_delta_perp_h}.

    For \eqref{eq:drift_laplacian_nu}, expand $\mathbf H=|\mathbf H|\nu$ as
    \begin{equation*}
        \Delta^\perp_\rho\mathbf H=(\Delta_\rho|\mathbf H|) \nu+2\sum_k\nabla_k|\mathbf H| \nabla^\perp_k\nu+|\mathbf H| \Delta^\perp_\rho\nu.
    \end{equation*}
    Then, the inner product with $\nu_\beta$, for some $\beta\geq n+2$, is
    \begin{equation*}
        \langle \Delta^\perp_\rho\mathbf H,\nu_\beta\rangle=2\sum_k\nabla_k|\mathbf H| \sigma^\beta_k+|\mathbf H| \langle\Delta^\perp_\rho\nu,\nu_\beta\rangle.
    \end{equation*}
    Comparing the $\nu_\beta$-components with those of \eqref{eq:simons_H}, which are $-|\mathbf H|\operatorname{tr}(h^{n+1}h^\beta)$, yields \eqref{eq:drift_laplacian_nu}.

    The last statement follows from $\Delta|\nu|^2=0$ and $\langle\nabla^\perp_{\vec v^\top}\nu,\nu\rangle=0$.
\end{proof}

\begin{lem}
    Let $x:M^n\to\mathbb R^{n+p}$ be a translator with $|\mathbf H|>0$. Then
    \begin{equation}\label{eq:rho_laplacian_squared_norm_B_nu}
        \Delta_\rho|B_\nu|^2=2|\nabla h^{n+1}|^2-2\mathcal K_\nu-4\mathcal S_\nu-2|B_\nu|^4+\mathcal D_\nu,
    \end{equation}
    where
    \begin{equation}\label{eq:defect_term_C}
        \mathcal D_\nu\doteq 4\sum_{i,j,k}h^{n+1}_{ij}\bigl\langle h_{ijk},\nabla^\perp_k\nu\bigr\rangle-2|B_\nu|^2\bigl|\nabla^\perp\nu\bigr|^2-4\sum_k b_k\nabla_k\log|\mathbf H|.
    \end{equation}
\end{lem}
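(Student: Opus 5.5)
Since $h^{n+1}$ is an ordinary symmetric $2$-tensor and $|B_\nu|^2=|h^{n+1}|^2$, the natural starting point is the Bochner-type expansion
\begin{equation*}
    \Delta_\rho|B_\nu|^2=2|\nabla h^{n+1}|^2+2\sum_{i,j}h^{n+1}_{ij}\bigl((\Delta h^{n+1})_{ij}+\nabla_{\vec v^\top}h^{n+1}_{ij}\bigr),
\end{equation*}
which already produces the term $2|\nabla h^{n+1}|^2$. The proof then reduces to computing the drift Laplacian of the tensor $h^{n+1}$ and contracting it with $h^{n+1}$.

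To compute the drift Laplacian, add the drift term to \eqref{eq:delta_h_vs_delta_perp_h}. Writing $h^{n+1}_{ij}=\langle h_{ij},\nu\rangle$, the first-order drift term splits as $\nabla_{\vec v^\top}h^{n+1}_{ij}=\langle(\nabla^\perp_{\vec v^\top}B)(e_i,e_j),\nu\rangle+\langle h_{ij},\nabla^\perp_{\vec v^\top}\nu\rangle$. This gives
\begin{equation*}
    (\Delta_\rho h^{n+1})_{ij}=\langle(\Delta^\perp_\rho B)(e_i,e_j),\nu\rangle+2\sum_k\langle h_{ijk},\nabla^\perp_k\nu\rangle+\langle h_{ij},\Delta^\perp_\rho\nu\rangle.
\end{equation*}
Contracting with $2h^{n+1}_{ij}$, the middle term yields exactly the first piece $4\sum h^{n+1}_{ij}\langle h_{ijk},\nabla^\perp_k\nu\rangle$ of $\mathcal D_\nu$. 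For the last term, decompose $\Delta^\perp_\rho\nu$ into its $\nu$-component and its $\mathcal I$-components. By \eqref{eq:drift_laplacian_nu_nu_component}, the $\nu$-component contributes $-2|B_\nu|^2|\nabla^\perp\nu|^2$. By \eqref{eq:drift_laplacian_nu}, the $\mathcal I$-components contribute
\begin{equation*}
    2\sum_{\beta\geq n+2}\operatorname{tr}(h^{n+1}h^\beta)\Bigl(-\operatorname{tr}(h^{n+1}h^\beta)-2\sum_k\nabla_k\log|\mathbf H|\,\sigma^\beta_k\Bigr)=-2\mathcal S_\nu-4\sum_k b_k\nabla_k\log|\mathbf H|,
\end{equation*}
where the last equality uses Lemma \ref{lem:B_I_elementary_identities}(iii) to recognise $b_k$. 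This recovers the remaining two pieces of $\mathcal D_\nu$ together with a first $-2\mathcal S_\nu$.

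The remaining reaction term is $2\sum_{i,j}h^{n+1}_{ij}\langle(\Delta^\perp_\rho B)(e_i,e_j),\nu\rangle$, which we evaluate with the Simons identity \eqref{eq:simons_equation_translators}, written in matrix form for $\alpha=n+1$:
\begin{equation*}
    2\sum_\beta h^\beta h^{n+1}h^\beta-\sum_\beta\operatorname{tr}(h^{n+1}h^\beta)h^\beta-\sum_\beta\bigl(h^{n+1}(h^\beta)^2+(h^\beta)^2h^{n+1}\bigr).
\end{equation*}
Contracting with $2h^{n+1}$, the second sum gives $-2\sum_\beta[\operatorname{tr}(h^{n+1}h^\beta)]^2=-2|B_\nu|^4-2\mathcal S_\nu$. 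The first and third sums combine into $4\sum_\beta\bigl(\operatorname{tr}((h^{n+1}h^\beta)^2)-\operatorname{tr}((h^{n+1})^2(h^\beta)^2)\bigr)=-2\sum_\beta|[h^{n+1},h^\beta]|^2=-2\mathcal K_\nu$, since the $\beta=n+1$ commutator vanishes. Adding the two $-2\mathcal S_\nu$ contributions gives $-4\mathcal S_\nu$, and the formula \eqref{eq:rho_laplacian_squared_norm_B_nu} follows.

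The main obstacle is bookkeeping rather than any conceptual step. First, the drift term must be split consistently between $\nabla^\perp_{\vec v^\top}B$ and $\nabla^\perp_{\vec v^\top}\nu$, so that the drift Laplacians $\Delta^\perp_\rho B$ and $\Delta^\perp_\rho\nu$ appear exactly, not $\Delta^\perp B$ and $\Delta^\perp\nu$. Second, one must check that the $\mathcal S_\nu$ contributions coming from $\Delta^\perp_\rho\nu$ and from the Simons term add with the same sign to give $-4\mathcal S_\nu$.
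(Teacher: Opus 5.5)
Your proposal is correct and follows essentially the paper's route: the Bochner expansion, \eqref{eq:delta_h_vs_delta_perp_h}, and the $\nu$- and $\mathcal I$-components of $\Delta^\perp_\rho\nu$ from \eqref{eq:drift_laplacian_nu}--\eqref{eq:drift_laplacian_nu_nu_component}. The only difference is in how the reaction term is obtained. You absorb the drift into $\Delta^\perp_\rho B$ from the start and read off its $\nu$-component from the Simons identity \eqref{eq:simons_equation_translators}. The paper instead recomputes $\sum_{i,j}h^{n+1}_{ij}\Delta^\perp h^{n+1}_{ij}$ term by term from \eqref{eq:sff_laplacian} and cancels the $\langle\vec v,e_k\rangle$ terms by hand. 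Your bookkeeping is slightly more economical, and all your signs, including the two $-2\mathcal S_\nu$ contributions, check out.
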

\begin{proof}
    Since $|B_\nu|^2=|h^{n+1}|^2$, expanding the $\rho$-Laplacian of $|B_\nu|^2$ gives
    \begin{equation}\label{eq:bochner_B_nu}
        \Delta_\rho|B_\nu|^2=2|\nabla h^{n+1}|^2+2\bigl\langle h^{n+1},\Delta h^{n+1}\bigr\rangle+\bigl\langle \vec v^\top,\nabla|B_\nu|^2\bigr\rangle .
    \end{equation}
    Contracting \eqref{eq:delta_h_vs_delta_perp_h} with $h^{n+1}_{ij}$ yields
    \begin{equation}\label{eq:pairing_delta_h}
        \langle h^{n+1},\Delta h^{n+1}\rangle=\sum_{i,j}h^{n+1}_{ij}\Delta^\perp h^{n+1}_{ij}+2\sum_{i,j,k}h^{n+1}_{ij}\langle h_{ijk},\nabla^\perp_k\nu\rangle+\sum_\beta\operatorname{tr}(h^{n+1}h^\beta)\langle\Delta^\perp\nu,\nu_\beta\rangle,
    \end{equation}
    while contracting the first identity in \eqref{eq:B_I_id_1} with $2\langle \vec v,e_k\rangle h^{n+1}_{ij}$ and using, as established in Lemma \ref{lem:B_I_elementary_identities}(iii), that $b_k=\sum_{\beta\geq n+2}\operatorname{tr}(h^{n+1}h^\beta)\sigma^\beta_k$,
    \begin{equation*}
        \langle \vec v^\top,\nabla|B_\nu|^2\rangle=2\sum_{i,j,k}\langle \vec v,e_k\rangle h^{n+1}_{ij}h^{n+1}_{ijk}+2\sum_\beta\operatorname{tr}(h^{n+1}h^\beta)\langle\nabla^\perp_{\vec v^\top}\nu,\nu_\beta\rangle.
    \end{equation*}
    To conclude the proof, it remains to compute $\sum_{i,j}h^{n+1}_{ij}\Delta^\perp h^{n+1}_{ij}$. If we set $\alpha = n + 1$ in \eqref{eq:sff_laplacian}, contract the result with $h^{n+1}_{ij}$, and use the $\nu$-component of \eqref{eq:normal_second_covariant_derivatives_H} as well as equations \eqref{eq:gauss_equation}, \eqref{eq:ricci_tensor_from_gauss_equation}, and \eqref{eq:ricci_equation}, the four contributions evaluate to:
    \begin{align*}
        &-|\mathbf H|\operatorname{tr}((h^{n+1})^3)-\sum_{i,j,k}\langle \vec v,e_k\rangle h^{n+1}_{ij}h^{n+1}_{ijk},
        &&\sum_\beta\operatorname{tr}((h^{n+1}h^\beta)^2)-\sum_\beta[\operatorname{tr}(h^{n+1}h^\beta)]^2,\\
        &|\mathbf H|\operatorname{tr}((h^{n+1})^3)-\sum_\beta\operatorname{tr}[(h^{n+1})^2(h^\beta)^2],
        &&\sum_\beta\operatorname{tr}((h^{n+1}h^\beta)^2)-\sum_\beta\operatorname{tr}[(h^{n+1})^2(h^\beta)^2].
    \end{align*}
    The terms $\mp|\mathbf H|\operatorname{tr}((h^{n+1})^3)$ cancel and, since $|[h^{n+1},h^\beta]|^2=2\operatorname{tr}[(h^{n+1})^2(h^\beta)^2]-2\operatorname{tr}((h^{n+1}h^\beta)^2)$, splitting off the summand $\beta=n+1$ gives
    \begin{equation}\label{eq:sum_h_delta_perp_h}
        \sum_{i,j}h^{n+1}_{ij}\Delta^\perp h^{n+1}_{ij}=-\sum_{i,j,k}\langle \vec v,e_k\rangle h^{n+1}_{ij}h^{n+1}_{ijk}-\mathcal K_\nu-\mathcal S_\nu-|B_\nu|^4 .
    \end{equation}
    Substituting \eqref{eq:pairing_delta_h}--\eqref{eq:sum_h_delta_perp_h} into \eqref{eq:bochner_B_nu}, the terms containing $\langle \vec v,e_k\rangle$ cancel and the two $\nu$-derivative contributions combine into $2\sum_\beta\operatorname{tr}(h^{n+1}h^\beta)\langle\Delta^\perp_\rho\nu,\nu_\beta\rangle$. The final expression for $\mathcal D_\nu$ follows from \eqref{eq:drift_laplacian_nu} and \eqref{eq:drift_laplacian_nu_nu_component}.
\end{proof}

\begin{prop}
    Let $x:M^n\to\mathbb R^{n+p}$ be a translator with $|\mathbf H|>0$. Then
    \begin{equation}\label{eq:rho_laplacian_squared_norm_B_I}
        \Delta_\rho|B_I|^2=2\bigl|\nabla^\perp B_I\bigr|^2-2\mathcal R_I-4\sum_{i,j,k}Q_{ijk}C_{ijk}.
    \end{equation}
\end{prop}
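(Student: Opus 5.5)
The most economical route is to subtract: by \eqref{eq:norm_decompositions}, $|B_I|^2=|B|^2-|B_\nu|^2$. Both drift Laplacians are already available, in \eqref{eq:rho_laplacian_squared_norm_B_translators} and \eqref{eq:rho_laplacian_squared_norm_B_nu}. The proof then reduces to rewriting the difference in the stated form.

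\emph{Quartic terms.} By \eqref{eq:splitting_quartic_contractions},
\[
-2\mathcal S-2\mathcal K+2\mathcal K_\nu+4\mathcal S_\nu+2|B_\nu|^4=-2\mathcal S_I-2\mathcal K_I-2\mathcal K_\nu=-2\mathcal R_I .
\]
So the reaction part is immediate.

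\emph{Gradient terms.} These give $2|\nabla^\perp B|^2-2|\nabla h^{n+1}|^2-\mathcal D_\nu$. Write $|\nabla^\perp B|^2=\mathcal N_\nu+\mathcal N_I$ and use \eqref{eq:B_I_id_1} for $|\nabla h^{n+1}|^2$. The difference becomes
\[
2\mathcal N_I-4\langle E,C\rangle-2|C|^2-\mathcal D_\nu .
\]
Inserting \eqref{eq:defect_term_C}, we get
\[
2\mathcal N_I-4\sum h^{n+1}_{ij}\langle h_{ijk},\nabla^\perp_k\nu\rangle+2|B_\nu|^2|\nabla^\perp\nu|^2-4\langle E,C\rangle-2|C|^2+4\sum_k b_k\nabla_k\log|\mathbf H| .
\]
By \eqref{eq:norm_nabla_B_I}, the first three terms equal $2|\nabla^\perp B_I|^2-2|C|^2$. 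Hence the expression is
\[
2|\nabla^\perp B_I|^2-4\bigl(|C|^2+\langle E,C\rangle-\textstyle\sum_k b_k\nabla_k\log|\mathbf H|\bigr),
\]
and the bracket equals $\sum Q_{ijk}C_{ijk}$ by \eqref{eq:Q_C_contraction}. This yields \eqref{eq:rho_laplacian_squared_norm_B_I}.

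The only delicate point is bookkeeping of signs and factors of $2$ in $\mathcal D_\nu$ and in \eqref{eq:norm_nabla_B_I}. In particular, the $-2|C|^2$ coming from $|\nabla h^{n+1}|^2$ must combine with the $|C|^2$ hidden in $|\nabla^\perp B_I|^2$ to produce the $-4|C|^2$ needed for the $Q\cdot C$ term. I would check this coefficient matching carefully; everything else is direct substitution of earlier identities.
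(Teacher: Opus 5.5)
Your proposal is correct and is essentially the paper's proof: subtract \eqref{eq:rho_laplacian_squared_norm_B_nu} from \eqref{eq:rho_laplacian_squared_norm_B_translators}, reduce the quartic terms to $-2\mathcal R_I$ via \eqref{eq:splitting_quartic_contractions}, and regroup the gradient terms with \eqref{eq:B_I_id_1}, \eqref{eq:defect_term_C}, \eqref{eq:norm_nabla_B_I} and \eqref{eq:Q_C_contraction}. The coefficient you flagged also checks out: the $-2|C|^2$ coming from $|\nabla h^{n+1}|^2$ and the $-2|C|^2$ produced when you pass to $2|\nabla^\perp B_I|^2$ combine into the $-4|C|^2$ required by the $Q\cdot C$ term.
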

\begin{proof}
    Subtracting \eqref{eq:rho_laplacian_squared_norm_B_nu} from \eqref{eq:rho_laplacian_squared_norm_B_translators} and using \eqref{eq:splitting_quartic_contractions}, the terms $\pm 4\mathcal S_\nu$ and $\pm2|B_\nu|^4$ cancel, and
    \begin{align*}
        \Delta_\rho|B_I|^2&=2\left(\bigl|\nabla^\perp B\bigr|^2-|\nabla h^{n+1}|^2\right)-2\mathcal R_I-\mathcal D_\nu
        \\&=2\mathcal N_I-4\langle E,C\rangle-2|C|^2-2\mathcal R_I-4\sum_{i,j,k}h^{n+1}_{ij}\langle h_{ijk},\nabla^\perp_k\nu\rangle
        \\&\qquad+2|B_\nu|^2\bigl|\nabla^\perp\nu\bigr|^2+4\sum_kb_k\nabla_k\log|\mathbf H| ,
    \end{align*}
    where the second equality is a consequence of \eqref{eq:gradient_pieces_notation}, \eqref{eq:B_I_id_1} and \eqref{eq:defect_term_C}. Using \eqref{eq:norm_nabla_B_I} and \eqref{eq:Q_C_contraction}, the expression for $\Delta_\rho|B_I|^2$ matches \eqref{eq:rho_laplacian_squared_norm_B_I}.
\end{proof}

\begin{rem}
    Under the notational correspondence established at the beginning of Section \ref{sec:estimates}, equation \eqref{eq:rho_laplacian_squared_norm_B_I} matches exactly \cite[Equation (3.18)]{Naff_Planarity} once the substitution $\partial_t-\Delta=-\Delta_\rho$ justified in Section \ref{sec:translators} is performed. The tensor $Q_{ijk}$ is, by Remark \ref{rem:Q_term_alternative_expression}, the one of \cite[Equation (3.19)]{Naff_Planarity}.
\end{rem}

\section{Algebraic and gradient estimates}\label{sec:estimates}

We collect, in the notation of Section \ref{sec:pinched_submanifolds}, the algebraic and gradient estimates of \cite{Naff_Planarity} specialized to the borderline case where the pinching constant is $\frac{4}{3n}$, which will be used in Section \ref{sec:codim_reduction_n_geq_5}. They are pointwise estimates that do not involve the translator equation. We state them without proof, referring the reader to \cite{Naff_Planarity}. The section closes with the differential inequality satisfied by the pinching function $f$.

The correspondence dictionary between Naff's notation and ours is: $|\langle\hat A_{ij},\hat A_{kl}\rangle|^2=\mathcal S_I$, $|\hat R^\perp|^2=\mathcal K_I$, $|R^\perp_{ij}(\nu_1)|^2=\mathcal K_\nu$, $\langle\hat A_{ij},\nabla^\perp_k\nu_1\rangle=C_{ijk}$ and, using that $\operatorname{tr}\:h^\beta=0$ for $\beta\geq n+2$, $\left|\sum_{i,j=1}^n\mathring h_{ij}\hat A_{ij} \right|^2=\mathcal S_\nu$.

\subsection{Algebraic estimates}

\begin{lem}[\cite{AndrewsBaker_MCFPinchedSubmanifolds}, {\cite[Theorem 1]{LiLi_RigidityMinimalSubmanifoldsSphere}}; {\cite[Lemma 4.1]{Naff_Planarity}}]
    At any point where $|\mathbf H|>0$,
    \begin{equation}\label{eq:naff_lemma_4_1}
        \mathcal S_\nu+\mathcal K_\nu\leq 2|\mathring B_\nu|^2|B_I|^2,
        \qquad
        \mathcal S_I+\mathcal K_I\leq \frac32|B_I|^4 .
    \end{equation}
    In particular, discarding $\mathcal S_\nu\geq 0$,
    \begin{equation}\label{eq:R_I_upper_bound}
        \mathcal R_I\leq\frac32|B_I|^4+2|\mathring B_\nu|^2|B_I|^2 .
    \end{equation}
\end{lem}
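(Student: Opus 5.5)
Both inequalities are pointwise statements about finite families of real symmetric $n\times n$ matrices, so I would prove them by linear algebra at a fixed point $q$ with $|\mathbf H|(q)>0$. The translator equation plays no role. The basic tool is the refined Böttcher--Wenzel inequality for real symmetric matrices,
\begin{equation*}
    |[A,B]|^2\leq 2|A|^2|B|^2-2[\operatorname{tr}(AB)]^2 .
\end{equation*}
I would prove it by diagonalizing $A=\operatorname{diag}(a_1,\dots,a_n)$. Then $|[A,B]|^2=\sum_{i\neq j}(a_i-a_j)^2B_{ij}^2$, and it remains to compare this quadratic form in the entries of $B$ with the right-hand side.

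\emph{First inequality.} Since $\operatorname{tr}h^\beta=0$ for $\beta\geq n+2$, we have $\operatorname{tr}(h^{n+1}h^\beta)=\operatorname{tr}(\mathring h^{n+1}h^\beta)$. Likewise $[h^{n+1},h^\beta]=[\mathring h^{n+1},h^\beta]$, because $h^{n+1}-\mathring h^{n+1}$ is a multiple of the identity. Applying the refined inequality with $A=\mathring h^{n+1}$ and $B=h^\beta$, for each $\beta\geq n+2$ I get
\begin{equation*}
    [\operatorname{tr}(\mathring h^{n+1}h^\beta)]^2+|[\mathring h^{n+1},h^\beta]|^2\leq 2|\mathring B_\nu|^2|h^\beta|^2-[\operatorname{tr}(\mathring h^{n+1}h^\beta)]^2\leq 2|\mathring B_\nu|^2|h^\beta|^2 .
\end{equation*}
Summing over $\beta\geq n+2$ gives $\mathcal S_\nu+\mathcal K_\nu\leq 2|\mathring B_\nu|^2|B_I|^2$.

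\emph{Second inequality.} This is the Li--Li estimate applied to the family $\{h^\alpha\}_{\alpha\geq n+2}$. The quantities $\mathcal S_I$, $\mathcal K_I$ and $|B_I|^2$ are invariant under orthogonal changes of the frame $\{\nu_\alpha\}_{\alpha\geq n+2}$ of $\mathcal I$. I would therefore rotate this frame to diagonalize the Gram matrix, so that $\operatorname{tr}(h^\alpha h^\beta)=\lambda_\alpha\delta_{\alpha\beta}$ with $\lambda_\alpha=|h^\alpha|^2$. Then $\mathcal S_I=\sum_\alpha\lambda_\alpha^2$, while the refined inequality gives $|[h^\alpha,h^\beta]|^2\leq 2\lambda_\alpha\lambda_\beta$ for $\alpha\neq\beta$. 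This yields $\mathcal S_I+\mathcal K_I\leq\sum_\alpha\lambda_\alpha^2+4\sum_{\alpha<\beta}\lambda_\alpha\lambda_\beta$.

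\emph{Main obstacle.} The crude bound $\sum_\alpha\lambda_\alpha^2+4\sum_{\alpha<\beta}\lambda_\alpha\lambda_\beta$ is not enough: it equals $2(\sum\lambda_\alpha)^2-\sum\lambda_\alpha^2$, which can exceed $\tfrac32(\sum\lambda_\alpha)^2$ when three or more $\lambda_\alpha$ are comparable. Here I would follow Li--Li and order $\lambda_1\geq\lambda_2\geq\cdots$. They show that equality in the commutator bound forces strong structure, in which the two matrices essentially live on a common $2\times 2$ block. This structure cannot hold simultaneously for all pairs among three or more nonzero matrices. Concretely, for fixed $\alpha$ one bounds $\sum_{\beta\neq\alpha}|[h^\alpha,h^\beta]|^2$ more sharply, and Li--Li obtain $\sum_{\alpha\neq\beta}|[h^\alpha,h^\beta]|^2\leq 4\lambda_1\lambda_2+\ldots$ with the remaining terms absorbed. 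The final elementary step is to check that the resulting expression is at most $\tfrac32(\sum\lambda_\alpha)^2$, with equality only when exactly two $\lambda_\alpha$ are nonzero and equal. I would cite \cite[Theorem 1]{LiLi_RigidityMinimalSubmanifoldsSphere} for this sharp step rather than redo it. Finally, \eqref{eq:R_I_upper_bound} follows by adding the two inequalities in \eqref{eq:naff_lemma_4_1} and discarding $\mathcal S_\nu\geq0$.
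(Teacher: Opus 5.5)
Your proposal is correct and matches the paper. The paper gives no argument of its own: it defers to \cite{AndrewsBaker_MCFPinchedSubmanifolds} and \cite[Lemma 4.1]{Naff_Planarity} for the first bound and to \cite[Theorem 1]{LiLi_RigidityMinimalSubmanifoldsSphere} for the second, exactly as you do. Your direct derivation of the first bound is sound: after diagonalizing $A$, the off-diagonal entries are controlled by $(a_i-a_j)^2\leq 2|A|^2$ and the diagonal ones by Cauchy--Schwarz, which even gives the slightly stronger $2\mathcal S_\nu+\mathcal K_\nu\leq 2|\mathring B_\nu|^2|B_I|^2$; your heuristic account of the Li--Li mechanism is unnecessary, since, like the paper, you ultimately cite that theorem.
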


\begin{lem}[{\cite[(4.13)]{Naff_Planarity}}]\label{lem:non_negativity_Rf}
    Under the pinching assumption \eqref{eq:main_pinching}, at every point where $|\mathbf H|>0$ the reaction term $\mathcal R_f$ satisfies
    \begin{equation}\label{eq:reaction_lower_bound_f}
        \mathcal R_f\geq 6f|B_I|^2+4f|\mathring B_\nu|^2+3f^2\geq 0 .
    \end{equation}
    Moreover $\mathcal R_f=0$ at every point where $|\mathbf H|=0$.
\end{lem}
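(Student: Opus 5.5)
The plan is to expand $\mathcal R_f$ into the adapted-frame quantities of Section \ref{sec:pinched_submanifolds}, bound the quartic terms with \eqref{eq:naff_lemma_4_1}, and then reduce everything to a polynomial in $f$, $|\mathring B_\nu|^2$ and $|B_I|^2$ using the algebraic identity \eqref{eq:algebraic_identity}. No differential information enters; the statement is purely pointwise.

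\emph{Points where $|\mathbf H|=0$.} The pinching \eqref{eq:main_pinching} forces $B=0$ there. Hence $S_{\mathbf H}$, $\mathcal S$ and $\mathcal K$ all vanish, and so $\mathcal R_f=0$.

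\emph{Rewriting $\mathcal R_f$ where $|\mathbf H|>0$.} We work in the adapted frame, where $S_{\mathbf H}=|\mathbf H|\,h^{n+1}$, so that $|S_{\mathbf H}|^2=|\mathbf H|^2|B_\nu|^2$. Combining this with the splittings \eqref{eq:splitting_quartic_contractions} gives
\begin{equation*}
\mathcal R_f=\frac{4}{3n}|\mathbf H|^2|B_\nu|^2-|B_\nu|^4-2(\mathcal S_\nu+\mathcal K_\nu)-(\mathcal S_I+\mathcal K_I).
\end{equation*}
Inserting both bounds of \eqref{eq:naff_lemma_4_1} yields
\begin{equation*}
\mathcal R_f\geq\frac{4}{3n}|\mathbf H|^2|B_\nu|^2-|B_\nu|^4-4|\mathring B_\nu|^2|B_I|^2-\frac32|B_I|^4 .
\end{equation*}

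\emph{Reducing to a polynomial.} Set $x\doteq|\mathring B_\nu|^2$, $y\doteq|B_I|^2$ and $a\doteq\frac{1}{3n}|\mathbf H|^2$. By \eqref{eq:algebraic_identity}, $a=f+x+y$, and by \eqref{eq:norm_decompositions}, $|B_\nu|^2=x+\frac1n|\mathbf H|^2=x+3a$.

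The first two terms then combine as
\begin{equation*}
4a(x+3a)-(x+3a)^2=3a^2-2ax-x^2 .
\end{equation*}
Substituting $a=f+x+y$ and expanding, the $x^2$ terms cancel. The $4xy$ produced by the expansion also cancels the $-4xy$ coming from the Li--Li type bound. What remains is
\begin{equation*}
\mathcal R_f\geq 3f^2+4fx+6fy+\tfrac32 y^2\geq 6f|B_I|^2+4f|\mathring B_\nu|^2+3f^2 .
\end{equation*}
The last expression is nonnegative by \eqref{eq:f_upper_bound}, which proves \eqref{eq:reaction_lower_bound_f}.

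The only delicate step is the bookkeeping in this expansion. In particular, one must check that the factor $2$ in front of $\mathcal S_\nu+\mathcal K_\nu$ produces exactly $-4xy$, so that it is absorbed by the $+4xy$ from $3a^2-2ax$. Everything else is routine.
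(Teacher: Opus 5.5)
Your proof is correct. With $a=f+x+y$ one gets $3a^2-2ax-x^2=3f^2+4fx+6fy+3y^2+4xy$, and the bounds \eqref{eq:naff_lemma_4_1}, applied with the factor $2$ from \eqref{eq:splitting_quartic_contractions}, remove exactly $4xy+\frac32 y^2$; the case $|\mathbf H|=0$ is also handled properly.

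The paper gives no proof of its own and simply cites \cite[(4.13)]{Naff_Planarity}. Your derivation is the standard computation behind that estimate: split the quartic terms, apply the Andrews--Baker/Li--Li bounds, substitute \eqref{eq:algebraic_identity}, and discard the harmless leftover $\frac32|B_I|^4$.
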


\subsection{Gradient estimates}

\begin{lem}[\cite{Huisken_FlowConvexSurfacesIntoSpheres}]
    For any immersed submanifold $M^n$ of $\mathbb R^{n+p}$, the gradient estimate
    \begin{equation}\label{eq:gradient_estimate_huisken}
        \bigl|\nabla^\perp B\bigr|^2\geq\frac{3}{n+2}\bigl|\nabla^\perp \mathbf H\bigr|^2
    \end{equation}
    holds pointwise.
\end{lem}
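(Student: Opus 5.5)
The plan is the classical Huisken argument: view $\nabla^\perp B$ as a section of $\mathrm{Sym}^3T^*M\otimes NM$ (fully symmetric in $i,j,k$ by Codazzi's equation \eqref{eq:codazzi_equation_general}). Then project it orthogonally onto the subspace of tensors built from its own trace $\nabla^\perp\mathbf H$. The estimate \eqref{eq:gradient_estimate_huisken} is purely pointwise and algebraic, so the translator equation plays no role. We may also treat each normal direction $\nu_\alpha$ separately and sum at the end.

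Fix $\alpha$ and write $T_{ijk}\doteq h^\alpha_{ijk}$, which is totally symmetric, and $t_k\doteq\sum_i T_{iik}=\langle\nabla^\perp_k\mathbf H,\nu_\alpha\rangle$ by \eqref{eq:codazzi_traces}. Define the symmetric tensor
\begin{equation*}
    F_{ijk}\doteq \frac{1}{n+2}\left(\delta_{ij}t_k+\delta_{ik}t_j+\delta_{jk}t_i\right).
\end{equation*}
A direct computation shows that $\sum_iF_{iik}=\frac{1}{n+2}(n t_k+t_k+t_k)=t_k$. Hence $T-F$ is totally symmetric and trace-free in every pair of indices.

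The key step is the orthogonality $\langle T-F,F\rangle=0$. Expanding $\langle T-F,F\rangle$, each of the three summands of $F$ contracts $T-F$ along a pair of indices against $t$. Each such contraction vanishes by trace-freeness and symmetry. Therefore
\begin{equation*}
    |T|^2=|T-F|^2+|F|^2\geq |F|^2.
\end{equation*}

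It remains to compute $|F|^2$. Expanding the square gives three diagonal terms, each equal to $n|t|^2$, and six cross terms, each equal to $|t|^2$. So
\begin{equation*}
    |F|^2=\frac{(3n+6)|t|^2}{(n+2)^2}=\frac{3}{n+2}|t|^2.
\end{equation*}

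Summing over $\alpha$ yields $|\nabla^\perp B|^2\geq\frac{3}{n+2}|\nabla^\perp\mathbf H|^2$. The only delicate point is the bookkeeping in the orthogonality and norm computations, which is routine. Equality holds exactly when $\nabla^\perp B$ equals its trace part $F$.
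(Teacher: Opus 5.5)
Your proof is correct and is exactly the classical argument of \cite{Huisken_FlowConvexSurfacesIntoSpheres}, which the paper cites without reproducing. You split each totally symmetric $h^\alpha_{ijk}$ into the symmetrized trace part $F$ and a remainder that is trace-free in every pair and orthogonal to $F$, and then compute $|F|^2=\frac{3}{n+2}|t|^2$; the trace, orthogonality and norm computations and the sum over $\alpha$ giving $|\nabla^\perp\mathbf H|^2$ all check out.
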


\begin{lem}[{\cite[(4.19)--(4.22)]{Naff_Planarity}}]
    Let $|\mathbf H|>0$. Then
    \begin{align}
        \frac{3}{n+2}\bigl|\nabla|\mathbf H|\bigr|^2&\leq \mathcal N_\nu, &
        \frac{3}{n+2}|\mathbf H|^2\bigl|\nabla^\perp\nu\bigr|^2&\leq \mathcal N_I, \label{eq:naff_4_19_4_20}\\
        \frac{2(n-1)}{n(n+2)}\bigl|\nabla|\mathbf H|\bigr|^2&\leq |E|^2, &
        \frac{2(n-1)}{n(n+2)}|\mathbf H|^2\bigl|\nabla^\perp\nu\bigr|^2&\leq |E_I|^2 .
        \label{eq:naff_4_21_4_22}
    \end{align}
\end{lem}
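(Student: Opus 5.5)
The plan is to derive all four inequalities from one pointwise Kato-type decomposition of $\nabla^\perp B$ into a trace part and a trace-free part. We exploit the total symmetry of $h^\alpha_{ijk}$ from Codazzi's equation \eqref{eq:codazzi_equation_general}, one normal direction at a time. For a fixed $\alpha$, the traces are given by \eqref{eq:codazzi_traces}: set $w_k\doteq\sum_i h^\alpha_{iik}$, so that $w=\nabla|\mathbf H|$ when $\alpha=n+1$ and $w=|\mathbf H|\sigma^\alpha$ when $\alpha\geq n+2$. Write
\begin{equation*}
h^\alpha_{ijk}=\frac{1}{n+2}\bigl(\delta_{ij}w_k+\delta_{ik}w_j+\delta_{jk}w_i\bigr)+T^\alpha_{ijk}.
\end{equation*}
Here $T^\alpha$ is totally symmetric and totally trace-free, since the trace of the bracket over $i=j$ equals $(n+2)w_k$. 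The two summands are then orthogonal. A direct count gives that the squared norm of the symmetrized $\delta\otimes w$ term is $\frac{3}{n+2}|w|^2$. Hence
\begin{equation*}
\sum_{i,j,k}(h^\alpha_{ijk})^2\geq\frac{3}{n+2}|w|^2.
\end{equation*}
Taking $\alpha=n+1$ gives the first inequality in \eqref{eq:naff_4_19_4_20}. Summing over $\alpha\geq n+2$ and using $\sum_{\alpha\geq n+2}\sum_k(\sigma^\alpha_k)^2=|\nabla^\perp\nu|^2$ gives the second. This is Huisken's argument behind \eqref{eq:gradient_estimate_huisken}, applied componentwise in the adapted frame.

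For \eqref{eq:naff_4_21_4_22} we combine the above with Lemma \ref{lem:B_I_elementary_identities}(ii). By \eqref{eq:E_E_I_norms},
\begin{equation*}
|E|^2=\mathcal N_\nu-\frac1n\bigl|\nabla|\mathbf H|\bigr|^2\geq\Bigl(\frac{3}{n+2}-\frac1n\Bigr)\bigl|\nabla|\mathbf H|\bigr|^2,
\end{equation*}
and $\frac{3}{n+2}-\frac1n=\frac{3n-n-2}{n(n+2)}=\frac{2(n-1)}{n(n+2)}$. The same computation, starting from the expression for $|E_I|^2$ in \eqref{eq:E_E_I_norms} and the second bound in \eqref{eq:naff_4_19_4_20}, gives the $\mathcal I$-version.

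The only step needing care is the orthogonality and the norm count in the decomposition. We check orthogonality by contracting $T^\alpha$ against each $\delta$-term, which vanishes by trace-freeness. For the norm, we expand $|\delta_{ij}w_k+\delta_{ik}w_j+\delta_{jk}w_i|^2$ into three diagonal contributions of $n|w|^2$ each and six cross contributions of $|w|^2$ each, for a total of $3(n+2)|w|^2$. Dividing by $(n+2)^2$ gives $\frac{3}{n+2}|w|^2$. No translator equation is needed, so the estimates are purely pointwise on $\{|\mathbf H|>0\}$.
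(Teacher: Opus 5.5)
Your proof is correct and is essentially the argument behind the cited estimates. The paper states this lemma without proof and refers to Naff, whose (4.19)--(4.20) come from the same Huisken-type splitting of the totally symmetric tensor $h^\alpha_{ijk}$ into its trace part and a totally trace-free part, one normal direction at a time in the adapted frame; your derivation of \eqref{eq:naff_4_21_4_22} from \eqref{eq:naff_4_19_4_20} via \eqref{eq:E_E_I_norms} and $\frac{3}{n+2}-\frac1n=\frac{2(n-1)}{n(n+2)}$ is exactly the remark the paper makes right after the lemma.
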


\begin{rem}
    By \eqref{eq:E_E_I_norms} the two inequalities in \eqref{eq:naff_4_21_4_22} are equivalent to those in \eqref{eq:naff_4_19_4_20}, since $\frac{2(n-1)}{n(n+2)}+\frac1n=\frac{3}{n+2}$. Both forms will be used in Section \ref{sec:codim_reduction_n_geq_5}.
\end{rem}

\subsection{The differential inequality for the pinching function}

\begin{prop}\label{prop:rho_laplacian_f}
    Let $x:M^n\to\mathbb R^{n+p}$ be a connected translator without boundary satisfying the assumption \eqref{eq:main_pinching}. Then the pinching function $f$ of \eqref{eq:pinching_function} satisfies
    \begin{equation}\label{eq:rho_laplacian_f}
        \Delta_\rho f\leq-\frac{2(5n-8)}{3n(n+2)}|\nabla^\perp\mathbf H|^2-2\mathcal R_f\leq0 .
    \end{equation}
    Consequently, either $f\equiv0$ or $f>0$ everywhere on $M^n$. In the former case, $|\mathbf H|\equiv 0$ and $B\equiv 0$. In the latter case, $|\mathbf H|>0$ everywhere.
\end{prop}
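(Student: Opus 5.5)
By Corollary \ref{cor:simons_squared_norms_and_f}(iii) we have $\Delta_\rho f=-2\mathcal R_f-2\mathcal G_f$. The plan is to bound the gradient term $\mathcal G_f$ from below, deduce the differential inequality, and then conclude with the strong maximum principle.

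\emph{Gradient term.} Huisken's estimate \eqref{eq:gradient_estimate_huisken} gives
\begin{equation*}
\mathcal G_f=\bigl|\nabla^\perp B\bigr|^2-\tfrac{4}{3n}|\nabla^\perp\mathbf H|^2\geq\left(\tfrac{3}{n+2}-\tfrac{4}{3n}\right)|\nabla^\perp\mathbf H|^2 .
\end{equation*}
A direct computation yields
\begin{equation*}
\tfrac{3}{n+2}-\tfrac{4}{3n}=\tfrac{9n-4n-8}{3n(n+2)}=\tfrac{5n-8}{3n(n+2)}>0 \qquad\text{for } n\geq2.
\end{equation*}
This estimate is pointwise and needs no assumption on $|\mathbf H|$, so it holds everywhere on $M$.

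\emph{Reaction term.} On $\{|\mathbf H|>0\}$, Lemma \ref{lem:non_negativity_Rf} gives $\mathcal R_f\geq0$. At points where $|\mathbf H|=0$ the pinching forces $B=0$, so $\mathcal S=\mathcal K=0$ and $|S_{\mathbf H}|=0$. Hence $\mathcal R_f=0$ there, as the same lemma states. Putting the two bounds together gives \eqref{eq:rho_laplacian_f} on all of $M$.

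\emph{Dichotomy.} The function $f$ is smooth and nonnegative by \eqref{eq:f_upper_bound}, and it is $\Delta_\rho$-superharmonic. The operator $\Delta_\rho$ is linear and elliptic with smooth first-order coefficient $\vec v^\top$. By the strong maximum principle (Hopf) for supersolutions attaining an interior minimum, if $f$ vanishes at some point then $f\equiv0$ on the connected manifold $M$. Otherwise $f>0$ everywhere.

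In the case $f\equiv0$, \eqref{eq:f_upper_bound} gives $\varepsilon_0|\mathbf H|^2\leq f=0$, so $|\mathbf H|\equiv0$, and then the pinching yields $B\equiv0$. In the case $f>0$, the upper bound $f\leq\frac1{3n}|\mathbf H|^2$ forces $|\mathbf H|>0$ everywhere.

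The only delicate point is that $\mathcal R_f$ and the lemma's lower bound are stated in the adapted frame, which requires $|\mathbf H|>0$. This is handled by the observation above that everything vanishes where $\mathbf H=0$. The quantity $\mathcal R_f$ itself is frame-independent, so no smoothness issue arises.
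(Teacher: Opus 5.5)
Your proposal is correct and follows the paper's proof step by step: the identity $\Delta_\rho f=-2(\mathcal R_f+\mathcal G_f)$, Huisken's estimate to bound $\mathcal G_f$, Lemma \ref{lem:non_negativity_Rf} for $\mathcal R_f\geq0$, and the strong maximum principle for the dichotomy. The only cosmetic difference is that you get $|\mathbf H|>0$ from $f\leq\frac{1}{3n}|\mathbf H|^2$, where the paper uses $|\mathbf H|^2\geq\frac{3n}{4}f$; both are equally valid.
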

\begin{proof}
    By Corollary \ref{cor:simons_squared_norms_and_f}, $\Delta_\rho f=-2(\mathcal R_f+\mathcal G_f)$. The gradient estimate \eqref{eq:gradient_estimate_huisken} gives
    \begin{equation*}
        \mathcal G_f=\bigl|\nabla^\perp B\bigr|^2-\frac{4}{3n}|\nabla^\perp\mathbf H|^2\geq\left(\frac{3}{n+2}-\frac{4}{3n}\right)|\nabla^\perp\mathbf H|^2=\frac{5n-8}{3n(n+2)}|\nabla^\perp\mathbf H|^2\geq 0 ,
    \end{equation*}
    which proves the first inequality in \eqref{eq:rho_laplacian_f}. Moreover, $\mathcal R_f\geq 0$ by Lemma \ref{lem:non_negativity_Rf}, which proves the second inequality in \eqref{eq:rho_laplacian_f}.

    Since $f\geq0$ and $\Delta_\rho f\leq0$, as a consequence of the strong maximum principle for elliptic operators (see e.g.\ \cite[Theorem 3.5]{GilbardTrudinger_EllipticPDEs}), either $f\equiv 0$ or $f>0$ everywhere on $M$. In the first case, $0\equiv f\geq \varepsilon_0|\mathbf H|^2\geq 0$, hence $|\mathbf H|\equiv0$ and $B\equiv0$ by the pinching condition. If $f>0$ then $|\mathbf H|^2\geq\frac{3n}{4}f>0$.
\end{proof}

\section{Codimension reduction in dimensions \texorpdfstring{$n\geq 5$}{n>=5}}\label{sec:codim_reduction_n_geq_5}

Throughout this section $n\geq5$ and $x:M^n\to\mathbb R^{n+p}$ is a complete translator satisfying \eqref{eq:main_pinching}. 
We fix
\begin{equation*}
    \theta\doteq\frac56 .
\end{equation*}
For every $n\geq5$, this value satisfies the conditions \eqref{eq:condition_C1_new}--\eqref{eq:condition_C3_new} of Appendix \ref{sec:algebraic_lemma_5_6}, see Remark \ref{rem:conditions_C123_verified}. Where $|\mathbf H|>0$, the function $u=|B_I|^2/f$ is defined, and we set
\begin{equation*}
    \omega\doteq f u^{\frac{1}{1-\theta}}=\frac{|B_I|^{12}}{f^{5}} .
\end{equation*}
Since $|B_I|^2$ and $f>0$ are smooth on $\{|\mathbf H|>0\}$, so is $\omega$. At the points where, moreover, $|B_I|>0$, we set
\begin{equation*}
    \zeta\doteq(1-\theta)\nabla\log\omega=\nabla\log|B_I|^2-\theta\nabla\log f .
\end{equation*}
Finally, we regard the quantity $b_k=\sum_{i,j}\mathring h^{n+1}_{ij}C_{ijk}$ of Lemma \ref{lem:B_I_elementary_identities}(iii) as the vector field $b\doteq\sum_kb_ke_k$, so that $\langle\nabla\log|\mathbf H|,b\rangle=\sum_kb_k\nabla_k\log|\mathbf H|$.

\subsection{Lee-Naff-Zhu approach}

Substituting \eqref{eq:rho_laplacian_squared_norm_B_I} and \eqref{eq:rho_laplacian_f_identity}, at every point where $|\mathbf H|>0$,
\begin{equation}\label{eq:interpolated_expansion}
    \Delta_\rho|B_I|^2-\theta u \Delta_\rho f
    =2\left[\theta u \mathcal R_f-\mathcal R_I\right]
    +2\Bigl[\bigl|\nabla^\perp B_I\bigr|^2+\theta u \mathcal G_f-2\sum_{i,j,k}Q_{ijk}C_{ijk}\Bigr].
\end{equation}

In \cite[Appendix B.2]{LeeNaffZhu_Planarity} the two brackets of \eqref{eq:interpolated_expansion} are estimated separately and shown to be nonnegative pointwise. The term $2\langle\nabla\log|\mathbf H|,b\rangle$ contained in $-2\sum_{i,j,k}Q_{ijk}C_{ijk}$ is absorbed by Young's inequality, see \cite[(B.3)--(B.4)]{LeeNaffZhu_Planarity}. When $\theta<1$, the same argument closes if
\begin{equation*}
    r\doteq\frac{n+2}{3(n-1)}\leq\frac{\theta}{1+\theta}.
\end{equation*}
This needs $r<\frac12$, which holds if and only if $n\geq8$. If $\frac{4}{3n}$ is replaced by a pinching constant $c\in\left(\frac1n,\frac{4}{3n}\right]$, then $r=\frac{(n+2)(nc-1)}{n-1}$, and $r<\frac12$ becomes $c<\frac{3(n+1)}{2n(n+2)}$. This is the constant of \cite[Appendix B.2]{LeeNaffZhu_Planarity} entering $c_n^{\mathrm{Naff}}$. 

The argument below never requires the gradient bracket of \eqref{eq:interpolated_expansion} to be nonnegative.
We keep the gradient terms of \eqref{eq:interpolated_expansion} and those produced by the logarithmic derivative of $\omega$ together, in the quantity $\mathcal G_\zeta$ of \eqref{eq:G_zeta_definition}. We only prove the lower bound $\mathcal G_\zeta\geq-\widetilde C|B_I|^2|\zeta|^2$ at the points where $u\geq \underline u$ (Lemma \ref{lem:gradient_estimate_5_6}). This is enough, because $u\geq \underline u$ and $|\zeta|\to0$ along the Omori--Yau sequence. In the proof of this bound, the term $2\langle\nabla\log|\mathbf H|,b\rangle$ is rewritten by differentiating \eqref{eq:algebraic_identity}, and the term $\theta(1-\theta)|\nabla\log f|^2$ is kept.

\subsection{Reaction and gradient terms lower bounds}

\begin{lem}\label{lem:reaction_inequality_n_geq_7}
    At every point where $|\mathbf H|>0$,
    \begin{equation}\label{eq:reaction_inequality_n_geq_7}
        \theta u \mathcal R_f-\mathcal R_I
        \geq\frac72|B_I|^4+\frac43|\mathring B_\nu|^2|B_I|^2
        \geq\frac72|B_I|^4 .
    \end{equation}
\end{lem}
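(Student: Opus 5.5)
The inequality follows by combining the lower bound \eqref{eq:reaction_lower_bound_f} for $\mathcal R_f$ with the upper bound \eqref{eq:R_I_upper_bound} for $\mathcal R_I$, using the relation $uf=|B_I|^2$ to turn powers of $f$ into powers of $|B_I|^2$. No gradient terms or translator identities are needed; the statement is purely algebraic.

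\emph{Setup.} Fix a point where $|\mathbf H|>0$. By \eqref{eq:f_upper_bound}, $f\geq\varepsilon_0|\mathbf H|^2>0$ there, so $u=|B_I|^2/f\geq0$ is well defined. Since $\theta u\geq 0$, multiplying \eqref{eq:reaction_lower_bound_f} by $\theta u$ preserves the inequality.

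\emph{Lower bound for $\theta u\,\mathcal R_f$.} Substituting $uf=|B_I|^2$ gives
\begin{equation*}
    \theta u\,\mathcal R_f\geq \theta\bigl(6|B_I|^4+4|\mathring B_\nu|^2|B_I|^2+3f|B_I|^2\bigr).
\end{equation*}
With $\theta=\frac56$ this reads
\begin{equation*}
    \theta u\,\mathcal R_f\geq 5|B_I|^4+\tfrac{10}{3}|\mathring B_\nu|^2|B_I|^2+\tfrac52 f|B_I|^2 .
\end{equation*}

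\emph{Subtracting $\mathcal R_I$.} Using \eqref{eq:R_I_upper_bound}, $\mathcal R_I\leq\frac32|B_I|^4+2|\mathring B_\nu|^2|B_I|^2$, we obtain
\begin{equation*}
    \theta u\,\mathcal R_f-\mathcal R_I\geq \tfrac72|B_I|^4+\tfrac43|\mathring B_\nu|^2|B_I|^2+\tfrac52 f|B_I|^2 .
\end{equation*}

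\emph{Conclusion.} The last term $\frac52 f|B_I|^2$ is nonnegative because $f>0$, so it can be dropped; this gives the first inequality in \eqref{eq:reaction_inequality_n_geq_7}. The second inequality follows by dropping the nonnegative term $\frac43|\mathring B_\nu|^2|B_I|^2$.

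The only points requiring care are the positivity of $f$ and of $u$, which justify multiplying by $\theta u$ and discarding the $f$-term. Both follow from the uniform pinching bound \eqref{eq:f_upper_bound} at points with $|\mathbf H|>0$. No step is a genuine obstacle.
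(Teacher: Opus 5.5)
Your proposal is correct and follows essentially the same route as the paper: you multiply the lower bound \eqref{eq:reaction_lower_bound_f} by $\theta u$, use $uf=|B_I|^2$ with $\theta=\frac56$, subtract the upper bound \eqref{eq:R_I_upper_bound}, and discard the nonnegative terms. The only cosmetic difference is that the paper drops $\frac52 f|B_I|^2$ before subtracting $\mathcal R_I$, while you drop it afterwards.
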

\begin{proof}
    Multiplying \eqref{eq:reaction_lower_bound_f} by $\theta u$, using $uf=|B_I|^2$ and discarding the term $\frac52f|B_I|^2\geq 0$,
    \begin{equation*}
        \theta u \mathcal R_f\geq5|B_I|^4+\frac{10}{3}|\mathring B_\nu|^2|B_I|^2 .
    \end{equation*}
    Subtracting \eqref{eq:R_I_upper_bound} gives \eqref{eq:reaction_inequality_n_geq_7}.
\end{proof}

\begin{lem}\label{lem:LNZ_B1_B2}
    At every point where $|\mathbf H|>0$,
    \begin{align}
        \bigl|\nabla^\perp B_I\bigr|^2&\geq|C|^2+\left[\frac{3(n-1)}{n+2}\left(f+|B_I|^2\right)+\frac{2n-5}{n+2}|\mathring B_\nu|^2\right]\bigl|\nabla^\perp\nu\bigr|^2,\label{eq:gradient_1_5_6}\\
        u \mathcal G_f&\geq\frac{5n-8}{n+2}\left[(1+u)|B_I|^2+u|\mathring B_\nu|^2\right]\bigl|\nabla^\perp\nu\bigr|^2+\frac{5n-8}{6(n-1)} u |E|^2 .\label{eq:gradient_2_5_6}
    \end{align}
\end{lem}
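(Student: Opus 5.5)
We prove the two inequalities separately. Both are pointwise: they use only the decompositions of Lemma \ref{lem:B_I_elementary_identities}, the gradient estimates \eqref{eq:naff_4_19_4_20}--\eqref{eq:naff_4_21_4_22} and the algebraic identity \eqref{eq:algebraic_identity}, which lets us replace $\frac1{3n}|\mathbf H|^2$ by $f+|B_I|^2+|\mathring B_\nu|^2$.

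For \eqref{eq:gradient_1_5_6}, start from $|\nabla^\perp B_I|^2=|C|^2+|D|^2$ and use $D^\gamma=E^\gamma-\mathring h^{n+1}\sigma^\gamma$ from Lemma \ref{lem:B_I_elementary_identities}(ii). This gives
\[
|D|^2=|E_I|^2-2\sum E^\gamma_{ijk}\mathring h^{n+1}_{ij}\sigma^\gamma_k+|\mathring B_\nu|^2|\nabla^\perp\nu|^2 .
\]
The cross term is handled by Young's inequality with a parameter $\lambda>0$:
\[
2\Bigl|\sum E^\gamma_{ijk}\mathring h^{n+1}_{ij}\sigma^\gamma_k\Bigr|\le \lambda|E_I|^2+\lambda^{-1}|\mathring B_\nu|^2|\nabla^\perp\nu|^2 .
\]
This leaves $(1-\lambda)|E_I|^2+(1-\lambda^{-1})|\mathring B_\nu|^2|\nabla^\perp\nu|^2$. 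Next bound $|E_I|^2$ from below by \eqref{eq:naff_4_21_4_22} and convert $|\mathbf H|^2$ via \eqref{eq:algebraic_identity}. The coefficient of $|\nabla^\perp\nu|^2$ becomes
\[
(1-\lambda)\frac{6(n-1)}{n+2}\bigl(f+|B_I|^2+|\mathring B_\nu|^2\bigr)+(1-\lambda^{-1})|\mathring B_\nu|^2 .
\]
The choice $\lambda=\tfrac12$ gives exactly $\frac{3(n-1)}{n+2}(f+|B_I|^2)$ plus $\bigl(\frac{3(n-1)}{n+2}-1\bigr)|\mathring B_\nu|^2=\frac{2n-5}{n+2}|\mathring B_\nu|^2$, which is \eqref{eq:gradient_1_5_6}. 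The only thing to check is that the cross term has the stated form; it does, since $E_I$ pairs against $\mathring h^{n+1}\sigma$ componentwise.

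For \eqref{eq:gradient_2_5_6}, split $\mathcal G_f=\mathcal N_\nu+\mathcal N_I-\frac4{3n}\bigl(|\nabla|\mathbf H||^2+|\mathbf H|^2|\nabla^\perp\nu|^2\bigr)$ using \eqref{eq:nabla_perp_H_split}. We treat the $\nu$-part and the $\mathcal I$-part separately.

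For the $\mathcal I$-part, \eqref{eq:naff_4_19_4_20} gives $\mathcal N_I-\frac4{3n}|\mathbf H|^2|\nabla^\perp\nu|^2\ge \frac{5n-8}{3n(n+2)}|\mathbf H|^2|\nabla^\perp\nu|^2$. Writing $\frac{|\mathbf H|^2}{3n}=f+|B_I|^2+|\mathring B_\nu|^2$ turns this into
\[
\frac{5n-8}{n+2}\bigl(f+|B_I|^2+|\mathring B_\nu|^2\bigr)|\nabla^\perp\nu|^2 .
\]
Multiplying by $u$ and using $uf=|B_I|^2$ yields $\frac{5n-8}{n+2}\bigl[(1+u)|B_I|^2+u|\mathring B_\nu|^2\bigr]|\nabla^\perp\nu|^2$.

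For the $\nu$-part, use \eqref{eq:E_E_I_norms} to write $\mathcal N_\nu-\frac4{3n}|\nabla|\mathbf H||^2=|E|^2-\frac1{3n}|\nabla|\mathbf H||^2$. Then \eqref{eq:naff_4_21_4_22} gives $|\nabla|\mathbf H||^2\le\frac{n(n+2)}{2(n-1)}|E|^2$, so this quantity is at least
\[
\Bigl(1-\frac{n+2}{6(n-1)}\Bigr)|E|^2=\frac{5n-8}{6(n-1)}|E|^2 .
\]
Multiplying by $u\ge0$ and adding the two parts gives \eqref{eq:gradient_2_5_6}.

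The only delicate step is the Young parameter in the first inequality. The constants $\frac{3(n-1)}{n+2}$ and $\frac{2n-5}{n+2}$ pin it down to $\lambda=\tfrac12$. If the statement had needed a sharper coefficient, one would instead have to exploit the trace-freeness of $E_I$ against the $\sigma$-part.
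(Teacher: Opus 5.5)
Your proof is correct and follows the paper's argument essentially line by line. Young's inequality with $\lambda=\tfrac12$ is exactly the elementary bound $|\Xi-\Psi|^2\geq\tfrac12|\Xi|^2-|\Psi|^2$ that the paper applies to $|D|^2$, and your treatment of $\mathcal G_f$ via \eqref{eq:E_E_I_norms}, \eqref{eq:naff_4_19_4_20}, \eqref{eq:naff_4_21_4_22} and $uf=|B_I|^2$ matches the paper's.
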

\begin{proof}
    Throughout the proof we use \eqref{eq:algebraic_identity} in the form $|\mathbf H|^2=3n(f+|B_I|^2+|\mathring B_\nu|^2)$.

    By Lemma \ref{lem:B_I_elementary_identities}, $\bigl|\nabla^\perp B_I\bigr|^2=|C|^2+|D|^2$ and $E^\gamma_{ijk}-D^\gamma_{ijk}=\mathring h^{n+1}_{ij}\sigma^\gamma_k$. The elementary inequality $|\Xi-\Psi|^2\geq\frac12|\Xi|^2-|\Psi|^2$ gives $|D|^2\geq\frac12|E_I|^2-|\mathring B_\nu|^2\bigl|\nabla^\perp\nu\bigr|^2$. Bounding $|E_I|^2$ from below by the second inequality in \eqref{eq:naff_4_21_4_22}, and using $\frac{n-1}{n(n+2)}|\mathbf H|^2=\frac{3(n-1)}{n+2}(f+|B_I|^2+|\mathring B_\nu|^2)$ and $\frac{3(n-1)}{n+2}-1=\frac{2n-5}{n+2}$, yields \eqref{eq:gradient_1_5_6}. This is $|C|^2$ plus one half of \cite[(4.23)]{Naff_Planarity}, and it is \cite[(B.1)]{LeeNaffZhu_Planarity} for the pinching constant $\frac{4}{3n}$.

    By \eqref{eq:gradient_pieces_notation}, \eqref{eq:nabla_perp_H_split} and the first identity in \eqref{eq:E_E_I_norms},
    \begin{equation*}
        \mathcal G_f=|E|^2-\frac{1}{3n}\bigl|\nabla|\mathbf H|\bigr|^2+\mathcal N_I-\frac{4}{3n}|\mathbf H|^2\bigl|\nabla^\perp\nu\bigr|^2 .
    \end{equation*}
    The first inequality in \eqref{eq:naff_4_21_4_22} gives $\frac{1}{3n}\bigl|\nabla|\mathbf H|\bigr|^2\leq\frac{n+2}{6(n-1)}|E|^2$, and $1-\frac{n+2}{6(n-1)}=\frac{5n-8}{6(n-1)}$. The second inequality in \eqref{eq:naff_4_19_4_20} gives
    \begin{equation*}
        \mathcal N_I-\frac{4}{3n}|\mathbf H|^2\bigl|\nabla^\perp\nu\bigr|^2\geq\frac{5n-8}{3n(n+2)}|\mathbf H|^2\bigl|\nabla^\perp\nu\bigr|^2=\frac{5n-8}{n+2}\left(f+|B_I|^2+|\mathring B_\nu|^2\right)\bigl|\nabla^\perp\nu\bigr|^2 .
    \end{equation*}
    Multiplying by $u$ and using $uf=|B_I|^2$ yields \eqref{eq:gradient_2_5_6}, which is \cite[(B.2)]{LeeNaffZhu_Planarity} for the pinching constant $\frac{4}{3n}$.
\end{proof}

Besides the vector field $b$, we set $\lambda_k\doteq\sum_{i,j}\mathring h^{n+1}_{ij}E_{ijk}$ and $\lambda\doteq\sum_k\lambda_ke_k$. By Remark \ref{rem:Q_term_alternative_expression},
\begin{equation}\label{eq:gradient_traceless_B_nu}
    \frac{1}{2}\nabla_k|\mathring B_\nu|^2=\lambda_k+b_k.
\end{equation}

\begin{lem}\label{lem:log_omega_expansion}
    At every point where $|\mathbf H|>0$ and $|B_I|>0$,
    \begin{equation}\label{eq:log_omega_expansion_5_6}
        (1-\theta) \Delta_\rho\log\omega
        =\frac{2}{|B_I|^2}\left[\theta u \mathcal R_f-\mathcal R_I\right]+\frac{2}{|B_I|^2} \mathcal G_\zeta,
    \end{equation}
    where
    \begin{equation}\label{eq:G_zeta_definition}
        \begin{aligned}
            \mathcal G_\zeta&\doteq\ \bigl|\nabla^\perp B_I\bigr|^2+\theta u \mathcal G_f-2\!\sum_{i,j,k}\!Q_{ijk}C_{ijk}\\
            &\qquad+\frac{|B_I|^2}{2}\left[\theta(1-\theta)\bigl|\nabla\log f\bigr|^2-2\theta\bigl\langle\nabla\log f,\zeta\bigr\rangle-|\zeta|^2\right].
        \end{aligned}
    \end{equation}
\end{lem}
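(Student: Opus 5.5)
This is a direct computation, starting from the identity $(1-\theta)\log\omega=\log|B_I|^2-\theta\log f$, which holds on $\{|\mathbf H|>0,\ |B_I|>0\}$ because $\omega=|B_I|^{2/(1-\theta)}f^{-\theta/(1-\theta)}$. The drift Laplacian satisfies the usual chain rule: for $\varphi>0$,
\[
\Delta_\rho\log\varphi=\frac{\Delta_\rho\varphi}{\varphi}-|\nabla\log\varphi|^2,
\]
since the first-order drift term is a derivation. Applying this to $\varphi=|B_I|^2$ and $\varphi=f$ gives
\[
(1-\theta)\Delta_\rho\log\omega=\frac{\Delta_\rho|B_I|^2-\theta u\,\Delta_\rho f}{|B_I|^2}-|\nabla\log|B_I|^2|^2+\theta|\nabla\log f|^2 ,
\]
where we used $\theta\Delta_\rho f/f=\theta u\Delta_\rho f/|B_I|^2$, which follows from $uf=|B_I|^2$.

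For the first quotient, we substitute \eqref{eq:interpolated_expansion}. Its numerator is exactly
\[
2[\theta u\mathcal R_f-\mathcal R_I]+2\Bigl[|\nabla^\perp B_I|^2+\theta u\mathcal G_f-2\sum Q_{ijk}C_{ijk}\Bigr].
\]
Dividing by $|B_I|^2$ produces the reaction term of \eqref{eq:log_omega_expansion_5_6} and the first line of $\mathcal G_\zeta$, both multiplied by $2/|B_I|^2$.

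It remains to rewrite the leftover gradient terms in terms of $\zeta$. By definition, $\nabla\log|B_I|^2=\zeta+\theta\nabla\log f$. Hence
\[
-|\nabla\log|B_I|^2|^2+\theta|\nabla\log f|^2=-|\zeta|^2-2\theta\langle\nabla\log f,\zeta\rangle-\theta^2|\nabla\log f|^2+\theta|\nabla\log f|^2 .
\]
This equals $\theta(1-\theta)|\nabla\log f|^2-2\theta\langle\nabla\log f,\zeta\rangle-|\zeta|^2$. Writing it as $\frac{2}{|B_I|^2}\cdot\frac{|B_I|^2}{2}[\cdots]$ matches the second line of \eqref{eq:G_zeta_definition}, which proves the identity.

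There is no genuine obstacle here. The only points needing care are the factor conventions: the $\frac{|B_I|^2}{2}$ prefactor must combine with the overall $\frac{2}{|B_I|^2}$, and the expression \eqref{eq:interpolated_expansion} must be used in the form that already incorporates \eqref{eq:rho_laplacian_squared_norm_B_I} and \eqref{eq:rho_laplacian_f_identity}. One should also check that $\omega$, $\zeta$ and $u$ are smooth and positive on the stated set, which holds because $f>0$ there by Proposition \ref{prop:rho_laplacian_f}.
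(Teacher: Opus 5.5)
Your proposal is correct and matches the paper's proof: the log chain rule for $\Delta_\rho$ applied to $(1-\theta)\log\omega=\log|B_I|^2-\theta\log f$, substitution of \eqref{eq:interpolated_expansion}, and the rewriting $\nabla\log|B_I|^2=\zeta+\theta\nabla\log f$ in the leftover gradient terms. One small point: $f>0$ on $\{|\mathbf H|>0\}$ follows directly from $f\geq\varepsilon_0|\mathbf H|^2$ in \eqref{eq:f_upper_bound}, so you do not need Proposition \ref{prop:rho_laplacian_f}.
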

\begin{proof}
    Since $(1-\theta)\log\omega=\log|B_I|^2-\theta\log f$ and $\Delta_\rho\log \varphi=\frac{\Delta_\rho \varphi}{\varphi}-|\nabla\log \varphi|^2$ for $\varphi>0$,
    \begin{equation*}
        (1-\theta)\Delta_\rho\log\omega=\frac{1}{|B_I|^2}\left[\Delta_\rho|B_I|^2-\theta u\Delta_\rho f\right]-\bigl|\nabla\log|B_I|^2\bigr|^2+\theta\bigl|\nabla\log f\bigr|^2 .
    \end{equation*}
    Expanding the bracket through \eqref{eq:interpolated_expansion}, and writing $\nabla\log|B_I|^2=\zeta+\theta\nabla\log f$ in the last two terms, gives \eqref{eq:log_omega_expansion_5_6}.
\end{proof}

\begin{lem}\label{lem:gradient_estimate_5_6}
    Let $\underline u>0$. Then, at every point where $|\mathbf H|>0$ and $u\geq \underline u$,
    \begin{equation}\label{eq:gradient_estimate_5_6}
        \mathcal G_\zeta\geq-\widetilde C |B_I|^2|\zeta|^2,
        \qquad
        \widetilde C\doteq\frac{1}{2(1-\theta)}\left(1+\frac{(1+\overline u)^2}{2\chi(1-\theta)\underline u^3}\right),
    \end{equation}
    where $\chi$ is defined in \eqref{eq:algebraic_constants_5_6}.
\end{lem}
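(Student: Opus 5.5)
We need to bound $\mathcal G_\zeta$ from below by $-\widetilde C|B_I|^2|\zeta|^2$ on $\{u\ge\underline u\}$. The strategy is to combine the lower bounds of Lemma \ref{lem:LNZ_B1_B2} with an exact rewriting of the dangerous cross term. We then show that the remaining quadratic form in the gradient quantities is nonnegative up to a term quadratic in $\zeta$.

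\textbf{Expanding the cross term.} By \eqref{eq:Q_C_contraction},
\[
-2\sum Q_{ijk}C_{ijk}=-2|C|^2-2\langle E,C\rangle+2\langle\nabla\log|\mathbf H|,b\rangle .
\]
The $-2|C|^2$ is partly compensated by the $|C|^2$ in \eqref{eq:gradient_1_5_6}. The term $-2\langle E,C\rangle$ is absorbed by Young's inequality, using the residual $|C|^2$ and the positive $\theta u\frac{5n-8}{6(n-1)}|E|^2$ from \eqref{eq:gradient_2_5_6}. Since $u\ge\underline u$, this coefficient is bounded below. The problematic term is $2\langle\nabla\log|\mathbf H|,b\rangle$.

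Instead of using Young's inequality on it (the source of $c_n^{\mathrm{Naff}}$), we differentiate \eqref{eq:algebraic_identity}:
\[
\tfrac{1}{3n}\nabla|\mathbf H|^2=\nabla f+\nabla|B_I|^2+2(\lambda+b)
\]
by \eqref{eq:gradient_traceless_B_nu}. Writing $\nabla|B_I|^2=|B_I|^2(\zeta+\theta\nabla\log f)$ and $\nabla f=f\nabla\log f$, we get
\[
\nabla\log|\mathbf H|=\frac{3n}{2|\mathbf H|^2}\Big[(f+\theta|B_I|^2)\nabla\log f+|B_I|^2\zeta+2\lambda+2b\Big].
\]
So $\langle\nabla\log|\mathbf H|,b\rangle$ becomes a combination of $|b|^2$, $\langle\lambda,b\rangle$, $\langle\nabla\log f,b\rangle$ and $\langle\zeta,b\rangle$, with coefficients of the form $(\cdot)/(f+|B_I|^2+|\mathring B_\nu|^2)$. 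The $|b|^2$ term has a favorable sign here.

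\textbf{Reducing to a quadratic form.} To control everything we need $|b|\le|\mathring B_\nu||C|$ (Cauchy--Schwarz) and $|\lambda|\le|\mathring B_\nu||E|$. We also need $|C|\le|B_I||\nabla^\perp\nu|$, as in \eqref{eq:gradient_H_estimate}. Normalizing by $f$, with the variables $u$ and $w=|\mathring B_\nu|^2/f$, all terms become a quadratic form in
\[
|C|,\quad |E|,\quad |B_I||\nabla^\perp\nu|,\quad |B_I||\nabla\log f|,\quad |B_I||\zeta| .
\]
The $|\nabla\log f|^2$ part comes with the kept positive coefficient $\tfrac{|B_I|^2}{2}\theta(1-\theta)$. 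Its cross term with $\zeta$ is $-\theta|B_I|^2\langle\nabla\log f,\zeta\rangle$, which we bound by $\tfrac12\theta(1-\theta)$-absorption. This produces $-\frac{\theta}{2(1-\theta)}|B_I|^2|\zeta|^2$, explaining the factor $\frac1{2(1-\theta)}$ in $\widetilde C$ together with the explicit $-\frac12|B_I|^2|\zeta|^2$.

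The remaining $\zeta$-cross term from the $b$-rewriting is absorbed via Young against a $\chi$-multiple of the positive definite part. Here $u\ge\underline u$ and $u\le\overline u$, through factors like $(f+|B_I|^2)/|B_I|^2\le(1+\overline u)/\underline u$, produce the constant $\frac{(1+\overline u)^2}{2\chi(1-\theta)\underline u^3}$.

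\textbf{Main obstacle.} The hard step is verifying that the $\zeta$-free quadratic form, in the variables $(|C|,|E|,|B_I||\nabla^\perp\nu|,|B_I||\nabla\log f|)$ with coefficients depending on $u,w\ge0$, is not merely nonnegative but bounded below by $\chi$ times a coercive quantity. This must hold uniformly for $n\ge5$ at $\theta=5/6$. I would reduce it to a finite set of algebraic inequalities on the coefficients, namely the conditions \eqref{eq:condition_C1_new}--\eqref{eq:condition_C3_new} of Appendix \ref{sec:algebraic_lemma_5_6}, with $\chi$ from \eqref{eq:algebraic_constants_5_6}. I would check these via principal minors and monotonicity in $n$, $u$, $w$, and invoke that appendix lemma to conclude.
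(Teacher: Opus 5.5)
There is a genuine gap, and it is in the step you yourself flag as the main obstacle. Rewriting $2\langle\nabla\log|\mathbf H|,b\rangle$ by differentiating \eqref{eq:algebraic_identity} is the right idea, and it is what the paper does. The trouble comes next: you pass to a quadratic form in the norms $|C|,|E|,|B_I||\nabla\log f|,\dots$ via $|b|\le|\mathring B_\nu||C|$ and $|\lambda|\le|\mathring B_\nu||E|$, and this destroys the structure that makes the estimate true at the threshold $\frac{4}{3n}$.

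The missing idea is to split, for each $k$, $E_k=a_k\,\mathring h^{n+1}/|\mathring B_\nu|+E_k^\perp$ and $C_k=c_k\,\mathring h^{n+1}/|\mathring B_\nu|+C_k^\perp$. Then $\lambda=|\mathring B_\nu|a$ and $b=|\mathring B_\nu|c$ hold \emph{exactly}. Write $t=3n|\mathring B_\nu|^2/|\mathbf H|^2$ and $s=3nf/|\mathbf H|^2$, so that $s(1+u)+t=1$. Three things then happen.
\begin{enumerate}
\item The term $\frac{6n}{|\mathbf H|^2}\langle\lambda,b\rangle=2t\langle a,c\rangle$ partially cancels the parallel part of $-2\langle E,C\rangle$, leaving $-2(1-t)\langle a,c\rangle=-2s(1+u)\langle a,c\rangle$.
\item The term $\frac{6n}{|\mathbf H|^2}|b|^2=2t|c|^2$ becomes a usable gain.
\item Every cross term with $\nabla\log f$ or $\zeta$ involves only $c$.
\end{enumerate}
Hence the orthogonal part $\frac1\eta u|E^\perp|^2-2\langle E^\perp,C^\perp\rangle+\frac\tau u|C^\perp|^2$ is nonnegative by $\tau\ge\eta$, i.e.\ \eqref{eq:condition_C3_new}. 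Lemma \ref{lem:algebraic_lemma_5_6} is precisely the statement that the coefficient of $|c|^2$, which contains $+2t$, $-\mu\,st(1+\theta u)^2/u$ and $-\eta\,s^2(1+u)^2/u$, is at least $\chi u$. That surplus is what absorbs $\Lambda\langle\zeta,c\rangle$ and produces $\widetilde C$.

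With your Cauchy--Schwarz bounds none of this survives. The $E$--$C$ cross term becomes $-2(1+t)|E||C|$ instead of being reduced, the gain $2t|c|^2$ is lost, and the cost $\mu\,st(1+\theta u)^2/u$ of absorbing $\langle\nabla f,b\rangle$ falls on all of $|C|^2$. The appendix lemma then does not apply to what you get, and in fact your form is not nonnegative.

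Take $n=5$ and $\theta=\frac56$, so $\tau=\frac{12}{7}$, $\chi=\frac{85}{42}$, $\eta=\frac{144}{85}$, $\mu=\frac{18}{5}$, and choose $u=t=\frac1{10}$, $s=\frac9{11}$. Set $\zeta=0$ and complete the square in $\nabla\log f$. The $(|E|,|C|)$-block of your form then has diagonal entries $u/\eta\approx0.059$ and $\approx17.7$, and off-diagonal entry $-(1+t)$. Its determinant is about $1.05-1.21<0$.

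There is no room for such losses. As $u,t\to0$ the relevant condition degenerates to $\tau/\eta=\chi/2\ge1$, which for $n=5$ holds only with margin about $0.012$.

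A smaller point: the square in $\nabla\log f$ must be completed jointly against $\zeta$ and $c$. If you absorb the $\zeta$ cross term on its own, as you describe, you use up all of $\frac{|B_I|^2}{2}\theta(1-\theta)|\nabla\log f|^2$ and leave nothing to control $\langle\nabla f,b\rangle$.
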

\begin{proof}
    Since $u\geq \underline u>0$, one has $|B_I|>0$, and Cauchy--Schwarz in \eqref{eq:C_E_definition} gives $\bigl|\nabla^\perp\nu\bigr|^2\geq\frac{|C|^2}{|B_I|^2}$. Inserting \eqref{eq:gradient_1_5_6}, $\theta$ times \eqref{eq:gradient_2_5_6} and \eqref{eq:Q_C_contraction} into \eqref{eq:G_zeta_definition}, and using this bound, which is legitimate because the coefficients of $\bigl|\nabla^\perp\nu\bigr|^2$ in \eqref{eq:gradient_1_5_6} and \eqref{eq:gradient_2_5_6} are nonnegative, together with $\frac{f+|B_I|^2}{|B_I|^2}=\frac{1+u}{u}$, we get
    \begin{equation}\label{eq:G_zeta_first_bound}
        \begin{aligned}
            \mathcal G_\zeta&\geq\ |C|^2\left(\tau \frac{1+u}{u}+(\tau-1) \frac{|\mathring B_\nu|^2}{|B_I|^2}
            +\chi\left[1+u+u\frac{|\mathring B_\nu|^2}{|B_I|^2}\right]-1\right)\\
            &\qquad+\frac{1}{\eta} u |E|^2-2\langle E,C\rangle+2\bigl\langle\nabla\log|\mathbf H|,b\bigr\rangle\\
            &\qquad+\frac{|B_I|^2}{2}\theta(1-\theta)\bigl|\nabla\log f\bigr|^2-\theta|B_I|^2\bigl\langle\nabla\log f,\zeta\bigr\rangle-\frac{|B_I|^2}{2}|\zeta|^2,
        \end{aligned}
    \end{equation}
    with $\chi,\tau,\eta$ and (later) $\mu$ as in \eqref{eq:algebraic_constants_5_6}.

    For every index $k$, decompose $E_k\doteq(E_{ijk})_{i,j}$ and $C_k\doteq(C_{ijk})_{i,j}$, viewed as symmetric $2$-tensors, into their components parallel and orthogonal to $\mathring h^{n+1}$. At points where $|\mathring B_\nu|>0$ we set
    \begin{equation*}
        E_k=a_k \frac{\mathring h^{n+1}}{|\mathring B_\nu|}+E_k^\perp,
        \qquad
        C_k=c_k \frac{\mathring h^{n+1}}{|\mathring B_\nu|}+C_k^\perp,
    \end{equation*}
    while at points where $|\mathring B_\nu|=0$ we set $a_k=c_k=0$ and $E^\perp_k=E_k$, $C^\perp_k=C_k$. We write $a\doteq\sum_ka_ke_k$, $c\doteq\sum_kc_ke_k$, $|E^\perp|^2\doteq\sum_k|E^\perp_k|^2$, and similarly $|C^\perp|^2$ and $\langle E^\perp,C^\perp\rangle$. In both cases, $\lambda=|\mathring B_\nu| a$, $b=|\mathring B_\nu| c$, and
    \begin{equation*}
        |E|^2=|a|^2+|E^\perp|^2,\qquad |C|^2=|c|^2+|C^\perp|^2,\qquad \langle E,C\rangle=\langle a,c\rangle+\langle E^\perp,C^\perp\rangle .
    \end{equation*}
    Accordingly, the right-hand side of \eqref{eq:G_zeta_first_bound} splits as $\mathcal G_\zeta^\perp+\mathcal G_\zeta^\parallel$, where $\mathcal G_\zeta^\perp$ collects the terms containing $E^\perp$ and $C^\perp$, and $\mathcal G_\zeta^\parallel$ all the remaining ones.

    Since $\tau>1$ and $\chi>0$, the coefficient of $|C|^2$ in \eqref{eq:G_zeta_first_bound} is at least $\frac{\tau}{u}$. Hence
    \begin{equation*}
        \mathcal G_\zeta^\perp\geq\frac{1}{\eta}u|E^\perp|^2-2\langle E^\perp,C^\perp\rangle+\frac{\tau}{u}|C^\perp|^2
        =\frac{1}{\eta}u\left|E^\perp-\frac{\eta}{u}C^\perp\right|^2
        +\frac{\tau-\eta}{u}|C^\perp|^2\geq0,
    \end{equation*}
    because $\tau-\eta=\tau\frac{\chi-2}{\chi}\geq0$ by \eqref{eq:condition_C3_new}.

    Hence we focus on the parallel part $\mathcal G_\zeta^\parallel$.
    Differentiating \eqref{eq:algebraic_identity} and using the definition of $\zeta$ and \eqref{eq:gradient_traceless_B_nu},
    \begin{equation*}
        \frac{2}{3n}|\mathbf H|^2\nabla\log|\mathbf H|=(1+\theta u)\nabla f+2\lambda+2b+|B_I|^2\zeta.
    \end{equation*}
    Taking the inner product with $b=|\mathring B_\nu|c$ yields
    \begin{equation*}
        2\bigl\langle\nabla\log|\mathbf H|,b\bigr\rangle
        =\frac{3n|\mathring B_\nu|}{|\mathbf H|^2}\left[(1+\theta u)\langle\nabla f,c\rangle+|B_I|^2\langle\zeta,c\rangle\right]
        +\frac{6n|\mathring B_\nu|^2}{|\mathbf H|^2}\left(\langle a,c\rangle+|c|^2\right).
    \end{equation*}
    Substituted back into \eqref{eq:G_zeta_first_bound}, this allows two completions of squares. First,
    \begin{equation*}
        \begin{aligned}
            \frac{|B_I|^2}{2}\theta(1-\theta)&\bigl|\nabla\log f\bigr|^2-\theta|B_I|^2\bigl\langle\nabla\log f,\zeta\bigr\rangle
            +(1+\theta u)\frac{3n|\mathring B_\nu|}{|\mathbf H|^2}\langle\nabla f,c\rangle\\
            &\geq-\frac{\theta}{2(1-\theta)}|B_I|^2|\zeta|^2
            -\mu \frac{f^2(1+\theta u)^2}{|B_I|^2}\frac{9n^2|\mathring B_\nu|^2}{|\mathbf H|^4}|c|^2
            +\frac{f(1+\theta u)}{1-\theta}\frac{3n|\mathring B_\nu|}{|\mathbf H|^2}\langle\zeta,c\rangle .
        \end{aligned}
    \end{equation*}
    Second, since $\frac{3n|\mathring B_\nu|^2}{|\mathbf H|^2}-1=-\frac{3nf(1+u)}{|\mathbf H|^2}$ by \eqref{eq:algebraic_identity},
    \begin{equation*}
        \frac{1}{\eta}u|a|^2+2\left(\frac{3n|\mathring B_\nu|^2}{|\mathbf H|^2}-1\right)\langle a,c\rangle
        \geq-\eta \frac{9n^2f^2(1+u)^2}{|\mathbf H|^4 u} |c|^2 .
    \end{equation*}
    Inserting both into \eqref{eq:G_zeta_first_bound} yields
    \begin{align*}
        \mathcal G_\zeta^\parallel&\geq|c|^2\left(\tau\frac{1+u}{u}+(\tau-1)\frac{|\mathring B_\nu|^2}{|B_I|^2}+\chi\left[1+u+u\frac{|\mathring B_\nu|^2}{|B_I|^2}\right]+\frac{6n|\mathring B_\nu|^2}{|\mathbf H|^2}-1\right.
        \\&\left.\qquad\qquad-\mu\frac{f^2(1+\theta u)^2}{|B_I|^2}\frac{9n^2|\mathring B_\nu|^2}{|\mathbf H|^4}-\eta \frac{9n^2f^2(1+u)^2}{|\mathbf H|^4u}\right)
        \\&\qquad +\Lambda \langle\zeta, c\rangle -\frac{|B_I|^2}{2(1-\theta)}|\zeta|^2,
    \end{align*}
    where
    \begin{equation*}
        \Lambda\doteq |B_I|^2\frac{3n|\mathring{B}_\nu|}{|\mathbf{H}|^2} \frac{1+u}{u(1-\theta)}.
    \end{equation*}
    Set $s\doteq\frac{3nf}{|\mathbf H|^2}$ and $t\doteq\frac{3n|\mathring B_\nu|^2}{|\mathbf H|^2}$. Then $s>0$ and $s(1+u)+t=1$ by \eqref{eq:algebraic_identity}, so $t\in[0,1)$. Moreover, $\frac{|\mathring B_\nu|^2}{|B_I|^2}=\frac{t}{su}$ and $\frac{6n|\mathring B_\nu|^2}{|\mathbf H|^2}=2t$, and the last two terms in the coefficient of $|c|^2$ equal $-\mu\frac{st(1+\theta u)^2}{u}-\eta\frac{s^2(1+u)^2}{u}$. Therefore Lemma \ref{lem:algebraic_lemma_5_6} and Remark \ref{rem:conditions_C123_verified} bound the coefficient of $|c|^2$ from below by $\chi u$, and
    \begin{equation*}
        \begin{aligned}
            \mathcal G_\zeta^\parallel&\geq \chi u|c|^2+\Lambda \langle\zeta, c\rangle -\frac{|B_I|^2}{2(1-\theta)}|\zeta|^2
            \\&=\left|\sqrt{\chi u}\:c+\frac{\Lambda}{2\sqrt{\chi u}}\zeta\right|^2-\frac{\Lambda^2}{4\chi u}|\zeta|^2 -\frac{|B_I|^2}{2(1-\theta)}|\zeta|^2
            \\&\geq-\widetilde C|B_I|^2|\zeta|^2 ,
        \end{aligned}
    \end{equation*}
    where in the last inequality we used $\frac{\Lambda^2}{4\chi u}\leq\frac{(1+\overline u)^2}{4\chi(1-\theta)^2\underline u^3}|B_I|^2$, which follows from $|B_I|^2\leq\frac{1}{3n}|\mathbf H|^2$, $|\mathring B_\nu|^2\leq\frac{1}{3n}|\mathbf H|^2$ and $\underline u\leq u\leq \overline u$.
    Since $\mathcal G_\zeta=\mathcal G_\zeta^\perp+\mathcal G_\zeta^\parallel$ and
    $\mathcal G_\zeta^\perp\geq0$, this is \eqref{eq:gradient_estimate_5_6}.
\end{proof}

In particular, by \eqref{eq:log_omega_expansion_5_6}, the second inequality in \eqref{eq:reaction_inequality_n_geq_7} and \eqref{eq:gradient_estimate_5_6}, on $\{\omega>0\}\cap\{u\geq \underline u\}$ one has
\begin{equation}\label{eq:inequality_drift_laplacian_omega_complete}
    \Delta_\rho\log\omega\geq\frac{7}{1-\theta}|B_I|^2-2(1-\theta)\widetilde C\bigl|\nabla\log\omega\bigr|^2,
\end{equation}
which is the inequality announced in Section \ref{sec:translators}, with $\kappa_1=\frac{7}{1-\theta}=42$ and $\kappa_2=2(1-\theta)\widetilde C$.

\subsection{Proof of the codimension reduction}

\begin{thm}\label{thm:codim_reduction_n_geq_5}
    Let $n\geq5$, $p\geq1$, and let $x:M^n\to\mathbb R^{n+p}$ be a complete, connected translator without boundary satisfying \eqref{eq:main_pinching} for some $\varepsilon_0>0$. Then either $B\equiv0$, or $|\mathbf H|>0$ everywhere. In the latter case $B_I\equiv0$, $\nabla^\perp\nu\equiv0$, and $x(M)$ is contained in the affine subspace $A\doteq x(q_0)+W\subset\mathbb{R}^{n+p}$, for some $q_0\in M$ and some $(n+1)$-dimensional linear subspace $W\subseteq \mathbb R^{n+p}$. Moreover, $\vec v\in W$, the normal space of $x$ in $A$ is $\mathbb R\nu$, and $x:M\to A$ is an orientable codimension $1$ translator with the same velocity $\vec v$.
\end{thm}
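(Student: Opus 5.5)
By Proposition \ref{prop:rho_laplacian_f}, either $f\equiv0$, in which case $B\equiv0$ and we are done, or $f>0$ and $|\mathbf H|>0$ everywhere. In the second case $u=|B_I|^2/f$ and $\omega=|B_I|^{12}/f^5$ are smooth on all of $M$. Moreover $\omega=fu^6\leq\frac{1}{3n}\overline u^{\,6}$ by \eqref{eq:f_upper_bound} and \eqref{eq:uniform_pinching_bounds_u}, so $\omega$ is bounded.

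The first step is to show $\omega\equiv0$. Suppose instead $\omega^*\doteq\sup_M\omega>0$. I would apply the Omori--Yau principle of Proposition \ref{prop:omori_yau_translators} not to $\log\omega$ but to a smooth bounded function. The natural choice is $\varphi=\omega^{1/m}$ for large $m$, or simply $\omega$ itself. This gives a sequence $q_k$ with $\omega(q_k)\to\omega^*$, $|\nabla\omega|(q_k)\to0$ and $\limsup\Delta_\rho\omega(q_k)\leq0$. Since $\omega\leq f u^6\leq\frac{1}{3n}u^6$, we get $u(q_k)\geq\underline u\doteq(3n\omega^*/2)^{1/6}>0$ for $k$ large. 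Similarly $|B_I|^2=uf\geq u\,\omega/u^6=\omega u^{-5}\geq\omega^*\overline u^{\,-5}/2$, which yields a positive lower bound on $|B_I|^2(q_k)$. Hence $\nabla\log\omega(q_k)=\nabla\omega/\omega\to0$ and $\Delta_\rho\log\omega=\Delta_\rho\omega/\omega-|\nabla\log\omega|^2$ has $\limsup\leq0$. Inequality \eqref{eq:inequality_drift_laplacian_omega_complete}, valid at these points since $\omega>0$ and $u\geq\underline u$, gives $\Delta_\rho\log\omega(q_k)\geq42|B_I|^2(q_k)-\kappa_2|\nabla\log\omega|^2(q_k)$. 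The right-hand side is bounded below by a positive constant for $k$ large, which is a contradiction. Therefore $\omega\equiv0$, i.e. $B_I\equiv0$.

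Next, from $B_I\equiv0$ and \eqref{eq:gradient_H_estimate} we get $|\mathbf H||\nabla^\perp\nu|\leq0$, so $\nabla^\perp\nu\equiv0$. For codimension reduction I would follow the standard argument in \cite[Proposition 2.5]{Naff_Planarity}. Let $N_q\doteq dx(T_qM)\oplus\mathbb R\nu(q)$. Weingarten gives $\overline\nabla_X\nu=-S_\nu X$, which is tangent. Also $\overline\nabla_XY=\nabla_XY+\langle B(X,Y),\nu\rangle\nu$, because $B=B_\nu$. So the distribution $N$ is parallel along $M$ with respect to the flat connection, and by connectedness it equals a fixed $(n+1)$-dimensional subspace $W$. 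Since $dx$ takes values in $W$, $x(M)\subset x(q_0)+W$. Since $\mathbf H=|\mathbf H|\nu\in W$ and $\vec v^\top\in W$, we have $\vec v=\vec v^\top+\mathbf H\in W$. The normal of $M$ in $A$ is $\mathbb R\nu$, which is globally trivialized by $\nu$; this gives orientability. Finally, the translator equation persists in $A$ because $\vec v^{\perp_A}=\vec v^\perp=\mathbf H$.

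The main obstacle is the Omori--Yau step. One must pick a function to which the principle applies (bounded above, $C^2$ on all of $M$, including where $B_I=0$) and then transfer the conclusions to $\log\omega$. This in turn requires the lower bounds on $u$ and $|B_I|^2$ along the sequence, so that the hypothesis $u\geq\underline u$ of Lemma \ref{lem:gradient_estimate_5_6} holds and $\kappa_1|B_I|^2$ stays away from zero.
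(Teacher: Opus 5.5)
Your proposal is correct and follows the paper's proof essentially step by step. You use the same dichotomy from Proposition \ref{prop:rho_laplacian_f}, apply Omori--Yau to the bounded smooth function $\omega$ itself, derive the same positive lower bounds on $u$ and $|B_I|^2$ along the sequence, and reach the same contradiction with \eqref{eq:inequality_drift_laplacian_omega_complete}. The only difference is minor: the paper cites Erbacher's theorem for the parallel first normal bundle $\mathbb R\nu$, while you check directly that $dx(TM)\oplus\mathbb R\nu$ is invariant under the flat connection, which is Erbacher's argument in this rank-one case.
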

\begin{proof}
    By Proposition \ref{prop:rho_laplacian_f}, either $f\equiv0$, in which case $B\equiv0$, or $f>0$ and $|\mathbf H|>0$ everywhere. Assume the latter.

    Since $f\leq\frac{1}{3n}|\mathbf H|^2\leq\frac{1}{3n}$ by \eqref{eq:f_upper_bound} and $|\mathbf H|\leq1$, and since $u\leq \overline u$ by \eqref{eq:uniform_pinching_bounds_u},
    \begin{equation*}
        0\leq\omega\leq\frac{1}{3n}(\overline u)^{\frac{1}{1-\theta}}<+\infty .
    \end{equation*}
    Assume, by contradiction, that $\omega^*\doteq\sup_M\omega>0$. By Proposition \ref{prop:omori_yau_translators} there is a sequence $\{q_j\}\subset M$ with
    \begin{equation*}
        \omega(q_j)>\omega^*-\frac1j,\qquad \bigl|\nabla\omega\bigr|(q_j)<\frac1j,\qquad \Delta_\rho\omega(q_j)<\frac1j .
    \end{equation*}
    For $j$ large enough, $\omega(q_j)\geq\frac{\omega^*}{2}$. Hence, since $\omega=fu^{\frac{1}{1-\theta}}$ with $u\leq\overline u$ and $f\leq\frac{1}{3n}$,
    \begin{equation*}
        f(q_j)\geq f_*\doteq\frac{\omega^*}{2(\overline u)^{\frac{1}{1-\theta}}}>0,
        \qquad
        u(q_j)\geq \underline u\doteq\left(\frac{3n\,\omega^*}{2}\right)^{1-\theta}>0.
    \end{equation*}
    Thus $|B_I|^2(q_j)=u(q_j)f(q_j)\geq \underline uf_*>0$, $\log\omega$ is defined near $q_j$, and
    \begin{equation*}
        |\zeta|(q_j)=(1-\theta)\frac{\bigl|\nabla\omega\bigr|(q_j)}{\omega(q_j)}\longrightarrow0,
        \qquad
        \limsup_j\Delta_\rho\log\omega(q_j)\leq\limsup_j\frac{\Delta_\rho\omega(q_j)}{\omega(q_j)}\leq 0 .
    \end{equation*}
    By \eqref{eq:inequality_drift_laplacian_omega_complete},
    \begin{equation*}
        (1-\theta)\Delta_\rho\log\omega(q_j)\geq7|B_I|^2(q_j)-2\widetilde C|\zeta|^2(q_j)\geq7\underline uf_*-2\widetilde C|\zeta|^2(q_j).
    \end{equation*}
    Taking the $\limsup_j$ of both sides gives $0\geq7\underline uf_*>0$, a contradiction. Hence $\sup_M\omega=0$ and, since $\omega\geq0$, $\omega\equiv0$. As $f>0$, this forces $B_I\equiv0$.

    By \eqref{eq:gradient_H_estimate}, $\nabla^\perp\nu\equiv0$. Since $B=h^{n+1}\otimes\nu$ and $\operatorname{tr}h^{n+1}=|\mathbf H|>0$, at every $q\in M$ the first normal space $N_1(q)$, that is, the orthogonal complement in $N_qM$ of $\{\xi\in N_qM\:|\:S_\xi=0\}$, equals $\mathbb R\nu(q)$. Hence $N_1$ has constant rank and is the parallel line bundle $\mathbb R\nu$, and the codimension reduction $x(M)\subseteq A$ follows from \cite{Erbacher_CodimensionReduction}.

    For every $q\in M$, $dx(T_qM)\subseteq W$ and $B(X,Y)\in W$ for all $X,Y\in T_qM$, therefore $\mathbf H=\operatorname{tr}_gB\in W$, $\nu=\mathbf H/|\mathbf H|\in W$ and $dx(T_qM)\oplus\mathbb R\nu(q)= W$. So $x:M\to A$ is two-sided with unit normal $\nu$ and scalar second fundamental form $h^{n+1}$. In particular, $M$ is orientable because it is two-sided and $A$ is orientable. Decomposing $\vec v=\vec v^\top+\mathbf H$, both summands lie in $W$, hence $\vec v\in W$. Finally, \eqref{eq:translator} still holds for $x:M\to A$ with the same velocity $\vec v$.
\end{proof}

\section{Compactness criterion and codimension reduction in low dimensions}\label{sec:codim_reduction_low}

\subsection{Extension of the compactness criterion}

We first extend \cite[Proposition 4.1]{LeeNaffZhu_Planarity} in dimension $n=2$, relying on White's theorem \cite{White} and on the total curvature formula of M\"uller and \v{S}ver\'ak \cite{MullerSverak_SurfacesFiniteTotalCurvature}.

\begin{prop}[Compactness criterion]\label{prop:translator_flatness_dim_234_intro}
    Let $n\geq 2$, $p\geq 1$ and let $x:M^n\to\mathbb R^{n+p}$ be a complete, connected immersion
    without boundary satisfying the pinching condition
    \begin{equation*}
        |B|^2 \leq \left(\frac{1}{n-1} - \varepsilon_0\right)|\mathbf{H}|^2,
    \end{equation*}
    for some constant $\varepsilon_0 > 0$. Then $M$ is either flat or compact. In the first case, $B\equiv 0$ and $x$ is an isometry onto an $n$-dimensional plane.
\end{prop}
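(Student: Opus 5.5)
For $n\geq3$ the statement is exactly \cite[Proposition 4.1]{LeeNaffZhu_Planarity}, so I would quote it and concentrate on the new case $n=2$. There the pinching reads $|B|^2\leq(1-\varepsilon_0)|\mathbf H|^2$ with $\varepsilon_0\in(0,1)$. Tracing Gauss' equation \eqref{eq:gauss_equation} gives $2K=|\mathbf H|^2-|B|^2$ for the Gauss curvature $K$. The pinching then yields
\begin{equation*}
    2K\geq\varepsilon_0|\mathbf H|^2\geq\frac{\varepsilon_0}{1-\varepsilon_0}|B|^2\geq0 ,
\end{equation*}
so $K\geq c|B|^2$ with $c=c(\varepsilon_0)>0$. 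In particular $K\equiv0$ forces $B\equiv0$, and then $x$ is a totally geodesic complete immersion, hence an isometry onto a $2$-plane. So it suffices to show: if $M$ is noncompact, then $\int_M K\,d\operatorname{vol}=0$.

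Assume $M$ is noncompact. Since $K\geq0$ and $M$ is complete, Cohn--Vossen's inequality gives $\int_M K\leq 2\pi\chi(M)\leq2\pi$. The pinching then gives $\int_M|B|^2\leq c^{-1}\int_MK<+\infty$. This is the hypothesis of M\"uller and \v{S}ver\'ak \cite{MullerSverak_SurfacesFiniteTotalCurvature}. Hence $M$ is conformally a closed surface $\overline M$ with finitely many points $p_1,\dots,p_k$ removed, $k\geq1$. Each end has a well-defined multiplicity $m_i\geq1$, and the total curvature formula holds:
\begin{equation*}
    \int_MK=2\pi\Bigl(\chi(\overline M)-\sum_{i=1}^k m_i\Bigr).
\end{equation*}
This is where White's theorem \cite{White} would be used if the finer structure of the ends must be justified, for example that the multiplicities are integers and the formula is exact.

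On the other hand, a complete noncompact surface with $K\geq0$ is, by Cohn--Vossen/Huber, conformally the plane or a cylinder, or it is flat. In the plane case $\chi(\overline M)=2$, $k=1$, and the formula gives $\int K=2\pi(1-m_1)\leq0$. In the cylinder case $\chi(\overline M)=2$, $k=2$, and $\int K=2\pi(-m_1-m_2+2)\leq0$. Combined with $K\geq0$, in both cases $\int K=0$, hence $K\equiv0$ and therefore $B\equiv0$ as above.

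I expect the main obstacle to be the correct invocation of the M\"uller--\v{S}ver\'ak structure theorem and the total curvature formula in arbitrary codimension $p$. In particular, one must check that the end multiplicities satisfy $m_i\geq1$ without any embeddedness assumption; this is precisely where White's theorem should enter. The rest of the argument is elementary once that formula is available.
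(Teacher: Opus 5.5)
Your $n=2$ argument follows the paper's route: the Gauss equation gives $K\geq0$ and $|B|^2\leq CK$, then finite total curvature and finite topology, then the M\"uller--\v{S}ver\'ak total curvature formula. You use Cohn--Vossen/Huber in place of White's theorem, which is a legitimate substitute. The paper uses \cite[Theorem 1]{White} exactly to get $\int_MK^+<\infty$ and finite topological type. The bound $m_i\geq1$ is already part of the M\"uller--\v{S}ver\'ak result, so White plays no role at the ends.

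The decisive formula, however, is misstated. \cite[Corollary 4.2.5]{MullerSverak_SurfacesFiniteTotalCurvature} reads $\int_MK=2\pi\bigl(\chi(M)-\sum_im_i\bigr)$. Here $\chi(M)=\chi(\overline M)-k$ is the Euler characteristic of the punctured surface, the same quantity you correctly used in the Cohn--Vossen bound, not that of its compactification. A quick check: the flat plane has $\overline M=S^2$, $k=m_1=1$ and $\int K=0$. With your displayed formula the plane case would give $2\pi(2-m_1)$, which is $2\pi$ for $m_1=1$, and the argument would not close. The value $2\pi(1-m_1)$ you then write is correct, but it does not follow from the formula you stated. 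With the correct formula no case split is needed: $\chi(M)\leq1$ and $\sum_im_i\geq k\geq1$ give $0\leq\int_MK\leq2\pi(1-k)\leq0$, which is exactly the paper's argument. The cylinder case then gives $-2\pi(m_1+m_2)<0$, i.e.\ it simply does not occur.

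Two smaller points. First, M\"uller--\v{S}ver\'ak requires an oriented surface, and you invoke it before the case split. Your route can handle this more cheaply than the paper's orientable double cover. By Cohn--Vossen, a complete noncompact surface with $K\geq0$ is either homeomorphic to $\mathbb R^2$, or has $\chi(M)=0$ and is then flat; the latter includes the open M\"obius band. So you only need M\"uller--\v{S}ver\'ak on the plane, but you should state the steps in that order.

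Second, for $n\geq3$ the clause ``$B\equiv0$ in the flat case'' is not just a quotation of \cite[Proposition 4.1]{LeeNaffZhu_Planarity}. The paper derives it from the bound $\mathcal R\geq\frac{\varepsilon_0}{2}|\mathbf H|^2\operatorname{Id}$ of \cite[Lemma 3.2]{LynchNguyen_PinchedAncientSolutionsHighCodimMCF}. Flatness then forces $\mathbf H\equiv0$, hence $B\equiv0$ by the pinching, and \cite[Theorem 1]{ONeill_IsometricImmersion} concludes. You need this line as well; it is the higher-dimensional analogue of your $K\geq c|B|^2$.
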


\begin{rem}\label{rem:no_compact_translators}
    If, additionally, $M$ is a translator, Proposition \ref{prop:translator_flatness_dim_234_intro} implies that $x$ is an isometry onto an $n$-dimensional plane, since closed translators do not exist in the Euclidean space. In fact, for a closed translator, integrating the identity $\Delta \langle x, \vec v \rangle = \langle \mathbf{H}, \vec v \rangle = |\mathbf{H}|^2$ over $M$ forces $\mathbf{H} \equiv 0$, which would make $M$ a closed minimal submanifold of $\mathbb{R}^{n+p}$. But then the integral of $\Delta|x|^2=2n+2\langle x,\mathbf H\rangle=2n$ over $M$ implies $0=2n\operatorname{Vol}(M)$, a contradiction.
\end{rem}

\begin{proof}[Proof of Proposition \ref{prop:translator_flatness_dim_234_intro}]
    If $n\geq 3$ the result is a consequence of \cite[Proposition 4.1]{LeeNaffZhu_Planarity}, a generalization of a compactness criterion in \cite{LynchNguyen_PinchedAncientSolutionsHighCodimMCF} through a recent work of Lee--Topping \cite{LeeTopping_ManifoldsPIC1Curvature}. Moreover, by \cite[Lemma 3.2]{LynchNguyen_PinchedAncientSolutionsHighCodimMCF},
    \begin{equation*}
        \mathcal R\geq \frac{\varepsilon_0}{2}|\mathbf H|^2\cdot \operatorname{Id}\geq 0,
    \end{equation*}
    where $\mathcal R$ is the curvature operator of the induced metric $g$ on $M$ and $\operatorname{Id}$ is the identity operator on the bundle of antisymmetric bi-vectors. Hence flatness implies $|\mathbf H|\equiv 0$ and, consequently, $B\equiv 0$ from the pinching condition. To conclude, $x$ is an isometry onto an $n$-dimensional plane thanks to \cite[Theorem 1]{ONeill_IsometricImmersion}.

    It remains to prove the $n=2$ case. If $M$ is compact there is nothing to prove, so suppose, first, that $M$ is orientable and non-compact. In the rest of the proof, $K$ denotes the Gaussian curvature of $M$, whereas $K^+$ and $K^-$ denote its positive and negative parts, respectively. The Gauss equation, together with the pinching hypothesis, gives
    \begin{equation*}
        2K = |\mathbf H|^2 - |B|^2 \ge \varepsilon_0|\mathbf H|^2 \ge 0,
    \end{equation*}
    so $K \ge 0$ everywhere. It follows that $K \ge \frac{\varepsilon_0}{2}|\mathbf H|^2 = \frac{\varepsilon_0}{2}(2K + |B|^2)$, so
    \begin{equation}\label{eq:pinching_estimate_dimension_2}
        |B|^2 \le \frac{2(1-\varepsilon_0)}{\varepsilon_0}K,
    \end{equation}
    an estimate we will use repeatedly. Since $K \ge 0$, certainly $\int_M K^- = 0 < \infty$, so \cite[Theorem 1]{White} applies and yields two facts: $\int_M K^+ < \infty$, hence $\int_M K < \infty$, and $M$ has finite topological type, i.e.\ $M \cong S \setminus \{q_1, \dots, q_k\}$ for some connected, compact surface $S$. Non-compactness of $M$ forces $k \ge 1$.

    It remains to show $K\equiv0$, which via \eqref{eq:pinching_estimate_dimension_2} gives $B\equiv0$ and finishes the proof as in the case $n\geq3$. Integrating \eqref{eq:pinching_estimate_dimension_2} and using $\int_MK<\infty$ gives $\int_M|B|^2<\infty$. Hence \cite[Corollary 4.2.5]{MullerSverak_SurfacesFiniteTotalCurvature} applies to the complete, connected, non-compact, oriented surface $M$ and gives
    \begin{equation*}
        \int_M K = 2\pi\left(\chi(M)-\sum_{i=1}^k m_i\right),
    \end{equation*}
    where $m_i\geq1$ is the multiplicity of the end $a_i$ in the sense of \cite[Definition 4.2.4]{MullerSverak_SurfacesFiniteTotalCurvature}. Since $M$ is connected and non-compact, $H_2(M;\mathbb Z)=0$, so that $\chi(M)=1-b_1(M)\leq1$. Therefore
    \begin{equation*}
        0\leq\int_MK\leq2\pi(1-k)\leq0,
    \end{equation*}
    so $\int_MK=0$ and, since $K\geq0$ is continuous, $K\equiv0$.

    Lastly, suppose that the surface $M$ is non-orientable and non-compact. Consider the orientable double cover $\pi:\widetilde M\to M$, which is a local isometry when $\widetilde M$ is endowed with the pullback metric.
    Then $\widetilde M$ satisfies several properties. First, since $\pi$ is a Riemannian covering map and $M$ is complete, so is $\widetilde M$. Second, $\widetilde M$ is orientable (by construction) and connected, because $M$ is non-orientable. Third, $\widetilde M$ is immersed in the Euclidean space $\mathbb R^{2+p}$ through the map $\tilde x\doteq x\circ \pi$, and its second fundamental form $\widetilde B$ is the pullback of $B$ through $\pi$, i.e. $\widetilde B=\pi^*B$. Hence $|\widetilde B|^2=|B|^2\circ \pi$, $|\widetilde{\mathbf H}|^2=|\mathbf H|^2\circ\pi$ and the pinching holds for $\widetilde M$ with the same constant $\frac{1}{n-1}=1$ and the same $\varepsilon_0>0$. Since $\pi$ is continuous and surjective and $M$ is non-compact, $\widetilde M$ is non-compact as well, and the previous argument gives $\widetilde B\equiv0$. As $\widetilde B=\pi^*B$ and $\pi$ is surjective, $B\equiv0$, and $x$ is an isometry onto an affine plane as in the case $n\geq3$. In particular, $M$ is orientable, so this case does not actually occur.
\end{proof}

\subsection{Codimension reduction in dimensions \texorpdfstring{$2\leq n\leq 4$}{2<=n<=4}}

\begin{cor}\label{cor:codim_reduction_n_leq_4}
    Let $2\leq n\leq4$, $p\geq1$, and let $x:M^n\to\mathbb R^{n+p}$ be a complete, connected translator without boundary satisfying \eqref{eq:main_pinching} for some $\varepsilon_0>0$. Then $B\equiv0$ and $x$ is an isometry onto an affine $n$-plane containing the direction $\vec v$.
\end{cor}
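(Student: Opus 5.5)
The plan is to reduce Corollary \ref{cor:codim_reduction_n_leq_4} to the compactness criterion of Proposition \ref{prop:translator_flatness_dim_234_intro} and then to Remark \ref{rem:no_compact_translators}. The key observation is purely arithmetic: for $2\leq n\leq 4$ one has $\frac{4}{3n}\leq\frac{1}{n-1}$. Indeed, $4(n-1)\leq 3n$ is equivalent to $n\leq 4$, with equality exactly at $n=4$, while for $n=2,3$ the inequality is strict ($\frac23<1$ and $\frac49<\frac12$). Hence, for every $\varepsilon_0>0$,
\begin{equation*}
|B|^2\leq\left(\frac{4}{3n}-\varepsilon_0\right)|\mathbf H|^2\leq\left(\frac{1}{n-1}-\varepsilon_0\right)|\mathbf H|^2 ,
\end{equation*}
so the hypothesis of Proposition \ref{prop:translator_flatness_dim_234_intro} holds with the same $\varepsilon_0$.

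Applying Proposition \ref{prop:translator_flatness_dim_234_intro}, $M$ is either compact, or flat with $B\equiv0$ and $x$ an isometry onto an affine $n$-plane. The compact alternative is excluded by Remark \ref{rem:no_compact_translators}. A compact $M$ without boundary would be a closed translator, and integrating $\Delta\langle x,\vec v\rangle=|\mathbf H|^2$ would force $\mathbf H\equiv0$. Then integrating $\Delta|x|^2=2n$ gives a contradiction. So $B\equiv0$ and $x(M)$ is an affine $n$-plane $P$.

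It remains to see that $P$ contains the direction $\vec v$. Since $B\equiv0$, we have $\mathbf H\equiv0$, and the translator equation \eqref{eq:translator} gives $\vec v^{\perp}=\mathbf H=0$ at every point. Thus $\vec v$ is tangent to $P$, i.e. $\vec v$ lies in the linear subspace parallel to $P$.

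No step is a real obstacle, since all the analytic work is in Proposition \ref{prop:translator_flatness_dim_234_intro}; in particular its $n=2$ case, via White's theorem and the M\"uller--\v{S}ver\'ak formula, is what makes the corollary cover $n=2$. The only point requiring care is the borderline $n=4$, where $\frac{4}{3n}=\frac{1}{n-1}$. There the pinching assumption is exactly the uniform $\frac{1}{n-1}$-pinching with the same $\varepsilon_0$, so the criterion still applies verbatim.
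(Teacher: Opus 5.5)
Your proposal is correct and follows the paper's proof essentially verbatim: you use $\frac{4}{3n}\leq\frac{1}{n-1}$ for $n\leq4$ to invoke Proposition \ref{prop:translator_flatness_dim_234_intro} with the same $\varepsilon_0$, exclude the compact case via Remark \ref{rem:no_compact_translators}, and read off $\vec v\in P$ from $\vec v^\perp=\mathbf H\equiv0$.
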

\begin{proof}
    Since $\frac{4}{3n}\leq\frac{1}{n-1}$ for $n\leq4$, the hypothesis \eqref{eq:main_pinching} with the same $\varepsilon_0$ allows us to apply Proposition \ref{prop:translator_flatness_dim_234_intro}. By Remark \ref{rem:no_compact_translators} the compact alternative does not occur, hence $B\equiv0$ and $x$ is an isometry onto an affine $n$-plane. Lastly, $\vec v^\perp=\mathbf H\equiv0$, so the plane contains the direction $\vec v$.
\end{proof}

\section{Proof of the main results}\label{sec:proof_main_theorems}

The two ranges of dimensions have been treated separately in Sections \ref{sec:codim_reduction_n_geq_5} and \ref{sec:codim_reduction_low}. Putting them together proves Theorem \ref{thm:main_codim_reduction}.

\begin{proof}[Proof of Theorem \ref{thm:main_codim_reduction}]
    If $2\leq n\leq4$, Corollary \ref{cor:codim_reduction_n_leq_4} gives alternative $(1)$. If $n\geq5$, Theorem \ref{thm:codim_reduction_n_geq_5} gives alternative $(2)$ unless $B\equiv0$. In the latter case $x$ is a complete totally geodesic immersion, hence an isometry onto an affine $n$-plane, as shown in \cite[Theorem 1]{ONeill_IsometricImmersion}, and the plane contains $\vec v$ because $\vec v^\perp=\mathbf H\equiv0$.
\end{proof}

\begin{proof}[Proof of Theorem \ref{thm:rigidity_result}]
    Since $c_n\leq\frac{4}{3n}$, condition \eqref{eq:main_pinching} holds with the same $\varepsilon_0$, so Theorem \ref{thm:main_codim_reduction} and, for $n\in\{2,3,4\}$, Corollary \ref{cor:codim_reduction_n_leq_4} apply. The latter proves the result when $n\in\{2,3,4\}$, so suppose $n\geq 5$.

    If $B\equiv 0$, alternative $(1)$ of Theorem \ref{thm:main_codim_reduction} shows that $x$ is an isometry onto an $n$-dimensional plane containing $\vec v$. Otherwise, by alternative $(2)$, $|\mathbf H|>0$ everywhere and $x:M\to A\cong \mathbb R^{n+1}$ is an orientable codimension $1$ translator with the same velocity $\vec v$; Moreover, it also satisfies the same pinching. Since the scalar mean curvature $H=\operatorname{tr}_g h^{n+1}=|\mathbf H|>0$ is everywhere positive and $c_n\leq\frac{1}{n-2}$, \cite[Lemma 5.1]{HuiskenSinestrari_MCFConvexAncientSolutions} implies that $M$ is uniformly $2$-convex. Consequently, \cite[Theorem 1.1]{SpruckSun_ConvexityTranslatingSolitons} applies to show that $M$ is locally strictly convex, which ensures that the second fundamental form is positive definite, and hence that $M$ is uniformly $(n-1)$-convex. Sacksteder's results \cite{Sacksteder_HypersurfacesNoNegSectionalCurvatures,Sacksteder_RigidityHypersurfaces} (see also \cite[Theorem 3.1]{Lima_Survey}) give that $x$ is an embedding and $x(M)$ is the smooth boundary $\partial\Omega$ of a convex body $\Omega\subseteq A$. So, \cite[Theorem 1.1]{BourniLangfordLynch_CollapsingAndNoncollapsingAncientMCF} applies with $\Omega_t=\Omega+t\vec v$, yielding that the inscribed radius of $\Omega_t$ is at least $H^{-1}$ at each point of $\partial\Omega_t$. Moreover, the exterior ball radius is $+\infty$ because $\Omega_t$ is convex. Thus $x$ is $1$-noncollapsed in the sense of \cite{Haslhofer_BowlSoliton} and, being complete, embedded, orientable and uniformly $2$-convex, it is an embedding onto the rotationally symmetric bowl soliton with velocity $\vec v$ by \cite[Theorem 1.2]{Haslhofer_BowlSoliton}.
\end{proof}

\appendix

\section{An algebraic inequality}\label{sec:algebraic_lemma_5_6}

We establish, in this appendix, the algebraic inequality needed in the proof of Lemma \ref{lem:gradient_estimate_5_6}.

\begin{lem}\label{lem:algebraic_lemma_5_6}
    Let $n\geq 5$ and let $\theta\in(0,1)$. Set \begin{equation}\label{eq:algebraic_constants_5_6}
        \chi \doteq \theta \frac{5n-8}{n+2}, \quad
        \tau \doteq \frac{3(n-1)}{n+2}, \quad
        \eta \doteq \frac{2\tau}{\chi}=\frac{6(n-1)}{\theta(5n-8)}, \quad
        \mu \doteq \frac{1}{2\theta(1-\theta)}.
    \end{equation}
    Suppose
    \begin{align}
        2\eta+\tau&\geq\mu+1 , \tag{C1}\label{eq:condition_C1_new}
        \\ \tau+1=\frac{4n-1}{n+2}&\geq\frac{\theta}{2(1-\theta)} ,
        \tag{C2}\label{eq:condition_C2_new}
        \\ \chi&\geq2. \tag{C3}\label{eq:condition_C3_new}
    \end{align}
    Then, for all $s>0$, $t\in[0,1)$ and $u>0$ that satisfy the constraint $s(1+u)+t=1$,
    \begin{equation*}
        \tau \frac{1+u}{u}+(\tau-1) \frac{t}{su}+\chi\left(1+u+\frac ts\right)+2t-1
        -\mu \frac{s t (1+\theta u)^2}{u}-\eta \frac{s^2(1+u)^2}{u}\ \geq\ \chi u .
    \end{equation*}
\end{lem}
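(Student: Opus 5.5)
The key observation is that the constraint $s(1+u)+t=1$ lets us remove $t$ entirely, after which the inequality becomes an elementary rational inequality in two variables. I will set $\sigma\doteq s(1+u)\in(0,1]$, so that $t=1-\sigma$ and $s=\sigma/(1+u)$. The constraint then gives $t/s=1/s-(1+u)$, and hence
\begin{equation*}
\chi\Bigl(1+u+\frac ts\Bigr)-\chi u=\frac{\chi}{s}-\chi u=\frac{\chi(1+u)}{\sigma}-\chi u\geq \frac{\chi}{\sigma}\geq\chi ,
\end{equation*}
using $\sigma\leq1$. This disposes of the $\chi u$ on the right-hand side. The remaining task is to show that
\begin{equation*}
\tau\frac{1+u}{u}+(\tau-1)\frac{(1-\sigma)(1+u)}{\sigma u}+\frac{\chi(1+u)}{\sigma}-\chi u+2(1-\sigma)-1-\mu\frac{\sigma(1-\sigma)(1+\theta u)^2}{u(1+u)}-\eta\frac{\sigma^2}{u}\geq0
\end{equation*}
for all $\sigma\in(0,1]$ and $u>0$.

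Next I will expand $(1+\theta u)^2=1+2\theta u+\theta^2u^2$ and use $(1+u)^{-1}\leq1$ together with $u/(1+u)\leq1$ wherever convenient. This bounds the $\mu$-term from above by
\begin{equation*}
\mu\sigma(1-\sigma)\Bigl(\frac1u+2\theta+\theta^2u\Bigr).
\end{equation*}
I will then collect the terms by their order in $u$: those proportional to $1/u$, the constants, and those proportional to $u$. My aim is to show that each group is nonnegative separately, for every $\sigma\in(0,1]$.

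The three groups should be handled as follows.
\begin{itemize}
\item \emph{The $1/u$ group} is $\tau+(\tau-1)\frac{1-\sigma}{\sigma}-\mu\sigma(1-\sigma)-\eta\sigma^2$. At $\sigma=1$ it reduces to $\tau-\eta=\tau(\chi-2)/\chi$, which is nonnegative exactly by \eqref{eq:condition_C3_new}. For $\sigma<1$ the positive term $(\tau-1)(1-\sigma)/\sigma$ should dominate $\mu\sigma(1-\sigma)$, and if that fails I will borrow from the surplus $\chi(1-\sigma)/\sigma$ contained in $\chi(1+u)/\sigma-\chi u$.
\item \emph{The $u$ group.} The surviving positive contribution is $\chi u(1/\sigma-1)$, plus the $u$-part of the $\tau$ term, which I will keep unsimplified. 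It has to absorb $\mu\theta^2\sigma(1-\sigma)u$. Since $\mu\theta^2=\frac{\theta}{2(1-\theta)}$ and $\sigma(1-\sigma)\leq1-\sigma\leq(1-\sigma)/\sigma$, this is precisely where \eqref{eq:condition_C2_new} enters, with $\tau+1$ as the available coefficient.
\item \emph{The constants} are $\tau+(\tau-1)\frac{1-\sigma}{\sigma}+\frac{\chi}{\sigma}+1-2\sigma-2\mu\theta\sigma(1-\sigma)$, and these I will control with \eqref{eq:condition_C1_new} together with $\chi\geq2$. This accounts for the particular combination $2\eta+\tau\geq\mu+1$.
\end{itemize}

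I expect the main obstacle to be the bookkeeping. The crude bounds $(1+u)^{-1}\leq1$ may throw away exactly the room needed, particularly in the $1/u$ group as $\sigma\to1$, where only $\tau-\eta$ survives. If a group fails on its own, I will not split it. Instead I will treat the whole expression, multiplied by $u\sigma(1+u)$, as a polynomial in $u$ with coefficients depending on $\sigma$, and verify nonnegativity of its coefficients or of the discriminant of the quadratic part in $u$ via \eqref{eq:condition_C1_new}--\eqref{eq:condition_C3_new}.
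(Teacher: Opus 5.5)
Your substitution $\sigma=s(1+u)=1-t$ and the resulting expression are correct. Sorting terms by powers of $u$ is also essentially the paper's decomposition: the paper works with $u$ times the difference of the two sides, with $t=1-\sigma$. Its $\Phi_0$ is exactly your $1/u$ coefficient, and its $u$-bracket is made of your constant and order-$u$ terms.

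\textbf{The gap: your bound on the $\mu$-term.} Using $\frac{\theta^2u}{1+u}\le\theta^2u$ replaces a quantity that stays bounded as $u\to\infty$ by one that grows linearly; the true $\mu$-term tends to $\mu\theta^2\sigma(1-\sigma)$. The terms $\tau\frac{1+u}{u}$ and $(\tau-1)\frac{(1-\sigma)(1+u)}{\sigma u}$ contribute only at orders $u^{-1}$ and $u^0$. So the whole order-$u$ part of your lower bound is
\[
u(1-\sigma)\Bigl(\frac{\chi}{\sigma}-\frac{\theta}{2(1-\theta)}\,\sigma\Bigr).
\]
There is no ``$u$-part of the $\tau$ term''. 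The coefficient available at order $u$ is $\chi$, not $\tau+1$, so (C2) cannot be used here. You would need $\chi\ge\frac{\theta}{2(1-\theta)}$, which does not follow from (C1)--(C3). It is false exactly where the lemma is applied: for $\theta=\frac56$ one has $\frac{\theta}{2(1-\theta)}=\frac52$, while $\chi=\frac{85}{42}$ if $n=5$ and $\chi=\frac{55}{24}$ if $n=6$.

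For $\frac{2(1-\theta)\chi}{\theta}<\sigma^2<1$ your lower bound therefore tends to $-\infty$ as $u\to\infty$. For instance, with $n=5$ and $\sigma=0.99$ it is negative for every $u\ge650$. No regrouping with bounded terms can repair this. Your fallback does not rescue it either. Formed from the crudely bounded expression, the polynomial has $u^3$-coefficient $(1-\sigma)\bigl(\chi-\frac{\theta}{2(1-\theta)}\sigma^2\bigr)$, which is negative near $\sigma=1$. Formed from the exact expression, it is only announced, with no indication of how (C1)--(C3) control the coefficients.

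\textbf{The missing estimate.} Use $(1+\theta u)^2\le(1+u)(1+\theta^2u)$, valid because $2\theta\le1+\theta^2$. It gives $\frac{(1+\theta u)^2}{u(1+u)}\le\frac1u+\theta^2$. Your order-$u$ group is then just $\chi u\frac{1-\sigma}{\sigma}\ge0$. The term $-\frac{\theta}{2(1-\theta)}\sigma(1-\sigma)$ moves into the constant group. By $\frac{1-\sigma}{\sigma}\ge1-\sigma$ and $\frac{\chi}{\sigma}\ge\chi$, that group is at least $\tau+\chi-1+(1-\sigma)\bigl(\tau+1-\frac{\theta}{2(1-\theta)}\sigma\bigr)\ge0$. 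This is where (C2) genuinely enters.

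\textbf{The $1/u$ group also needs a different argument.} The term $(\tau-1)\frac{1-\sigma}{\sigma}$ does not dominate $\mu\sigma(1-\sigma)$ in general; for $n=5$, $\theta=\frac56$ it fails for $\sigma>0.45$. Borrowing from $\frac{\chi}{\sigma}$ or $\chi u\frac{1-\sigma}{\sigma}$ cannot compensate a deficit of order $1/u$ as $u\to0$. The room comes instead from $\eta$, which appears only at order $1/u$; so (C1) is not about the constants. Write $\tau-\eta\sigma^2=(\tau-\eta)+\eta(1-\sigma)(1+\sigma)$. Then the $1/u$ coefficient is at least $(\tau-\eta)+(1-\sigma)\bigl(\eta(1+\sigma)+\tau-1-\mu\sigma\bigr)$. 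The bracket is affine in $\sigma$: it equals $\eta+\tau-1>0$ at $\sigma=0$, and $2\eta+\tau-1-\mu\ge0$ at $\sigma=1$ by (C1). Finally, $\tau-\eta=\tau\frac{\chi-2}{\chi}\ge0$ by (C3). With these corrections your argument becomes the paper's proof.
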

\begin{proof}
    Substituting $s=\frac{1-t}{1+u}$ and multiplying by $u>0$, the claim is equivalent to $\Phi(t,u)\geq0$ on $[0,1)\times(0,+\infty)$, where
    \begin{equation}\label{eq:Phi_definition_and_split}
        \Phi(t,u)\doteq\Phi_0(t)+u\left[\tau-1+2t+\frac{(\tau-1) t}{1-t}+\frac{\chi(1+tu)}{1-t}\right]
        -\mu t(1-t)\left[\frac{(1+\theta u)^2}{1+u}-1\right],
    \end{equation}
    and $\Phi_0(t)\doteq \tau-\eta+\eta t(2-t)+\frac{(\tau-1) t}{1-t}-\mu t(1-t)$.

    First, since $t\geq 0$, $1-t>0$ and $\tau-1\geq 0$, $\frac{(\tau-1) t}{1-t}\geq(\tau-1) t$ and
    \begin{equation*}
        \Phi_0(t)-(\tau-\eta)\geq t\left[\eta(2-t)+(\tau-1)-\mu(1-t)\right]\doteq t \ell(t).
    \end{equation*}
    Because $\ell$ is affine with $\ell(0)=2\eta+\tau-\mu-1\geq 0$ by \eqref{eq:condition_C1_new} and $\ell(1)=\eta+\tau-1>0$, $\Phi_0(t)\geq \tau -\eta$.

    Second, since $2\theta\leq1+\theta^2$ one has $(1+\theta u)^2\leq(1+u)(1+\theta^2u)$, so that $\frac{(1+\theta u)^2}{1+u}-1\leq\theta^2u$, and $\mu\theta^2=\frac{\theta}{2(1-\theta)}$. Therefore, since $\frac{(\tau-1) t}{1-t}\geq(\tau-1) t$ and $\frac{\chi(1+tu)}{1-t}\geq\chi$, \eqref{eq:Phi_definition_and_split} implies that
    \begin{equation*}
        \Phi(t,u)\geq\Phi_0(t)+u\left[\tau+\chi-1
        +t\left(1+\tau-\frac{\theta}{2(1-\theta)}(1-t)\right)\right]\geq\Phi_0(t),
    \end{equation*}
    where the last inequality uses $\tau+\chi>1$, which is easily implied by \eqref{eq:condition_C3_new}, together with \eqref{eq:condition_C2_new}.

    Finally,
    \begin{equation*}
        \Phi(t,u)\geq\Phi_0(t)\geq\tau-\eta=\tau \frac{\chi-2}{\chi}\geq0,
    \end{equation*}
    thanks to \eqref{eq:condition_C3_new}.
\end{proof}

\begin{rem}\label{rem:conditions_C123_verified}
    For $\theta=\frac56$ one has $\mu=\frac{18}{5}$, $\frac{\theta}{2(1-\theta)}=\frac52$ and $\chi=\frac{5(5n-8)}{6(n+2)}$. Hence
    \begin{equation*}
        2\eta+\tau-\mu-1=\frac{32n^2-169n+344}{5(n+2)(5n-8)},
    \end{equation*}
    whose numerator has negative discriminant, so \eqref{eq:condition_C1_new} holds for every $n\geq2$. Moreover, \eqref{eq:condition_C2_new} is equivalent to $n\geq4$, and \eqref{eq:condition_C3_new} to $13n\geq64$, that is, $n\geq5$.
\end{rem}

\section*{Data availability}
Data sharing not applicable to this article as no datasets were generated or analyzed during
the current study.

\section*{Conflict of interest statement}
On behalf of all authors, the corresponding author states that there is no conflict of interest.

\bibliographystyle{amsalpha}
\bibliography{bibliography}

\end{document}